\documentclass[11pt]{amsart}
\usepackage{amsfonts}

\setcounter{MaxMatrixCols}{10}

\newtheorem{theorem}{Theorem}
\theoremstyle{plain}

\newtheorem{corollary}{Corollary}

\newtheorem{definition}{Definition}

\newtheorem{lemma}{Lemma}

\newtheorem{proposition}{Proposition}

\numberwithin{equation}{section}
\numberwithin{theorem}{section}
\numberwithin{equation}{section}

\begin{document}
	\title[The Cauchy Problem in generalized H\"{o}lder spaces]{On the Cauchy problem for integro-differential equations with space-dependent operators in generalized H\"{o}lder classes}
	\author{Fanhui Xu}
	\email{fanhuixu@usc.edu}
	\address{Department of Mathematics, University of Southern California, Los Angeles}
	\date{September 22, 2018}
	\subjclass{60H10, 60H35, 41A25}
	\keywords{Generalized H\"{o}lder smoothness, non-local parabolic Kolmogorov equations, L\'{e}vy processes, strong solutions}
	
	\begin{abstract}
		Parabolic integro-differential Kolmogorov equations with different space-dependent operators are considered in H\"{o}lder-type spaces defined by a scalable L\'{e}vy measure. Probabilistic representations are used to prove continuity of the operator. Existence and uniqueness of the solution are established and some regularity estimates are obtained.
	\end{abstract}
	
	\maketitle
	\tableofcontents

\section{Introduction}

Let $\left( \Omega,\mathcal{F},\mathbf{P}\right)$ be a complete probability space and  $\nu$ be a L\'{e}vy measure on $\mathbf{R}^d_0=\mathbf{R}^d\backslash\{0\}$ that is of order $\alpha$, i.e.
\begin{eqnarray*}
	\alpha:=\inf\{\sigma\in\left( 0,2\right):\int_{\left\vert y\right\vert\leq 1}\left\vert y\right\vert ^{\sigma}\nu\left( dy\right)<\infty\}.
\end{eqnarray*}
 We denote by $J\left( ds, dy\right)$ a Poisson random measure on $\left( \Omega,\mathcal{F},\mathbf{P}\right)$ such that $\mathbf{E}\left[ J\left( ds, dy\right)\right]= \nu\left( dy\right)ds$, and denote by $Z_t^{\nu}$ the L\'{e}vy process
\begin{eqnarray}\label{lev}
\qquad Z_t^{\nu}=\int_0^t\int_{\mathbf{R}^d_0} \chi_{\alpha}\left( y\right) y \tilde{J}\left( ds, dy\right)+\int_0^t\int_{\mathbf{R}^d_0} \left( 1-\chi_{\alpha}\left( y\right)\right) y J\left( ds, dy\right).
\end{eqnarray}
Here $\chi_{\alpha}\left( y\right):=1_{\alpha\in\left(1,2\right)}+1_{\alpha=1}1_{\left\vert y\right\vert\leq 1}$, and
\begin{eqnarray*}
	\tilde{J}\left( ds, dy\right):= J\left( ds, dy\right)-\nu\left( dy\right)ds
\end{eqnarray*}
is the compensated Poisson measure.

This work is a continuation of \cite{mf}, in which we studied the Cauchy problem for the following parabolic-type Kolmogorov equations in generalized H\"{o}lder spaces $\tilde{C}^{\beta}\left( \mathbf{R}^{d}\right)$ endowed with norms $|\cdot |_{\beta}$ (see Section 2.2):
\begin{eqnarray}\label{eq1}
\partial_t u\left( t,x\right)&=&L^{\nu}u\left( t,x\right)-\lambda u\left( t,x\right)+f\left( t,x\right), \lambda\geq 0,\\
u\left( 0,x\right)&=& 0,\quad\left( t,x\right)\in \left[0,T\right]\times\mathbf{R}^d,\nonumber
\end{eqnarray}
where $L^{\nu}$ is the infinitesimal generator of $Z_t^{\nu}$. Namely, for any $\varphi\in  C_0^{\infty}\left(\mathbf{R}^d\right)$, 
\begin{eqnarray}\label{op}
L^{\nu}\varphi\left( x\right):=\int\left[ \varphi\left( x+y\right)-\varphi\left( x\right)-\chi_{\alpha}\left( y\right)y\cdot \nabla \varphi\left( x\right)\right]\nu\left(d y\right).
\end{eqnarray}
A notion of \textbf{scaling functions} was utilized in \cite{mf} to include some recent popular models of $\nu$ (cf. \cite{kim1,kim2,zh}).
\begin{definition}
	A continuous function $w: \left( 0,\infty\right)\rightarrow \left( 0,\infty\right)$ is called a scaling function if
	\begin{eqnarray*}
		\lim_{r\rightarrow 0}w\left( r\right)=0, \quad\lim_{R\rightarrow \infty}w\left( R\right)=\infty
	\end{eqnarray*}
	and if there is a nondecreasing continuous function $l\left(\varepsilon\right),\varepsilon>0$ such that $\lim_{\varepsilon\rightarrow 0}l\left(\varepsilon\right)=0$ and 
	\begin{equation}\label{scale}
	w\left( \varepsilon r\right)\leq l\left(\varepsilon\right)w\left( r\right), \quad\forall r,\varepsilon>0.
	\end{equation}
$l$ is called the scaling factor of $w$.
\end{definition}

For any L\'{e}vy measure $\nu$, any $R>0$ and $\forall B\in\mathcal{B}\left( \mathbf{R}^d_0\right)$, 
\begin{eqnarray}
\nu_R \left(B\right)&:=&\int 1_B\left( y/R\right)\nu\left( dy\right),\label{mea}\\ 
\tilde{\nu}_R\left( dy\right) &:=&w\left( R\right) \nu_R\left( dy\right).\label{meas}
\end{eqnarray}
We can always normalize $w$ by a constant so that $w\left( 1\right)=1$ and $\tilde{\nu}_1\left( dy\right)=\nu\left( dy\right)$. It was imposed in \cite{mf} for $\nu$:\\
\noindent \textbf{A(w,l)} (i) (Non-degeneracy) Suppose $\tilde{\nu}_R\left( dy\right)\geq \mu^0 \left( dy\right), R>0$ for some L\'{e}vy measure $\mu^0$ that is supported on the unit ball $B\left( 0\right)$, with $\mu_0$ satisfying 
\begin{equation}\label{mu0}
\int \left\vert y\right\vert^2 \mu^0\left( dy\right) +\int \left\vert\xi\right\vert^4\left[1+\upsilon\left( \xi\right)\right]^{d+3}\exp\{-\zeta^0\left( \xi\right) \}d\xi<\infty,
\end{equation} 
where
\begin{eqnarray*}
	\upsilon\left( \xi\right)&=&\int \chi_{\alpha}\left( y\right) \left\vert y\right\vert \left[ \left( \left\vert \xi\right\vert\left\vert y\right\vert\right)\wedge 1\right]\mu^0\left(dy\right),\\
	\quad\zeta^0\left( \xi\right)&=&\int \left[ 1-\cos \left( 2\pi\xi\cdot y\right)\right]\mu^0\left( dy\right).
\end{eqnarray*}
In addition, for all $\xi\in S_{d-1}=\{\xi\in\mathbf{R}^d:\left\vert\xi\right\vert=1 \}$, there is a constant $c_1>0$, such that
\begin{equation*}
\int_{\left\vert y\right\vert\leq 1} \left\vert \xi\cdot y\right\vert^2 \mu^0\left( dy\right)\geq c_0.
\end{equation*} 
(ii) (Symmetry) If $\alpha=1$, then 
\begin{equation}\label{alpha1}
\int_{r<\left\vert y\right\vert <R} y\nu\left( dy\right)=0 \quad\text{ for all } 0<r<R<\infty.
\end{equation}
(iii) (Scalability) There exist constants $\alpha_1\geq\alpha_2$ such that $\alpha_1,\alpha_2\in\left( 0,1\right)$ if $\alpha\in\left( 0,1\right)$, $\alpha_1,\alpha_2\in \left( 1,2\right]$ if $\alpha\in\left( 1,2\right)$, $\alpha_1\in\left( 1,2\right]$ and $\alpha_2\in\left[0,1\right)$ if $\alpha=1$, and 
\begin{equation*}
\int_{\left\vert y\right\vert\leq 1} \left\vert y\right\vert^{\alpha_1}\tilde{\nu}_R\left( dy\right)+\int_{\left\vert y\right\vert> 1} \left\vert y\right\vert^{\alpha_2}\tilde{\nu}_R\left( dy\right)\leq N_0.
\end{equation*}
The $N_0>0$ above is uniform with respect to $R$. \\
\noindent(iv) (Scalability) Suppose $\varsigma\left( r\right):=\nu\left( \left\vert y\right\vert >r\right),r>0$ is continuous in $r$ and
\begin{eqnarray*}
	\int_0^{1}s \varsigma\left( rs\right)\varsigma\left( r\right)^{-1} ds\leq C_0
\end{eqnarray*}
for some positive $C_0$ independent of $r$.

Under \textbf{A(w,l)}, $Z_t^{\nu}$ possesses a smooth density function whose regularity estimates were derived in \cite{cr}. Moreover,  $Z_t^{\nu}$ is approximately distributed as  $\frac{1}{R}Z_{w\left( R\right)t}^{\nu}, R>0$. This property gives a uniform description of L\'{e}vy measures that were considered in \cite{zh}, \cite{kim1} and \cite{kim2}. In \cite{zh}, $\nu$ is assumed to be confined by two $\alpha$-stable measures of the same order, namely, 
\begin{eqnarray}\label{as1}
&&\int_{S_{d-1}}\int_{0}^{\infty }1_{B}\left(
rw\right) \frac{dr}{r^{1+\alpha }}\Sigma _{1}\left( dw\right) \nonumber\\
&\leq &\nu \left( B\right) \leq \int_{S_{d-1}}\int_{0}^{\infty }1_{B}\left( rw\right) \frac{dr}{%
	r^{1+\alpha }}\Sigma _{2}\left( dw\right)
\end{eqnarray}
for any Borel measurable set $B$. They also assumed $\Sigma_1$ and $\Sigma_2$ are two finite measures defined on the unit sphere and $\Sigma_1$ is nondegenerate. In this situation, $\nu$ satisfies \textbf{A(w,l)} with $w\left( r\right)=l\left( r\right)=r^{\alpha},~r>0$. Another interesting class of L\'{e}vy measures was investigated in \cite{kim1} and \cite{kim2}, where
\begin{equation}\label{mod}
\nu\left( B\right)=\int_0^{\infty}\int_{\left\vert w\right\vert=1} 1_B\left( rw\right)a\left( r,w\right)j\left(r\right)r^{d-1}\Sigma\left( dw\right)dr, \quad \forall B\in\mathcal{B}\left( \mathbf{R}^d_0\right),
\end{equation}
$\Sigma\left( dw\right)$ is a finite measure on the unit sphere, and
\begin{equation*}
j\left( r\right)=\int_0^{\infty}\left( 4\pi t\right)^{-d/2}\exp\left( -\frac{r^2}{4t}\right)\Lambda\left( dt\right), r>0,
\end{equation*} 
with $\Lambda\left( dt\right)$ being a measure on $\left(0,\infty\right)$ such that $\int_0^{\infty} \left( 1\wedge t\right) \Lambda\left( dt\right)<\infty$. Let $\phi\left(r\right)=\int_0^{\infty}\left( 1-e^{-rt}\right)\Lambda\left( dt\right), r\geq 0$ be the associated Bernstein function. They imposed\\
\textbf{H. }there is a function $\rho_0\left(w\right)$ defined on the unit sphere such that $\rho_0\left(w\right)\leq a\left( r,w\right)\leq 1, \forall r>0$, and for all $\left\vert \xi\right\vert=1$,
\begin{equation*}
\int_{S^{d-1}}\left\vert \xi\cdot w\right\vert^2\rho_0\left( w\right) \geq c>0.
\end{equation*}
\textbf{G. }(i) There is $C>1$ such that
\begin{equation*}
\frac{1}{C}\phi\left( r^{-2}\right)r^{-d}\leq j\left(r\right)\leq C\phi\left( r^{-2}\right)r^{-d}.
\end{equation*}
(ii) There are $0<\sigma_1\leq \sigma_2<1$ and $C>0$ such that for all $0<r\leq R$
\begin{equation*}
C^{-1}\left( \frac{R}{r}\right)^{\sigma_1}\leq \frac{\phi\left( R\right)}{\phi\left( r\right)}\leq C\left( \frac{R}{r}\right)^{\sigma_2}.
\end{equation*}
It can be verified that \textbf{H} and \textbf{G} produce L\'{e}vy measures of \textbf{A(w,l)}-type with $w\left( r\right)=j\left( r\right)^{-1}r^{-d}, r>0$, and
\begin{equation*}
l\left( r\right)=\left\{\begin{array}{ll}
Cr^{2\sigma_1} & \mbox{ if } r\leq 1,\\
Cr^{2\sigma_2} & \mbox{ if } r> 1
\end{array}\right.
\end{equation*}
for some $C>0$. (See \cite{kim1,kim2,cr,zh} for details and examples.)

Write $H_T=\left[0,T\right]\times\mathbf{R}^d$. In this note, we consider the following parabolic integro-differential equation:
\begin{eqnarray}\label{eq3}
\partial_t u\left( t,x\right)&=&\mathcal{L}u\left( t,x\right)-\lambda u\left( t,x\right)+f\left( t,x\right), ~\lambda\geq 0,\\
u\left( 0,x\right)&=& 0,~\left( t,x\right)\in H_T,\nonumber
\end{eqnarray}
where $\mathcal{L}=\mathcal{A}+\mathcal{Q}$ or $\mathcal{L}=\mathcal{G}+\mathcal{Q}$, and for any function $\varphi\in  C_0^{\infty}\left(\mathbf{R}^d\right)$,
\begin{eqnarray}
	\mathcal{A}\varphi\left( x\right)&:=&\int\left[\varphi\left( x+y\right)-\varphi\left( x\right)-\chi_{\alpha}\left( y\right)y\cdot \nabla \varphi\left( x\right) \right]\rho\left(t,x,y\right)\nu\left(d y\right),\notag\\
	\mathcal{G}\varphi\left( x\right)&:=&\int\left[\varphi\left( x+G\left( x\right)y\right)-\varphi\left( x\right)-\chi_{\alpha}\left( y\right)G\left( x\right)y\cdot \nabla \varphi\left( x\right) \right]\nu\left(d y\right),\notag\\
	\mathcal{Q}\varphi\left( x\right)&:=& 1_{\alpha\in\left( 1,2\right)}b\left( t,x\right)\cdot\nabla \varphi\left( x\right)+p\left( t,x\right)\varphi\left( x\right) +\int_{ \mathbf{R}^d_0}\lbrack \varphi\left( x+q\left( t,x,y\right)\right)\notag\\
	&&-\varphi\left( x\right)-\nabla \varphi\left( x\right)\cdot q\left( t,x,y\right)1_{\alpha\in\left( 1,2\right)}1_{\left\vert y\right\vert\leq 1}\rbrack\varrho\left(t,x,y\right) \nu_2\left( dy\right).\label{low}
\end{eqnarray}

We assume for the underlying L\'{e}vy measure $\nu$:\\
\noindent\textbf{$\mathbf{\tilde{A}}(w,l,\gamma)$}. (i) $\nu$ satisfies \textbf{A(w,l)}.\\
\noindent(ii) There is $\varepsilon\in\left(0,1\right)$ such that for any $\beta'\in\left(0,\beta+\varepsilon\right)$, 
\begin{eqnarray*}
	\int_0^1 l\left( t\right)^{\beta'}\frac{dt}{t}+\int_1^{\infty} l\left( t\right)^{\beta'}\frac{dt}{t^2}+1_{\alpha\in\left[1,2\right) }\int_0^1 l\left( t\right)^{1+\beta'}\frac{dt}{t^2}<\infty.
\end{eqnarray*}
(iii) Set $\gamma(t)=\inf\{s>0:l(s)\geq t\}$ for $t>0$. There exist $0<\delta<\min\left( \frac{1}{2},\beta\right)$ and $0<\delta'<\min\left( \frac{1}{2},\varepsilon\right)$ for the $\varepsilon$ in (ii) such that 
\begin{eqnarray*}
	1_{\alpha \in \left(0,1\right)}\int_1^{\infty}t^{\delta}\gamma\left( t\right)^{-1}dt&<&\infty,\\
	1_{\alpha=1}\left(\int_0^{1}t^{\delta}\gamma\left( t\right)^{-1}dt+\int_1^{\infty}t^{-\delta'}\gamma\left( t\right)^{-1}+t^{\delta}\gamma\left( t\right)^{-2}dt\right)&<&\infty,\\
	1_{\alpha\in \left(1,2\right)}\left(\int_0^{1}t^{-\delta}\gamma\left( t\right)^{-1}dt+\int_1^{\infty}t^{\delta}\gamma\left( t\right)^{-2}+t^{-\frac{1}{2}+\delta}\gamma\left( t\right)^{-1}dt\right)&<&\infty.
\end{eqnarray*}

Suppose the kernel function $\rho$ satisfies\\
\noindent\textbf{H($K,\beta$)}. (i) There is $K>0$ so that for $\forall t\in\left[0,T\right]$,
\begin{eqnarray}
\left\vert \rho\left( t,x, y\right)\right\vert &\leq& K, \quad\forall x, y\in\mathbf{R}^d,\label{bound}\\
\quad\left\vert \rho\left(t, x_1, y\right)-\rho\left(t, x_2, y\right)\right\vert&\leq& Kw\left(\left\vert x_1- x_2\right\vert\right)^{\beta}, \forall y\in\mathbf{R}^d.\qquad\label{bound2}
\end{eqnarray}
(ii) If $\alpha=1$, then for $\forall x\in\mathbf{R}^d, \forall r\in\left( 0,1\right),\forall t\in\left[0,T\right]$,
\begin{eqnarray*}
\int_{r<\left\vert y\right\vert< 1}y\rho\left(t, x,y\right)\nu\left( dy\right)=0.\label{sym}
\end{eqnarray*}

We assume for the main part $\mathcal{G}$:\\
\noindent\textbf{G($c_0, K,\beta$)}. (i) $G\left( z\right), z\in\mathbf{R}^d$ is an invertible and uniform continuous $d\times d$-matrix, and $G\left( z\right)\neq G\left( z'\right)$ if $z\neq z'$.\\
(ii) $\left\vert \det G\left( z\right)\right\vert\geq c_0, \left\Vert G\left( z\right)\right\Vert\leq K, \forall z\in\mathbf{R}^d$ for some $c_0,K>0$.\\
(iii) For the same $K$, $g\left(z,z'\right)\leq K w\left( \left\vert z-z'\right\vert\right)^{\beta}, \forall z,z'\in\mathbf{R}^d$, where $\bar{G}_{z,z'}:=\left\Vert G\left( z\right)-G\left( z'\right)\right\Vert$ and
\begin{eqnarray*}
g\left(z,z'\right)
= \left\{ 
\begin{array}{cc}
w\left(\bar{G}_{z,z'}^{-1}\right)^{-1} & \text{if } \alpha\in\left( 0,1\right) , \\ 
w\left( \bar{G}_{z,z'}^{-1}\right)^{-1}w\left( \bar{G}_{z,z'}\right)^{-\delta'} \vee \bar{G}_{z,z'} & \text{if } \alpha=1, \\ 
\bar{G}_{z,z'} & \text{if } \alpha\in\left( 1,2\right) . 
\end{array}%
\right. 
\end{eqnarray*}

For the same $w,l,K,\beta$, we assume the lower order part $\mathcal{Q}$ satisfies:\\
\noindent\textbf{B($K,\beta$)}. 
(i) $\left\vert b\left( t,\cdot\right)\right\vert_{\beta}+\left\vert p\left( t,\cdot\right)\right\vert_{\beta}+\left\vert \varrho\left( t,\cdot,y\right)\right\vert_{\beta}\leq K,\forall y\in\mathbf{R}^d_0,\forall t\in\left[0,T\right]$.\\
(ii) For all $\alpha\in\left(0,2\right),z'\in\mathbf{R}^d,\forall t\in\left[0,T\right]$, $q\left( t,\cdot,y\right)\neq 0$ if $y\neq 0$. Besides,
\begin{eqnarray*}
	\lim_{\varepsilon\to 0}\sup_{t,z'}\int_{\left\vert q\left( t,z',y\right)\right\vert\leq \varepsilon}\left( w\left( \left\vert q\left( t,z',y\right)\right\vert\right)+1_{\alpha=1}\left\vert q\left( t,z',y\right)\right\vert\right)\nu_2\left( dy\right)=0.
\end{eqnarray*}
(iii) For all $\alpha\in\left(0,1\right],z'\in\mathbf{R}^d,\forall t\in\left[0,T\right]$,
\begin{eqnarray*}
\int_{ \mathbf{R}^d_0 }1_{\alpha<1}\left(w\left( \left\vert q\left( t,z',y\right)\right\vert\right)\wedge 1\right)+1_{\alpha=1}\left(\left\vert q\left( t,z',y\right)\right\vert\wedge 1\right)\nu_2\left( dy\right)&\leq& K.
\end{eqnarray*}
(iv) For all $\alpha\in\left(1,2\right),z'\in\mathbf{R}^d,\forall t\in\left[0,T\right]$,
\begin{eqnarray*}
\int_{ \left\vert y\right\vert\leq 1}w\left( \left\vert q\left( t,z',y\right)\right\vert\right) \nu_2\left( dy\right)+\int_{ \left\vert y\right\vert> 1}\left(\left\vert q\left( t,z',y\right)\right\vert\wedge 1\right)\nu_2\left( dy\right)&\leq& K.
\end{eqnarray*}
(v) For all $z',h\in\mathbf{R}^d,\forall t\in\left[0,T\right]$, $\alpha\in\left( 1,2\right)$,
\begin{eqnarray*}
	\int_{\left\vert y\right\vert\leq 1}w\left(  \left\vert q\left( t,x+h,y\right)\right\vert\right)^{\beta}\left\vert q\left( t,x+h,y\right)-q\left( t,x,y\right)\right\vert\nu_2\left( dy\right)&\leq& K w\left( \left\vert h\right\vert\right)^{\beta},\\
	\int_{\left\vert y\right\vert\leq 1}w\left(  \left\vert q\left( t,x+h,y\right)-q\left( t,x,y\right)\right\vert\right)^{\beta}\left\vert q\left( t,x,y\right)\right\vert\nu_2\left( dy\right)&\leq& K w\left( \left\vert h\right\vert\right)^{\beta},\\
	\int_{\left\vert y\right\vert> 1}\left( \left\vert q\left( t,z'+h,y\right)-q\left( t,z',y\right)\right\vert\wedge 1\right)\nu_2\left( dy\right)&\leq& K w\left( \left\vert h\right\vert\right)^{\beta}.
\end{eqnarray*}
If $\alpha\in\left( 0,1\right)$,
\begin{eqnarray*}
	\int_{ \mathbf{R}^d_0 }\left(w\left( \left\vert q\left( t,z'+h,y\right)-q\left( t,z',y\right)\right\vert\right)\wedge 1\right)\nu_2\left( dy\right)&\leq& K w\left( \left\vert h\right\vert\right)^{\beta}.
\end{eqnarray*}
And if $\alpha=1$,
\begin{eqnarray*}
	\int_{ \mathbf{R}^d_0 }\left( \left\vert q\left( t,z'+h,y\right)-q\left( t,z',y\right)\right\vert\wedge 1\right)\nu_2\left( dy\right)&\leq& K w\left( \left\vert h\right\vert\right)^{\beta}.
\end{eqnarray*}

The main conclusion of this paper is 
\begin{theorem}\label{main}
	Let \textbf{$\mathbf{\tilde{A}}(w,l,\gamma)$}, \textbf{B($K,\beta$)} and \textbf{H($K,\beta$)} (resp. \textbf{G($c_0, K,\beta$)}) hold. If $f\left( t,x\right)\in\tilde{C}^{\beta}\left(H_T\right),\beta\in\left(0,\frac{1}{\alpha}\right)$, then there is a unique solution $u\left( t,x\right)\in \tilde{C}^{1+\beta}\left(H_T\right)$ to $\eqref{eq3}$ with $\mathcal{L}=\mathcal{A}+\mathcal{Q}$ (resp. $\mathcal{L}=\mathcal{G}+\mathcal{Q}$). Moreover, there exists a constant $C$ depending on $c_0,c_1,N_1,K,\beta, d, T$, $\mu,\nu$ such that
	\begin{eqnarray*}
		\left\vert u\right\vert_{\beta}&\leq& C\left( \lambda^{-1}\wedge T\right) \left\vert f\right\vert_{\beta},\label{est7}\\
		\left\vert u\right\vert_{1+\beta}&\leq& C\left\vert f\right\vert_{\beta}.\label{est8}
	\end{eqnarray*}
	And there is a constant $C$ depending on $c_0, c_1,N_1,K,\kappa,\beta,d, T,\mu,\nu$ such that for all $0\leq s<t\leq T$, $\kappa\in\left[ 0,1\right]$ and $\kappa+\beta>1$, 
	\begin{equation*}\label{est9}
	\left\vert u\left(t,\cdot\right)-u\left(s,\cdot\right)\right\vert_{\kappa+\beta}\leq C\left\vert t-s\right\vert^{1-\kappa}\left\vert f\right\vert_{\beta}.
	\end{equation*}
\end{theorem}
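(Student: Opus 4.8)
\noindent\emph{Sketch of the argument.} The plan is to treat \eqref{eq3} as a perturbation of the constant-coefficient model equation \eqref{eq1}, whose unique solvability and Schauder estimates in the generalized H\"{o}lder scale $\tilde{C}^{1+\beta}(H_T)\subset\tilde{C}^{\beta}(H_T)$ were established in \cite{mf}, and to pass from the model generator $L^{\nu}$ to the variable-coefficient operator $\mathcal{L}$ by freezing coefficients together with the method of continuity; the interpolation inequalities for this scale (as in \cite{mf}) and the smoothness and decay bounds for the transition density of $Z_t^{\nu}$ from \cite{cr} are used throughout. The first step is the \emph{continuity of the operator}: under \textbf{$\mathbf{\tilde{A}}(w,l,\gamma)$}, \textbf{H($K,\beta$)} (resp. \textbf{G($c_0, K,\beta$)}) and \textbf{B($K,\beta$)}, each of $\mathcal{A},\mathcal{G},\mathcal{Q}$ maps $\tilde{C}^{1+\beta}(H_T)$ boundedly into $\tilde{C}^{\beta}(H_T)$, with $\mathcal{Q}$ moreover a relatively compact, lower-order perturbation thanks to \textbf{B($K,\beta$)}(ii). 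This is where the probabilistic representation enters: writing $P_t\varphi(x)=\mathbf{E}[\varphi(x+Z_t^{\nu})]$ and using that $Z_t^{\nu}$ has a smooth density with the derivative bounds of \cite{cr}, one represents $\mathcal{A}\varphi$ (and, after the linear substitution $y\mapsto G(x)y$, which turns $\mathcal{G}$ into the model operator for the push-forward L\'{e}vy measure $G(x)_{*}\nu$, also $\mathcal{G}\varphi$) through the semigroup and integrates by parts so as to trade a derivative of $\varphi$ for a derivative of the density. The H\"{o}lder bound \eqref{bound2} on $\rho$ and \textbf{G($c_0, K,\beta$)}(iii) on $G$ are precisely what control the $x$-increments, \textbf{$\mathbf{\tilde{A}}(w,l,\gamma)$}(ii)--(iii) makes the attendant time integrals converge, and the $\alpha=1$ symmetry conditions in \textbf{H($K,\beta$)} and \textbf{A(w,l)} cancel the only conditionally integrable (first-order) contribution.

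Next I would freeze coefficients. Fixing $(t_0,x_0)\in H_T$, put $\mathcal{A}^{x_0}\varphi(x)=\int[\varphi(x+y)-\varphi(x)-\chi_{\alpha}(y)y\cdot\nabla\varphi(x)]\rho(t_0,x_0,y)\nu(dy)$, the model operator attached to the L\'{e}vy measure $\rho(t_0,x_0,\cdot)\nu$, and analogously $\mathcal{G}^{x_0}$, attached to $G(x_0)_{*}\nu$. Using the bounds on $\rho$ in \textbf{H($K,\beta$)}(i) (resp. the invertibility and bounds on $G$ in \textbf{G($c_0, K,\beta$)}(i)--(ii)) together with the non-degeneracy in \textbf{A(w,l)}(i), one checks that this measure again satisfies \textbf{A(w,l)} and the finiteness conditions of \textbf{$\mathbf{\tilde{A}}(w,l,\gamma)$} with all constants \emph{uniform in $(t_0,x_0)$}; hence \cite{mf} provides, uniformly in $(t_0,x_0)$, a solution operator and the a priori bound $|v|_{1+\beta}\leq C\,|(\partial_t-\mathcal{A}^{x_0}+\lambda)v|_{\beta}$ for the frozen Cauchy problem. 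Combining this with a partition of unity of scale $r$ on $\mathbf{R}^d$, the splitting $\mathcal{A}u=\mathcal{A}^{x_0}u+(\mathcal{A}-\mathcal{A}^{x_0})u$ whose second term is small on the ball of radius $r$ about $x_0$ by \eqref{bound2} (and the analogous splitting for $\mathcal{G}$ via \textbf{G($c_0, K,\beta$)}(iii)), the continuity of $\mathcal{A},\mathcal{G},\mathcal{Q}$ just obtained, and the interpolation inequalities, one derives, for every $u\in\tilde{C}^{1+\beta}(H_T)$ solving \eqref{eq3}, the estimates
\begin{equation*}
|u|_{1+\beta}\leq C\bigl(|f|_{\beta}+|u|_{0}\bigr),\qquad |u|_{\beta}\leq C(\lambda^{-1}\wedge T)\,|f|_{\beta},
\end{equation*}
the first from the frozen/localization argument, the second from a maximum-principle estimate exploiting the $-\lambda u$ term; since $\lambda^{-1}\wedge T\leq T$, the second absorbs $|u|_{0}$ and upgrades the first to $|u|_{1+\beta}\leq C|f|_{\beta}$.

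Existence follows by the method of continuity. For $\mathcal{L}=\mathcal{A}+\mathcal{Q}$ one uses the family $\mathcal{L}_{\theta}=(1-\theta)L^{\nu}+\theta\mathcal{L}$, $\theta\in[0,1]$ (whose principal kernel $1+\theta(\rho-1)$ again obeys \textbf{H($K,\beta$)} with constants uniform in $\theta$); for $\mathcal{L}=\mathcal{G}+\mathcal{Q}$ one deforms the matrix field $G$ within $GL(d,\mathbf{R})$ to a constant matrix, which is possible because $\det G$ has constant sign on the connected set $\mathbf{R}^d$, and then adds $\theta\mathcal{Q}$. In either case the a priori estimate holds uniformly in $\theta$; solvability holds at $\theta=0$ by \cite{mf}, hence at $\theta=1$. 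Uniqueness is immediate from $|u|_{\beta}\leq C(\lambda^{-1}\wedge T)|f|_{\beta}$ applied to the difference of two solutions with $f\equiv 0$. Finally, for the time-regularity estimate one writes $u(t,\cdot)-u(s,\cdot)=\int_s^t(\mathcal{L}u-\lambda u+f)(\tau,\cdot)\,d\tau$; by the continuity of $\mathcal{L}$ and $|u|_{1+\beta}\leq C|f|_{\beta}$ the integrand is bounded in $\tilde{C}^{\beta}$ by $C|f|_{\beta}$, so $|u(t,\cdot)-u(s,\cdot)|_{\beta}\leq C|t-s|\,|f|_{\beta}$, while also $|u(t,\cdot)-u(s,\cdot)|_{1+\beta}\leq 2C|f|_{\beta}$; interpolating in the scale with weight $\kappa$ (note $\kappa+\beta=(1-\kappa)\beta+\kappa(1+\beta)$) yields $|u(t,\cdot)-u(s,\cdot)|_{\kappa+\beta}\leq C|t-s|^{1-\kappa}|f|_{\beta}$.

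The main obstacle is the first step: the continuity of $\mathcal{A}$ and, especially, $\mathcal{G}$ on the generalized H\"{o}lder scale. Since the modulus $w$ is not a power, every increment estimate must be carried through the scaling factor $l$ and its generalized inverse $\gamma$ (this is exactly what \textbf{$\mathbf{\tilde{A}}(w,l,\gamma)$}(ii)--(iii) are tailored for), and the $x$-dependence of $G$ enters nonlinearly through the function $g(z,z')$ of \textbf{G($c_0, K,\beta$)}(iii); the borderline case $\alpha=1$, where first-order terms are only conditionally integrable, requires the symmetry hypotheses and careful tracking of the cutoffs $\chi_{\alpha}$ (in particular under $y\mapsto G(x)y$, which does not map balls to balls). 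A secondary technical point is to verify that the frozen operators $\mathcal{A}^{x_0},\mathcal{G}^{x_0}$ and the interpolated operators $\mathcal{L}_{\theta}$ inherit \textbf{$\mathbf{\tilde{A}}(w,l,\gamma)$} with constants independent of the freezing point and of $\theta$.
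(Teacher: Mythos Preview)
Your proposal follows essentially the same strategy as the paper: continuity of $\mathcal{A},\mathcal{G},\mathcal{Q}$ on the scale $\tilde C^{1+\beta}\to\tilde C^{\beta}$ via the probabilistic representations and the density estimates of \cite{cr}; a priori bounds by freezing coefficients, localizing with cut-offs $\eta_{m,z}$, and invoking the constant-coefficient theory of \cite{mf}; existence via the method of continuity; uniqueness from the estimates. The paper's Section~3 is exactly your ``first step'', and Section~4.2 is your freezing/partition-of-unity argument.

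A few points where the paper proceeds differently. For the $\mathcal{G}$ case it does \emph{not} try to verify that the push-forward $G(x_0)_{*}\nu$ satisfies \textbf{A(w,l)}; instead it proves a dedicated constant-coefficient result (Theorem~\ref{thm1}) by the substitution $x\mapsto Gx$ together with the norm-comparison Lemma~\ref{lem1}, which is cleaner than checking the assumption list for the push-forward. For the method of continuity the paper uses the single linear homotopy $\mathcal{L}_\theta=\theta\mathcal{L}+(1-\theta)L^{\nu}$ in \emph{both} cases, rather than your $GL(d)$-deformation of $G$; this works because the a priori bound is proved for the full variable-coefficient $\mathcal{L}$, so the convex combination is already covered. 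For the time regularity the paper re-runs the localization on $u(t,\cdot)-u(s,\cdot)$ using the built-in time-H\"older bounds \eqref{eest4}, \eqref{est4}; your interpolation between $|u(t)-u(s)|_\beta\le C|t-s||f|_\beta$ and $|u(t)-u(s)|_{1+\beta}\le C|f|_\beta$ is a legitimate shortcut. Finally, the paper does not use a maximum principle for $|u|_\beta\le C(\lambda^{-1}\wedge T)|f|_\beta$; it extracts $|u|_0$ (and then $|u|_\beta$) from the localized equation via the $(\lambda^{-1}\wedge T)$ factor already present in \eqref{eest6}, \eqref{est6}, choosing $\lambda$ large to close the loop and then removing the restriction by the substitution $v=e^{(\lambda-\lambda_0)t}u$. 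One caution on your plan: verifying that $\rho(t_0,x_0,\cdot)\nu$ satisfies the non-degeneracy in \textbf{A(w,l)}(i) would require a pointwise lower bound on $\rho$ that \textbf{H($K,\beta$)} does not state; the paper simply invokes Theorem~\ref{thm2} at that step without spelling this out, so the issue is tacit there as well.
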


Due to generality of the measure $\nu$ we are considering, the L\'{e}vy symbol $\psi^{\nu}\left( \xi\right), \xi\in\mathbf{R}^d$ of the process $Z_t^{\nu}$ is generally not smooth in $\xi$. This was already an obstacle for applying the standard Fourier multiplier theorem to solutions of equations with space-independent coefficients, and it continues to be a difficulty in this work. Thus, probabilistic representations are used instead and continuity of the operators are proved in that approach. Then we apply continuation of parameters, which was also used in \cite{mp} and \cite{mp2}, to show well-posedness of the Cauchy problem. In \cite{mp2}, a parabolic-type Kolmogorov equation with an operator $\mathcal{L}=\mathcal{A}+\mathcal{Q}$ was considered in the standard H\"{o}lder-Zygmund space, where $\mathcal{Q}$ is the lower order part and the principal part
\begin{eqnarray*}
	\mathcal{A}u\left(t, x\right):=\int\left[ u\left(t, x+y\right)-u\left( t,x\right)-\chi_{\alpha}\left( y\right)y\cdot \nabla u\left( t,x\right)\right]\rho\left( t,x,y\right)\frac{dy}{\left\vert y\right\vert^{d+\alpha}}.
\end{eqnarray*}
With more flavor of probability, in \cite{mp} a stochastic parabolic integro-differential equation with operators
\begin{eqnarray*}
	\mathcal{L}u\left(t, x\right)&:=&\int\left[ u\left(t, x+y\right)-u\left( t,x\right)-1_{\alpha\geq 1}1_{\left\vert y\right\vert\leq 1}y\cdot \nabla u\left( t,x\right)\right]\nu\left( t,x,dy\right)\\
	&+&1_{\alpha=2}a^{ij}\left(t, x\right)\partial^2_{ij}u\left(t, x\right)+1_{\alpha\geq 1}\tilde{b}^{i}\left(t, x\right)\partial_i u\left(t, x\right)+l\left(t, x\right) u\left(t, x\right)
\end{eqnarray*}
was studied in H\"{o}lder spaces. A deterministic model with a similar operator was addressed in the little H\"{o}lder-Zygmund spaces in \cite{mp3}. Besides, the Cauchy problem for a second order linear SPDE was considered in \cite{rm} and \cite{br} in standard H\"{o}lder classes.

The outline of this note is as follows. 

In section 2, notation is introduced. Definitions of function spaces and results on norm equivalence from \cite{mf} are briefly mentioned at the convenience of readers. In section 3, we show continuity of the operators by using probability representations. In section 4, we derive some a priori estimates and prove the main theorem by applying continuation of parameters. Other auxiliary results are collected in the Appendix section.

\section{Notation and Function Spaces}
\subsection{Basic Notation}
We use $\mathbf{N}$ for the set of nonnegative integers, $\mathbf{N}_{+}$ for $\mathbf{N}\backslash\{0\}$, and $\Re$ for the real part of a complex-valued quantity.

For a function $u=u\left( t,x\right)$ on $H_T=[0,T]\times\mathbf{R}^d$, $\partial _{t}u:=\partial u/\partial t$, $\partial
_{i}u:=\partial u/\partial x_{i}$, $\partial _{ij}^{2}u:=\partial
^{2}u/\partial x_{i}x_{j}$. The gradient of $u$ with respect to $x$ is denoted by $\nabla u$, and $
D^{|\gamma |}u:=\partial ^{|\gamma |}u/\partial x_{1}^{\gamma _{1}}\ldots
\partial x_{d}^{\gamma _{d}}$, where $\gamma =\left( \gamma _{1},\ldots
,\gamma _{d}\right) \in \mathbf{N}^{d}$ is a multi-index.

As usual, $C_b^{\infty}\left( \mathbf{R}^d\right)$ denotes the set of infinitely differentiable functions on $\mathbf{R}^d$ whose derivative of arbitrary order is finite, $\mathcal{S}\left( \mathbf{R}^d\right)$ is the space of rapidly decreasing functions on $\mathbf{R}^d$ 
and $\mathcal{S}'\left( \mathbf{R}^d\right)$ denotes the space of continuous functionals on $\mathcal{S}\left( \mathbf{R}^d\right)$. It is well-known that Fourier transform is a bijection on $\mathcal{S}'\left( \mathbf{R}^d\right)$. We adopt the normalized definition for Fourier and its inverse transforms in this note, i.e.,
\begin{eqnarray*}
	\mathcal{F}\varphi\left(\xi\right) &=& \hat{\varphi}\left(\xi\right) :=\int e^{-i2\pi x\cdot \xi}\varphi\left(x\right)dx, \\
	\mathcal{F}^{-1}\varphi\left(x\right)&=&\check{\varphi}\left(x\right) := \int e^{i2\pi x\cdot \xi}\varphi\left(\xi\right)d\xi, \enskip \varphi\in\mathcal{S}\left( \mathbf{R}^d\right).
\end{eqnarray*}

For any L\'{e}vy measure $\nu$ we may symmetrize it as below.
\begin{eqnarray}
\bar{\nu}\left( dy\right):=\frac{1}{2}\left( \nu\left( dy\right)+\nu\left(- dy\right)\right).
\end{eqnarray}

As a convention, $C$ is a positive constant that represents different values in various contexts. Explicit dependence on certain quantities may be indicated when necessary.

\subsection{Function Spaces of Generalized Smoothness}

Our primary function spaces of generalized smoothness in this note are $\tilde{C}^{\beta}\left( \mathbf{R}^{d}\right),\beta \in \left( 0,1/\alpha\right)$ endowed with the norm
\begin{eqnarray*}
	\left\vert u\right\vert _{\beta }=\sup_{t,x}\left\vert u\left( t,x\right)
	\right\vert+\sup_{t,x,h\neq 0}\frac{\left\vert u\left(
		t,x+h\right) -u\left( t,x\right) \right\vert }{w\left( \left\vert h\right\vert\right)
		^{\beta }}:=\left\vert u\right\vert
	_{0}+\left[
u\right] _{\beta }<\infty  
\end{eqnarray*}
and $\tilde{C}^{1+\beta}\left( \mathbf{R}^{d}\right),\beta \in \left( 0,1/\alpha\right)$ with the norm
\begin{eqnarray*}
	\left\vert u\right\vert_{1+\beta }:=\left\vert u\right\vert
	_{0}+\left\vert L^{\mu}u\right\vert
	_{0}+\left[L^{\mu}
	u\right]_{\beta }<\infty,
\end{eqnarray*}
where $\mu$ is a reference measure satisfying \textbf{A(w,l)} for the same $w$ and $l$ as $\nu$, and $L^{\mu}$ is the associated operator defined as \eqref{op}. 

By \cite[Proposition 1]{mf}, these generalized H\"{o}lder norms are equivalent to the norm of generalized Besov spaces $\tilde{C}^{\beta}_{\infty,\infty}\left( \mathbf{R}^{d}\right)$:
\begin{eqnarray*}
	\left\vert u\right\vert_{\beta,\infty}=\sup_{j\in\mathbf{N}} w\left( N^{-j}\right)^{-\beta}\left\vert u\ast \varphi_j\right\vert _{0}<\infty, \quad \beta\in\left( 0,\infty\right).
\end{eqnarray*}
Given the choice of \cite{mf}, in above definition $\varphi_j\in\mathcal{S}\left( \mathbf{R}^{d}\right)$ for any $j\in\mathbf{N}$ and $\sum_{j=0}^{\infty}\mathcal{F}\varphi_j=1$. Moreover, when $j\geq 1$, $\mathcal{F}\varphi_j=\phi\left( N^{-j}\cdot\right)$ for some $N$ such that $l\left( N^{-1}\right)<1<l\left( N\right)$ and for some $\phi\in C_0^{\infty}\left(\mathbf{R}^d\right)$ so that $supp\left(\phi\right)=\{ \xi:N^{-1}\leq \left\vert \xi\right\vert\leq N\}$.

Set $\kappa\in\left[ 0,1\right]$ and $\beta>0$. Denote the L\'{e}vy symbol associated with $L^{\mu}$ by 
\begin{eqnarray*}
	\psi^{\mu}\left(\xi\right)=\int \left[ e^{i2\pi \xi\cdot y}-1-i2\pi\chi_{\alpha}\left( y\right)\xi\cdot y\right]\mu\left( dy\right), \xi\in\mathbf{R}^d,
\end{eqnarray*}
and denote
\begin{eqnarray*}
\psi^{\mu,\kappa}=\left\{\begin{array}{ll}
	\psi^{\mu} & \mbox{ if } \kappa=1,\\
	-\left(-\Re\psi^{\mu}\right)^{\kappa}  & \mbox{ if } \kappa\in\left( 0,1\right),\\
	1 & \mbox{ if } \kappa=0.
	\end{array}\right.
\end{eqnarray*}
Then the auxiliary space $C^{\mu,\kappa,\beta}\left( \mathbf{R}^{d}\right) $ is a class of functions whose norm
\begin{eqnarray*}
	\left\vert u\right\vert _{\mu,\kappa,\beta }:=\left\vert u\right\vert _{0}+\left\vert L^{\mu,\kappa}u\right\vert_{\beta,\infty}<\infty,
\end{eqnarray*}
where 
\begin{eqnarray}\label{opp}
	L^{\mu,\kappa}u:=\mathcal{F}^{-1}\left[ \psi^{\mu,\kappa}\mathcal{F}u\right], u\in \mathcal{S}'\left( \mathbf{R}^d\right).
\end{eqnarray}

Set
\begin{eqnarray*}
	\left( I-L^{\mu}\right)^{\kappa}u=\left\{\begin{array}{ll}
		\left( I-L^{\mu}\right)u & \mbox{ if } \kappa=1,\\
		\mathcal{F}^{-1}\left[ \left(1-\Re\psi^{\mu}\right)^{\kappa}\mathcal{F}u\right]  & \mbox{ if } \kappa\in\left[ 0,1\right).
	\end{array}\right.
\end{eqnarray*}
Another auxiliary space $\tilde{C}^{\mu,\kappa,\beta}\left( \mathbf{R}^{d}\right) $ is introduced as the collection of functions whose norm
\begin{eqnarray*}
	\left\Vert u\right\Vert _{\mu,\kappa,\beta }:=\left\vert \left( I-L^{\mu}\right)^{\kappa}u\right\vert_{\beta,\infty}<\infty.
\end{eqnarray*}

Lemmas \ref{rep}-\ref{rep2} below comprise a list of probabilistic representations that were derived in \cite{mf} and will be intensively used in next section. 
\begin{lemma}\cite[Lemma 8]{mf}\label{rep}
	Let $\nu$ be a L\'{e}vy measure satisfying (iii) in \textbf{A(w,l)} and $L^{\tilde{\nu}_R,\kappa}$ be defined as \eqref{opp}. Then for any $\varphi\left(x\right)\in C^{\infty}_b\left(\mathbf{R}^d\right)$, 
	\begin{eqnarray*}
	L^{\tilde{\nu}_R,\kappa}\varphi\left( x\right)=C\int_0^{\infty}t^{-1-\kappa}\mathbf{E}\left[\varphi\left( x+Z_t^{\overline{\tilde{\nu}_R}}\right)-\varphi\left( x\right)\right]dt,~\kappa\in\left( 0,1\right),\label{kap}
	\end{eqnarray*}
	where $C^{-1}=\int_0^{\infty}t^{-\kappa-1}\left(1- e^{-t}\right)dt$ and 
	\begin{eqnarray*}
		\overline{\tilde{\nu}_R}\left( dy\right)=\frac{1}{2}\left( \tilde{\nu}_R\left( dy\right)+\tilde{\nu}_R\left(- dy\right)\right),R>0.
	\end{eqnarray*}
	Besides, $L^{\tilde{\nu}_R,\kappa}\varphi\in C^{\infty}_b\left(\mathbf{R}^d\right)$. If furthermore $\varphi\left(x\right)\in \mathcal{S}\left(\mathbf{R}^d\right)$, then $\left\vert L^{\tilde{\nu}_R,\kappa}\varphi\right\vert_{L^1}<C'$ for some $C'>0$ uniform w.r.t. $R$.
\end{lemma}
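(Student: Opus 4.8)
The plan is to read the representation off Bochner's subordination formula on the Fourier side and then promote it from Schwartz functions to $C^{\infty}_b$, extracting the quantitative bounds along the way. Put $\Phi_R(\xi):=\int[1-\cos(2\pi\xi\cdot y)]\,\overline{\tilde\nu_R}(dy)$. Since $\overline{\tilde\nu_R}$ is symmetric, the compensating drift in its symbol drops out, so $\psi^{\overline{\tilde\nu_R}}(\xi)=\int[\cos(2\pi\xi\cdot y)-1]\,\overline{\tilde\nu_R}(dy)=\Re\psi^{\tilde\nu_R}(\xi)=-\Phi_R(\xi)\le 0$, and $\Phi_R$ is finite uniformly in $R$ because $\int(|y|^2\wedge 1)\,\overline{\tilde\nu_R}(dy)<\infty$ by (iii) of \textbf{A(w,l)} ($|y|^2\le|y|^{\alpha_1}$ on $\{|y|\le1\}$ since $\alpha_1\le2$, and $1\le|y|^{\alpha_2}$ on $\{|y|>1\}$ since $\alpha_2\ge0$). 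By definition $\psi^{\tilde\nu_R,\kappa}=-\Phi_R^{\kappa}$, so the first step is the elementary identity $\lambda^{\kappa}=C\int_0^{\infty}t^{-1-\kappa}(1-e^{-\lambda t})\,dt$ for $\lambda\ge0$ and $\kappa\in(0,1)$, with $C^{-1}=\int_0^{\infty}t^{-\kappa-1}(1-e^{-t})\,dt$ (change of variables $s=\lambda t$); applying it with $\lambda=\Phi_R(\xi)$ gives $\psi^{\tilde\nu_R,\kappa}(\xi)=C\int_0^{\infty}t^{-1-\kappa}\big(e^{t\psi^{\overline{\tilde\nu_R}}(\xi)}-1\big)\,dt$, where $e^{t\psi^{\overline{\tilde\nu_R}}(\xi)}=\mathbf{E}\big[e^{i2\pi\xi\cdot Z_t^{\overline{\tilde\nu_R}}}\big]$ is the characteristic function of the symmetric L\'{e}vy process $Z_t^{\overline{\tilde\nu_R}}$.

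Next, for $\varphi\in\mathcal{S}(\mathbf{R}^d)$ I would write $L^{\tilde\nu_R,\kappa}\varphi=\mathcal{F}^{-1}[\psi^{\tilde\nu_R,\kappa}\hat\varphi]$, insert this representation of $\psi^{\tilde\nu_R,\kappa}$, and pull the $dt$-integral through $\mathcal{F}^{-1}$ and through $\mathbf{E}$. Because $\mathcal{F}^{-1}[e^{i2\pi\xi\cdot a}\hat\varphi(\xi)](x)=\varphi(x+a)$, one gets $\mathcal{F}^{-1}[e^{t\psi^{\overline{\tilde\nu_R}}(\xi)}\hat\varphi(\xi)](x)=\mathbf{E}[\varphi(x+Z_t^{\overline{\tilde\nu_R}})]$ and hence the claimed formula. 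To justify the interchange I would check that the double integral converges absolutely: by Dynkin's formula, $\mathbf{E}[\varphi(x+Z_t^{\overline{\tilde\nu_R}})]-\varphi(x)=\int_0^t\mathbf{E}[L^{\overline{\tilde\nu_R}}\varphi(x+Z_s^{\overline{\tilde\nu_R}})]\,ds$, so the difference is $O(t)$ as $t\to0$ and bounded by $2|\varphi|_0$ for $t\ge1$, and $\int_0^1 t^{-\kappa}\,dt+\int_1^{\infty}t^{-1-\kappa}\,dt<\infty$ as $\kappa\in(0,1)$. Here $L^{\overline{\tilde\nu_R}}$ (the operator \eqref{op} for the measure $\overline{\tilde\nu_R}$) is well-defined on $C^{\infty}_b$ since, by symmetry, $L^{\overline{\tilde\nu_R}}\varphi(x)=\tfrac12\int[\varphi(x+y)+\varphi(x-y)-2\varphi(x)]\,\overline{\tilde\nu_R}(dy)$, whose integrand is $O(|y|^2\wedge1)$.

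For $\varphi\in C^{\infty}_b$ the right-hand side still defines a bounded function, and it equals a convolution of $\varphi$ against a fixed distribution whose Fourier multiplier was identified as $\psi^{\tilde\nu_R,\kappa}$ in the Schwartz computation; since $L^{\tilde\nu_R,\kappa}$ is the convolution operator with the same multiplier, the two agree, which also pins down the meaning of $L^{\tilde\nu_R,\kappa}$ on $C^{\infty}_b$ even where $\psi^{\tilde\nu_R,\kappa}$ is not smooth. Smoothness of the output is then immediate, since differentiating the convolution moves the derivative onto $\varphi$, keeping it in $C^{\infty}_b$ and controlled by the same estimates, so $L^{\tilde\nu_R,\kappa}\varphi\in C^{\infty}_b(\mathbf{R}^d)$. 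For the uniform $L^1$ bound when $\varphi\in\mathcal{S}$, I would use Fubini and translation invariance of Lebesgue measure to get $|L^{\tilde\nu_R,\kappa}\varphi|_{L^1}\le C\int_0^{\infty}t^{-1-\kappa}\int|\mathbf{E}[\varphi(x+Z_t^{\overline{\tilde\nu_R}})]-\varphi(x)|\,dx\,dt$; the inner integral is $\le2|\varphi|_{L^1}$ for $t\ge1$ and $\le t\int|L^{\overline{\tilde\nu_R}}\varphi(x)|\,dx$ for $t\le1$, and $\int|L^{\overline{\tilde\nu_R}}\varphi(x)|\,dx$ is bounded by a finite combination of $L^1$-norms of derivatives of $\varphi$ times $\int(|y|^2\wedge1)\,\overline{\tilde\nu_R}(dy)$, hence uniform in $R$ by (iii) of \textbf{A(w,l)}; integrating against $t^{-1-\kappa}$ yields $C'$ independent of $R$.

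The main obstacle is precisely this convergence bookkeeping: one must see that the $(t,\xi)$- and $(t,x)$-integrals converge absolutely with constants uniform in $R$, which forces combining the short-time estimate $\mathbf{E}[\varphi(\cdot+Z_t^{\overline{\tilde\nu_R}})]-\varphi=O(t)$ coming from Dynkin's formula with the scale-uniform moment control on $\overline{\tilde\nu_R}$ furnished by the scalability hypothesis (iii) of \textbf{A(w,l)}; once that is in hand, the Fubini interchanges and the identity $\mathcal{F}^{-1}[e^{t\psi^{\overline{\tilde\nu_R}}}\hat\varphi]=\mathbf{E}[\varphi(\cdot+Z_t^{\overline{\tilde\nu_R}})]$ are routine.
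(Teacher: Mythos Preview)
The paper does not supply a proof of this lemma; it is quoted verbatim from \cite[Lemma~8]{mf} and used as a black box, so there is no argument in the present paper to compare against. Your sketch is the standard derivation via Bochner's subordination identity $\lambda^{\kappa}=C\int_0^{\infty}t^{-1-\kappa}(1-e^{-\lambda t})\,dt$ applied to $\lambda=-\Re\psi^{\tilde\nu_R}(\xi)$, followed by Fourier inversion and Dynkin's formula for the small-$t$ control; this is correct and is exactly how such representations are proved in the literature (and, presumably, in \cite{mf}). The only point worth tightening is the passage from $\mathcal{S}$ to $C_b^{\infty}$: rather than invoking ``the same multiplier'' abstractly, it is cleaner to observe that the right-hand side of the representation is manifestly well defined for $\varphi\in C_b^{\infty}$ (by your $O(t)$ and $O(1)$ bounds), agrees with $L^{\tilde\nu_R,\kappa}\varphi$ on $\mathcal{S}$, and both sides are continuous under bounded pointwise convergence of $\varphi$ and its derivatives, which suffices since $C_b^{\infty}$ functions can be approximated in that topology by Schwartz functions.
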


\begin{lemma}\cite[Lemma 9]{mf}\label{bij}
	Let $a>0$ and $\nu$ be a L\'{e}vy measure satisfying (iii) in \textbf{A(w,l)}. Then $aI-L^{\nu}$ defines a bijection on $C_b^{\infty}\left( \mathbf{R}^d\right)$. Moreover, for all $C_b^{\infty}\left( \mathbf{R}^d\right)$ functions $\varphi$, the following representations hold.
	\begin{eqnarray*}
	\varphi\left( x\right) &=&\int_{0}^{\infty }e^{-a t}\mathbf{E}\left( aI
	-L^{\nu}\right) \varphi\left( x+Z_{t}^{\nu}\right) dt,\label{e1} \\
	\left( aI
	-L^{\nu}\right)^{-1}\varphi\left( x\right) &=&\int_{0}^{\infty }e^{-a t}\mathbf{E} \varphi\left( x+Z_{t}^{\nu}\right) dt,\quad x\in \mathbf{R}^{d}.\label{e2}
	\end{eqnarray*}
\end{lemma}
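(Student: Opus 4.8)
The statement is the resolvent/bijection theory for the Lévy semigroup of $Z_t^{\nu}$, so the plan is to realize $aI-L^{\nu}$ through the transition semigroup and read off both the inverse formula and the bijectivity from the resolvent identities. Set $T_t\varphi\left( x\right):=\mathbf{E}\varphi\left( x+Z_t^{\nu}\right)$. The first order of business is to verify that $L^{\nu}$ is a bounded operator from $C_b^{\infty}\left(\mathbf{R}^d\right)$ into itself. For $\varphi\in C_b^{\infty}$ a second-order Taylor expansion bounds the integrand in \eqref{op} by $C\left(\left\vert y\right\vert^2\wedge 1\right)$ when $\chi_{\alpha}\left( y\right)=1$ and by $C\left(\left\vert y\right\vert\wedge 1\right)$ when $\chi_{\alpha}\left( y\right)=0$; splitting the integral over $\{\left\vert y\right\vert\leq 1\}$ and $\{\left\vert y\right\vert>1\}$ and using $\left\vert y\right\vert^2\leq\left\vert y\right\vert^{\alpha_1}$ for $\left\vert y\right\vert\leq 1$ together with $\left\vert y\right\vert\leq\left\vert y\right\vert^{\alpha_2}$ for $\left\vert y\right\vert>1$, assertion (iii) of \textbf{A(w,l)} (taken at $R=1$, where $\tilde{\nu}_1=\nu$) shows $L^{\nu}\varphi$ is finite and bounded in each of the three $\alpha$-regimes, the large-$\left\vert y\right\vert$ part being controlled either by finiteness of $\nu$ away from the origin or by the $\left\vert y\right\vert^{\alpha_2}$ bound. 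Differentiating under the integral (justified by the same domination) gives $D^{\gamma}L^{\nu}\varphi=L^{\nu}D^{\gamma}\varphi$, so $L^{\nu}\varphi\in C_b^{\infty}$.

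Next I would establish the Kolmogorov equation $\frac{d}{dt}T_t\varphi=T_tL^{\nu}\varphi=L^{\nu}T_t\varphi$. Since differentiation commutes with the convolution defining $T_t$, the semigroup preserves $C_b^{\infty}$ and satisfies the contraction bound $\left\vert T_t\varphi\right\vert_0\leq\left\vert\varphi\right\vert_0$. Applying Itô's formula to $\varphi\left( x+Z_t^{\nu}\right)$ with the decomposition \eqref{lev}, the martingale contribution from $\tilde{J}$ vanishes in expectation and the compensator $\chi_{\alpha}$ exactly reconstitutes the generator, yielding Dynkin's formula $T_t\varphi\left( x\right)-\varphi\left( x\right)=\int_0^t T_sL^{\nu}\varphi\left( x\right)ds$. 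Stochastic continuity of $Z^{\nu}$ together with boundedness of $L^{\nu}\varphi$ make $s\mapsto T_sL^{\nu}\varphi\left( x\right)$ continuous, so $T_t\varphi$ is $C^1$ in $t$ with the stated derivative, and a Fubini argument applied to \eqref{op} gives $T_tL^{\nu}\varphi=L^{\nu}T_t\varphi$.

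With these in hand, set $R_a\varphi\left( x\right):=\int_0^{\infty}e^{-at}T_t\varphi\left( x\right)dt$; the contraction bound gives $\left\vert R_a\varphi\right\vert_0\leq a^{-1}\left\vert\varphi\right\vert_0$, and $R_a$ preserves $C_b^{\infty}$. Integration by parts in $t$, using $\frac{d}{dt}T_t\varphi=L^{\nu}T_t\varphi$, the vanishing of $e^{-at}T_t\varphi$ as $t\to\infty$ (by the sup-norm bound), and its value $\varphi$ at $t=0$, yields
\[
L^{\nu}R_a\varphi=aR_a\varphi-\varphi,\qquad\text{i.e.}\qquad\left( aI-L^{\nu}\right)R_a\varphi=\varphi.
\]
The symmetric computation with $T_tL^{\nu}\varphi$ in the integrand gives $R_a\left( aI-L^{\nu}\right)\varphi=\varphi$. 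The latter is precisely the first displayed identity of the lemma and also forces injectivity of $aI-L^{\nu}$ on $C_b^{\infty}$, since $\left( aI-L^{\nu}\right)\varphi=0$ then gives $\varphi=R_a0=0$; surjectivity follows because any $\psi\in C_b^{\infty}$ has the preimage $R_a\psi$. Hence $aI-L^{\nu}$ is a bijection with inverse $R_a$, which is the second displayed identity.

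I expect the main obstacle to be the rigorous justification of Dynkin's formula across the three regimes of $\alpha$: the truncation $\chi_{\alpha}$ and the compensated/uncompensated split in \eqref{lev} must be matched carefully to the generator \eqref{op}, and one must confirm that the Itô correction terms are integrable against $\nu$ uniformly enough to interchange expectation with the $dt$-integral and to differentiate the $t$-map — exactly the point where assumption (iii) of \textbf{A(w,l)} is indispensable. Everything after that is soft semigroup bookkeeping.
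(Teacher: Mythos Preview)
Your argument is correct and is the standard semigroup/resolvent derivation: verify $L^{\nu}$ preserves $C_b^{\infty}$, obtain Dynkin's formula via It\^{o}, and then integrate by parts in $t$ to get both $\left( aI-L^{\nu}\right)R_a=\mathrm{Id}$ and $R_a\left( aI-L^{\nu}\right)=\mathrm{Id}$. Note, however, that the present paper does not supply its own proof of this lemma --- it is quoted verbatim from \cite[Lemma~9]{mf} --- so there is no in-paper proof to compare against; what you have written is exactly the classical argument one expects that reference to contain, and your identification of assumption~(iii) of \textbf{A(w,l)} as the source of the needed integrability is on point.
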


\begin{lemma}\cite[Lemma 10]{mf}\label{rep2}
	Let $a>0$ and $\kappa\in\left( 0,1\right)$. Suppose $\nu$ is a L\'{e}vy measure satisfying (iii) in \textbf{A(w,l)}.
	Then $\left( aI-L^{\nu}\right)^{\kappa}$ is a bijection on $C_b^{\infty}\left( \mathbf{R}^d\right)$. Moreover, for all $C_b^{\infty}\left( \mathbf{R}^d\right)$ functions $\varphi$, 
	\begin{eqnarray}
	\qquad\left( aI-L^{\nu}\right)^{\kappa}\varphi\left(x\right)&=&C\int_0^{\infty}t^{-\kappa-1}\left[ \varphi\left(x\right)-e^{-at}\mathbf{E}\varphi\left( x+Z_t^{\bar{\nu}}\right)\right]dt,\quad\label{rp1}\\
	\qquad\quad\left( aI-L^{\nu}\right)^{-\kappa}\varphi\left(x\right)&=&C'\int_0^{\infty}t^{\kappa-1}e^{-at}\mathbf{E}\varphi\left( x+Z_t^{\bar{\nu}}\right)dt,\label{rp2}
	\end{eqnarray}	
	where $C^{-1}=\int_0^{\infty}t^{-\kappa-1}\left(1- e^{-t}\right)dt$, $C'^{-1}=\int_0^{\infty}t^{\kappa-1}e^{-t}dt$ and $Z_t^{\bar{\nu}}$ is the L\'{e}vy process associated with $\bar{\nu}$.
\end{lemma}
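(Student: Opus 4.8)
The plan is to reduce the fractional powers $(aI-L^{\nu})^{\pm\kappa}$ to fractional powers of the \emph{symmetrized} operator $L^{\bar\nu}$ and realize them through the convolution semigroup $P_t\varphi(x):=\mathbf{E}\varphi(x+Z_t^{\bar\nu})$. First I would record the algebraic identity $\Re\psi^{\nu}=\psi^{\bar\nu}$: because $\bar\nu$ is symmetric, the compensating linear term and the imaginary part of $e^{i2\pi\xi\cdot y}$ integrate to zero, so $\psi^{\bar\nu}(\xi)=\int[\cos(2\pi\xi\cdot y)-1]\nu(dy)=\Re\psi^{\nu}(\xi)\le 0$. By the very definition of $(aI-L^{\nu})^{\pm\kappa}$ through the multiplier $(a-\Re\psi^{\nu})^{\pm\kappa}$ (cf. \eqref{opp}), these operators coincide with the fractional powers of $aI-L^{\bar\nu}$, and the governing object is $P_t$, whose symbol is $\exp(t\psi^{\bar\nu}(\xi))$ and whose generator is $L^{\bar\nu}$. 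Since $\lvert y\rvert^{\alpha_i}$ is even, $\bar\nu$ inherits the scalability bounds in \textbf{A(w,l)}(iii), so Lemma \ref{bij} applies to $\bar\nu$ as well.

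Next I would establish the representations. Applying the scalar identities $\lambda^{\kappa}=C\int_0^{\infty}(1-e^{-\lambda t})t^{-1-\kappa}\,dt$ and $\lambda^{-\kappa}=C'\int_0^{\infty}t^{\kappa-1}e^{-\lambda t}\,dt$ (valid for $\lambda>0$, with exactly the constants $C,C'$ of the statement) to $\lambda=a-\psi^{\bar\nu}(\xi)\ge a>0$, together with $\mathcal{F}[P_t\varphi](\xi)=e^{t\psi^{\bar\nu}(\xi)}\hat\varphi(\xi)$, one gets on the Fourier side
\[
(a-\psi^{\bar\nu})^{\kappa}\hat\varphi=C\int_0^{\infty}\bigl(\hat\varphi-e^{-at}\mathcal{F}[P_t\varphi]\bigr)t^{-1-\kappa}\,dt,\qquad (a-\psi^{\bar\nu})^{-\kappa}\hat\varphi=C'\int_0^{\infty}t^{\kappa-1}e^{-at}\mathcal{F}[P_t\varphi]\,dt,
\]
which, after $\mathcal{F}^{-1}$ and Fubini (legitimate for $\varphi\in\mathcal{S}$ by rapid decay), yields \eqref{rp1}–\eqref{rp2} on Schwartz functions. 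To pass to $C_b^{\infty}$ I would argue from the integrals directly. Assumption \textbf{A(w,l)}(iii) guarantees $L^{\bar\nu}\varphi\in C_b^{\infty}$ (the small-jump part is controlled by $\int_{\lvert y\rvert\le 1}\lvert y\rvert^{\alpha_1}\tilde\nu_1(dy)<\infty$ and the large-jump part by $\int_{\lvert y\rvert>1}\lvert y\rvert^{\alpha_2}\tilde\nu_1(dy)<\infty$, with $\alpha_2>1$ precisely when needed). Hence $\tfrac{d}{dt}[e^{-at}P_t\varphi]=e^{-at}P_t(L^{\bar\nu}\varphi-a\varphi)$ is uniformly bounded and $\lvert\varphi(x)-e^{-at}P_t\varphi(x)\rvert\le Ct$; combined with the crude bound $\le 2\lvert\varphi\rvert_0$ for large $t$, this makes the integral in \eqref{rp1} converge absolutely and uniformly in $x$ (here $\kappa<1$ is used at $t\to 0$ and $\kappa>0$ at $t\to\infty$), and likewise for \eqref{rp2} with weight $t^{\kappa-1}e^{-at}$. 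Differentiating under the integral, which is licit since $P_t$ commutes with $D^{\gamma}$ and $\lvert D^{\gamma}\varphi\rvert_0,\lvert L^{\bar\nu}D^{\gamma}\varphi\rvert_0<\infty$, shows both right-hand sides define $C_b^{\infty}$ functions, so the formulas extend from $\mathcal{S}$ to all of $C_b^{\infty}$.

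For bijectivity, write $T_{+}$ and $T_{-}$ for the operators on the right of \eqref{rp1} and \eqref{rp2}. I would prove $T_{+}T_{-}=T_{-}T_{+}=\mathrm{id}$ on $C_b^{\infty}$ \emph{without} invoking the symbol, using only the semigroup law $P_tP_s=P_{t+s}$ and Fubini. Inserting $e^{-at}P_tT_{-}\varphi=C'\int_0^{\infty}s^{\kappa-1}e^{-a(t+s)}P_{t+s}\varphi\,ds$ into $T_{+}T_{-}\varphi$ and substituting $r=s+t$, the double integral reduces after interchanging the $t$- and $r$-orders to a Beta-type integral whose value, given $C=\kappa/\Gamma(1-\kappa)$ and $C'=1/\Gamma(\kappa)$ and the reflection formula $\Gamma(\kappa)\Gamma(1-\kappa)=\pi/\sin(\pi\kappa)$, equals $1$, so $T_{+}T_{-}\varphi=\varphi$; the reverse composition is analogous. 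This gives injectivity of $(aI-L^{\nu})^{\kappa}=T_{+}$ (from $T_{-}T_{+}=\mathrm{id}$) and surjectivity (from $T_{+}T_{-}=\mathrm{id}$), with inverse $(aI-L^{\nu})^{-\kappa}=T_{-}$.

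The main obstacle is that $\psi^{\nu}$ is in general only continuous, not smooth, so one cannot define or invert the multiplier $(a-\Re\psi^{\nu})^{\pm\kappa}$ by dividing the tempered distribution $\hat\varphi$, nor run the bijectivity argument on the Fourier side; the entire strategy is to transfer to the probabilistic integrals and establish the inverse relation via $P_tP_s=P_{t+s}$ and Fubini. A secondary delicate point is the collapse of the double integral for $T_{+}T_{-}$: the naive splitting at $r=t$ produces individually divergent pieces near $r=0$, so I would carry out the joint change of variables (or a short regularization in the parameter $a$) to expose the cancellation before integrating. The only place the scalability hypothesis \textbf{A(w,l)}(iii) is genuinely needed is in the bound $\lvert\varphi-e^{-at}P_t\varphi\rvert\le Ct$, i.e.\ in the boundedness of $L^{\bar\nu}\varphi$ that secures integrability at $t\to 0$ in \eqref{rp1}.
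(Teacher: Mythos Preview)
The paper does not prove this lemma; it is quoted from \cite[Lemma~10]{mf} as a preliminary, so there is no in-paper argument to compare against. Your approach---reducing to the symmetrized measure via $\Re\psi^{\nu}=\psi^{\bar\nu}$, invoking the Balakrishnan and Gamma-subordination scalar identities for $\lambda^{\pm\kappa}$ with $\lambda=a-\psi^{\bar\nu}(\xi)$, and controlling the $t\to 0$ behaviour in \eqref{rp1} through the bound $\lvert\varphi-e^{-at}P_t\varphi\rvert_0\le t\,\lvert(aI-L^{\bar\nu})\varphi\rvert_0$ (which is exactly where \textbf{A(w,l)}(iii) is used)---is correct and is the standard route, and almost certainly the one taken in \cite{mf}.

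One minor suggestion on the bijectivity step: since the Fourier-side identity $(a-\psi^{\bar\nu})^{\kappa}(a-\psi^{\bar\nu})^{-\kappa}=1$ needs no regularity of the symbol (only that $\hat\varphi$ is a function and $a-\psi^{\bar\nu}\ge a>0$), you can verify $T_{+}T_{-}=\mathrm{id}$ on $C_c^{\infty}(\mathbf{R}^d)\subset\mathcal{S}$ directly from the multiplier and then extend to $C_b^{\infty}$ by truncating $\varphi$ and passing to the limit using the uniform-in-$t$ bounds you have already established; this sidesteps the delicate joint change of variables and the individually divergent intermediate pieces in the direct semigroup computation.
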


\noindent\textbf{Remark:} Lemmas \ref{rep} and \ref{rep2} imply that $L^{\mu,\kappa}$, $\left( aI-L^{\nu}\right)^{\kappa}$, $\left( aI-L^{\nu}\right)^{-\kappa},\kappa\in\left( 0,1\right]$ are closed operations in $C_b^{\infty}\left( \mathbf{R}^d\right)$. Therefore, they may be all extended to $\kappa\in\left(1,2\right)$ through composition of operators. It was shown in \cite[Corollary 2]{mf} that \eqref{rp2} also holds for $ \kappa\in\left(1,2\right)$.

\begin{lemma}\cite[Lemma 6 and Proposition 6]{mf}\label{cont}
	Let $\beta>0$ and $\kappa\in\left[0,2\right)$. Suppose $\nu$ is a L\'{e}vy measure satisfying \textbf{A(w,l)}. Then \eqref{opp} is well-defined for all $\kappa$ and all $u\in\tilde{C}^{\kappa+\beta}_{\infty,\infty}\left(\mathbf{R}^d\right)$,
	\begin{eqnarray}\label{ext}
	L^{\nu,\kappa}u\left(x\right)=\lim_{n\rightarrow \infty}L^{\nu,\kappa}u_n\left(x\right),x\in\mathbf{R}^d,
	\end{eqnarray}
	and this convergence is uniform with respect to $x$. Moreover, 
	\begin{eqnarray*}
		\left\vert L^{\nu,\kappa}u\right\vert_{0}\leq\left\vert L^{\nu,\kappa}u\right\vert_{\beta,\infty}&\leq& C\left\vert u\right\vert_{\kappa+\beta,\infty}
	\end{eqnarray*}
	for some $C>0$ independent of $u$.
\end{lemma}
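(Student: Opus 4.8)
The plan is to run the argument through the Littlewood--Paley decomposition attached to the $\varphi_j$ and to reduce the whole statement to a single scale-uniform multiplier estimate, which the probabilistic representation in Lemma \ref{rep} then supplies. Throughout I write $u_n=\sum_{j=0}^{n}u\ast\varphi_j$ for the partial sums, so that $u_n\to u$ in $\mathcal S'$ and each $u_n$ is band-limited and smooth, making $L^{\nu,\kappa}u_n$ unambiguous through \eqref{opp}. The key per-block estimate I aim to prove is
\[
\left\vert L^{\nu,\kappa}\left(u\ast\varphi_j\right)\right\vert_0\le C\,w\left(N^{-j}\right)^{-\kappa}\left\vert u\ast\varphi_j\right\vert_0,\qquad j\in\mathbf N,
\]
with $C$ independent of $j$ and $u$. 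Granting this, everything else is bookkeeping: since $\mathcal F\varphi_k$ is supported in $\{N^{k-1}\le|\xi|\le N^{k+1}\}$, the block $(L^{\nu,\kappa}u)\ast\varphi_k$ receives contributions only from the finitely many $j$ with $|j-k|\le 1$, so Young's inequality together with $|u\ast\varphi_j|_0\le w(N^{-j})^{\kappa+\beta}|u|_{\kappa+\beta,\infty}$ gives $w(N^{-k})^{-\beta}|(L^{\nu,\kappa}u)\ast\varphi_k|_0\le C|u|_{\kappa+\beta,\infty}$, whence $|L^{\nu,\kappa}u|_{\beta,\infty}\le C|u|_{\kappa+\beta,\infty}$; the first inequality $|L^{\nu,\kappa}u|_0\le|L^{\nu,\kappa}u|_{\beta,\infty}$ is the generalized Besov embedding (absorbing a harmless constant into $C$).

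For the per-block estimate I would localise and rescale. Fixing a fattened cutoff $\tilde\phi_0\in C_0^\infty$ equal to $1$ on $\{N^{-1}\le|\xi|\le N\}$ and setting $\tilde\phi_j=\tilde\phi_0(N^{-j}\cdot)$, one has $L^{\nu,\kappa}(u\ast\varphi_j)=K_j\ast(u\ast\varphi_j)$ with $K_j=\mathcal F^{-1}[\psi^{\nu,\kappa}\tilde\phi_j]$, so $|L^{\nu,\kappa}(u\ast\varphi_j)|_0\le|K_j|_{L^1}|u\ast\varphi_j|_0$. It then remains to show $|K_j|_{L^1}\le C\,w(N^{-j})^{-\kappa}$. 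Here I would use the scaling structure \eqref{mea}--\eqref{meas}: the substitution $\xi=N^{j}\eta$ together with the relation $\psi^{\nu,\kappa}(N^{j}\eta)=w(N^{-j})^{-\kappa}\psi^{\tilde\nu_{N^{-j}},\kappa}(\eta)$ converts $K_j$ into a dilate of the fixed-scale kernel $\tilde K_j=\mathcal F^{-1}[\psi^{\tilde\nu_{N^{-j}},\kappa}\tilde\phi_0]$, and since dilation preserves the $L^1$ norm one gets $|K_j|_{L^1}=w(N^{-j})^{-\kappa}|\tilde K_j|_{L^1}$. The symbol identity itself follows from $\tilde\nu_R=w(R)\nu_R$ by a change of variables; the only delicate point is reconciling the drift/compensator terms, which vanish identically for $\alpha\in(0,1)$ (where $\chi_\alpha\equiv 0$) and for $\alpha\in(1,2)$ (where $\chi_\alpha\equiv 1$), and which in the borderline case $\alpha=1$ are annihilated by the symmetry assumption in \textbf{A(w,l)}(ii).

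The crux is therefore the scale-uniform bound $\sup_j|\tilde K_j|_{L^1}<\infty$, and this is precisely what Lemma \ref{rep} delivers: writing $\psi^{\tilde\nu_{N^{-j}},\kappa}\tilde\phi_0=\mathcal F[L^{\tilde\nu_{N^{-j}},\kappa}\check{\tilde\phi_0}]$ for the fixed Schwartz function $\check{\tilde\phi_0}$, the last assertion of Lemma \ref{rep} yields $|\tilde K_j|_{L^1}=|L^{\tilde\nu_{N^{-j}},\kappa}\check{\tilde\phi_0}|_{L^1}<C'$ with $C'$ uniform in the scaling parameter $R=N^{-j}$. I regard this uniform-in-$R$ kernel bound as the main obstacle: it rests on the non-degeneracy and the integrability condition \eqref{mu0} in \textbf{A(w,l)}(i), through which the density estimates of \cite{cr} for $Z_t^{\tilde\nu_R}$ become uniform over $R$, and on keeping the compensation term controlled when $\alpha=1$. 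The case $\kappa=1$ runs through the same localisation applied to $\psi^{\nu}$ itself, the unit-scale $L^1$ bound again coming from the density estimates of \cite{cr}; $\kappa=0$ is immediate since $L^{\nu,0}=I$; and $\kappa\in(1,2)$ follows by composition as in the Remark, applying the $\kappa\in(0,1]$ estimate twice.

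Finally, for well-definedness and the approximation identity \eqref{ext}, the per-block estimate makes $\{L^{\nu,\kappa}u_n\}$ uniformly Cauchy, because
\[
\sum_{j>m}\left\vert L^{\nu,\kappa}\left(u\ast\varphi_j\right)\right\vert_0\le C\left\vert u\right\vert_{\kappa+\beta,\infty}\sum_{j>m}w\left(N^{-j}\right)^{\beta},
\]
and iterating the scaling inequality \eqref{scale} with the normalisation $w(1)=1$ gives $w(N^{-j})\le l(N^{-1})^{j}$, where $l(N^{-1})<1$ by the choice of $N$; hence the series converges geometrically and the tail tends to $0$. The resulting uniform limit agrees with the distributional definition \eqref{opp} in $\mathcal S'$, which both defines $L^{\nu,\kappa}u$ on $\tilde C^{\kappa+\beta}_{\infty,\infty}(\mathbf R^d)$ and establishes \eqref{ext} with its uniform convergence in $x$.
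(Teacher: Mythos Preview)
The paper does not prove this lemma: it is quoted verbatim from \cite{mf} (Lemma~6 and Proposition~6), with no argument supplied here. So there is nothing in this paper to compare your proposal against beyond the bare statement.

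That said, your Littlewood--Paley reduction to a scale-uniform $L^1$ kernel bound is the natural route and is essentially how such results are proved; the identification $\tilde K_j=L^{\tilde\nu_{N^{-j}},\kappa}\check{\tilde\phi}_0$ together with the last line of Lemma~\ref{rep} is exactly the right lever. A few points to tighten. First, Lemma~\ref{rep} as stated covers only $\kappa\in(0,1)$; for $\kappa=1$ you need the analogous uniform bound $\sup_R|L^{\tilde\nu_R}\check{\tilde\phi}_0|_{L^1}<\infty$, which is not literally given here but follows from \textbf{A(w,l)}(iii) by the elementary pointwise estimate on $\nabla^\alpha\check{\tilde\phi}_0(x;y)$ and a moment-type integration in $x$ --- you should spell this out rather than just pointing to \cite{cr}. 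Second, the block $j=0$ has $\widehat{\varphi_0}$ supported in a ball, not an annulus, so your $\tilde\phi_0$ argument does not apply as written; handle $j=0$ separately (it is a single fixed Schwartz kernel, no uniformity needed). Third, be careful with the phrase ``agrees with the distributional definition \eqref{opp}'': since $\psi^{\nu,\kappa}$ need not be smooth, multiplication of $\widehat u\in\mathcal S'$ by $\psi^{\nu,\kappa}$ is not automatic; the cleanest fix is to \emph{define} $L^{\nu,\kappa}u$ on $\tilde C^{\kappa+\beta}_{\infty,\infty}$ by the uniform limit \eqref{ext} and check consistency with \eqref{opp} on the dense subclass where both make sense. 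With these adjustments your argument goes through.
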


Based on Lemmas \ref{rep}-\ref{cont}, norm equivalence were established.
\begin{proposition}\cite[Theorems 3.2 and 3.3]{mf}\label{thm4}
	Let $\nu$ be a L\'{e}vy measure satisfying \textbf{A(w,l)}, $\beta>0,\kappa\in\left( 0,1\right]$. Then norms $\left\vert u\right\vert _{\nu,\kappa,\beta }$, $\left\Vert u\right\Vert _{\nu,\kappa,\beta }$ and $\left\vert u\right\vert_{\kappa+\beta,\infty}$ are mutually equivalent.	
\end{proposition}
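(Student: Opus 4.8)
The plan is to pass everything through the Littlewood--Paley blocks $S_ju:=u\ast\varphi_j$ and to show that each of the operators $L^{\nu,\kappa}$, $(I-L^{\nu})^{\kappa}$, $(I-L^{\nu})^{-\kappa}$ acts on a single block essentially as multiplication by $w(N^{-j})^{\mp\kappa}$, up to a Fourier multiplier whose $L^{\infty}$--operator norm is uniform in $j$. One direction of the first claimed equivalence is then free: $|u|_{\nu,\kappa,\beta}=|u|_{0}+|L^{\nu,\kappa}u|_{\beta,\infty}\le C|u|_{\kappa+\beta,\infty}$ by Lemma \ref{cont}, together with $|u|_{0}\le\sum_{j}|S_ju|_{0}\le C\sum_{j}w(N^{-j})^{\kappa+\beta}|u|_{\kappa+\beta,\infty}<\infty$, the series converging because $w(N^{-j})\le l(N^{-1})^{j}$ decays geometrically by the choice of $N$. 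So the genuine content is two analytic inputs --- a two--sided comparison of the L\'{e}vy symbol with $w(1/|\xi|)^{-1}$, and the uniform multiplier bounds --- after which the remaining directions follow by routine block bookkeeping.

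First I would prove the symbol estimates. The scaling identity $\Re\psi^{\tilde{\nu}_R}(\xi)=w(R)\Re\psi^{\nu}(\xi/R)$, immediate from \eqref{mea}--\eqref{meas}, reduces matters to bounds for $\Re\psi^{\tilde{\nu}_R}$ over the unit sphere that are uniform in $R$. Elementary estimates on $1-\cos$ together with \textbf{A(w,l)}(iii) give $-\Re\psi^{\tilde{\nu}_R}(\xi)\le C$ for $\xi\in S_{d-1}$, while the non--degeneracy in \textbf{A(w,l)}(i), via $\tilde{\nu}_R\ge\mu^{0}$ and $\int_{|y|\le1}|\xi\cdot y|^{2}\mu^{0}(dy)\ge c_0>0$, gives the matching lower bound. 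Hence, for $|\xi|\ge1$, both $-\Re\psi^{\nu}(\xi)$ and $1-\Re\psi^{\nu}(\xi)$ are comparable to $w(1/|\xi|)^{-1}$, while $1-\Re\psi^{\nu}(\xi)\ge1$ and $-\Re\psi^{\nu}(\xi)$ stay bounded for $|\xi|\le1$; a parallel computation controlling the drift term $\chi_{\alpha}$ --- and, when $\alpha=1$, using the symmetry \eqref{alpha1} --- gives $|\Im\psi^{\nu}(\xi)|\le Cw(1/|\xi|)^{-1}$, so $|\psi^{\nu,\kappa}(\xi)|\asymp w(1/|\xi|)^{-\kappa}$ for $|\xi|\ge1$, and for $\kappa=1$ one still has $|\psi^{\nu}(\xi)|\ge|\Re\psi^{\nu}(\xi)|>0$ at high frequency.

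The main obstacle is the uniform multiplier bounds, which cannot come from Mikhlin's theorem since $\psi^{\nu}$ need not be smooth in $\xi$. The idea is to rescale: for a block at frequency $\sim N^{j}$, a dilation of the space variable by $N^{j}$ converts $(I-L^{\nu})^{\pm\kappa}$ and $L^{\nu,\kappa}$ into the corresponding operators for the scaled measure $\tilde{\nu}_{N^{-j}}$ acting on functions whose Fourier transforms live in the \emph{fixed} annulus $\{N^{-1}\le|\xi|\le N\}$, on which --- by the symbol estimates above --- all the relevant symbols are two--sidedly comparable to $1$ uniformly in $j$. For the scaled operators I would invoke the probabilistic representations of Lemmas \ref{rep}--\ref{rep2}: each of $L^{\tilde{\nu}_R,\kappa}\varphi$, $(I-L^{\tilde{\nu}_R})^{\kappa}\varphi$, $(I-L^{\tilde{\nu}_R})^{-\kappa}\varphi$ is a $t$--average of translates $\varphi(\cdot+Z_t^{\overline{\tilde{\nu}_R}})$ minus correction terms, so after an extra $\varphi_j$--type localization exploiting the frequency support, the associated convolution kernels are expressed through the transition densities of $Z^{\overline{\tilde{\nu}_R}}$, which are controlled uniformly in $R$ under \textbf{A(w,l)} by \cite{cr}, and the $t$--integrals converge by \eqref{mu0} and \textbf{A(w,l)}(iv); this yields kernels with uniformly bounded $L^{1}$--norms --- indeed Lemma \ref{rep} already records $|L^{\tilde{\nu}_R,\kappa}\varphi|_{L^{1}}\le C'$ uniformly in $R$ --- and the same argument applies to the reciprocal multipliers. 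Passage from $C_b^{\infty}$ to general $u\in\tilde{C}^{\kappa+\beta}_{\infty,\infty}$ is by mollification and the uniform convergence \eqref{ext} of Lemma \ref{cont}.

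Finally I would assemble. Writing $u=(I-L^{\nu})^{-\kappa}(I-L^{\nu})^{\kappa}u$ and using that the supports of $\mathcal{F}\varphi_j$ have bounded overlap, each block $S_ku$ is controlled by the $O(1)$ neighbouring blocks of $(I-L^{\nu})^{\kappa}u$: the multiplier bound gives $|S_ku|_{0}\le Cw(N^{-k})^{\kappa}\sum_{|j-k|\le1}|S_j(I-L^{\nu})^{\kappa}u|_{0}\le Cw(N^{-k})^{\kappa+\beta}\|u\|_{\nu,\kappa,\beta}$, hence $|u|_{\kappa+\beta,\infty}\le C\|u\|_{\nu,\kappa,\beta}$; applying $(I-L^{\nu})^{\kappa}$ blockwise to $\sum_jS_ju$ reverses this, so $\|u\|_{\nu,\kappa,\beta}\asymp|u|_{\kappa+\beta,\infty}$. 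The identical scheme with $\psi^{\nu,\kappa}$ in place of $1-\psi^{\nu}$ --- absorbing the finitely many low--frequency blocks into $|u|_{0}$ (legitimate since $\|\varphi_j\|_{L^{1}}=\|\check{\phi}\|_{L^{1}}$ is bounded uniformly in $j$, so $|S_ju|_{0}\le C|u|_{0}$) and using the high--frequency non--vanishing of $\psi^{\nu,\kappa}$ established above --- gives $|u|_{\kappa+\beta,\infty}\le C|u|_{\nu,\kappa,\beta}$. Combined with the free direction from Lemma \ref{cont}, all three norms are equivalent.
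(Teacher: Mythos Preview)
The paper does not contain a proof of this proposition: it is quoted verbatim from \cite[Theorems 3.2 and 3.3]{mf} and stated without argument, so there is no in-paper proof to compare your proposal against.

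That said, your outline is consistent with the machinery the paper imports from \cite{mf} for exactly this purpose. The two-sided symbol comparison $-\Re\psi^{\nu}(\xi)\asymp w(1/|\xi|)^{-1}$ via the scaling identity and \textbf{A(w,l)}(i),(iii) is the expected input, and your plan to obtain uniform $L^{1}$ multiplier bounds by rescaling each block to a fixed annulus and then invoking the probabilistic representations of Lemmas \ref{rep}--\ref{rep2} (together with the density estimates of \cite{cr}, recorded here as Lemma \ref{lemma3}) is precisely the substitute for Mikhlin that \cite{mf} develops; Lemma \ref{rep} already states the needed uniform-in-$R$ bound $|L^{\tilde{\nu}_R,\kappa}\varphi|_{L^{1}}\le C'$. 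The block assembly via $(I-L^{\nu})^{\pm\kappa}$ and the handling of low frequencies through $|u|_{0}$ are routine once those ingredients are in place. One point to make explicit if you write this out in full: for $\kappa=1$ the operator $L^{\nu,\kappa}=L^{\nu}$ uses the full symbol $\psi^{\nu}$ rather than $-(-\Re\psi^{\nu})^{\kappa}$, so the inverse-direction multiplier bound at high frequency needs the lower bound on $|\psi^{\nu}|$, not just on $-\Re\psi^{\nu}$; you gesture at this but the argument that $|\Im\psi^{\nu}|\le C(-\Re\psi^{\nu})$ on the unit sphere (uniformly after rescaling) deserves a line, since without it $\psi^{\nu}$ could in principle vanish.
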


\section{Continuity of the Operator}

In this section, we study respectively operators that have a kernel depending on the spatial variable $x$ and operators that have space-dependent coefficients. The first lemma explains the relation between generalized regularity and the ordinary smoothness.
\begin{lemma}\label{deri}
	Let $\beta,\delta\in\left(0,\infty\right)$, $\sigma\in\left[0,1\right)$ and $k$ be a positive integer so that%
	\begin{equation*}
	\int_{0}^{1}l\left( t\right) ^{\beta }t^{-k-1}dt<\infty .
	\end{equation*}%

	a) Any function $u\in \tilde{C}_{\infty ,\infty }^{\beta }\left( \mathbf{R}^{d}\right) $ is $k$-times continuously differentiable and there is $C$ depending only on $N,\beta$ so that for any multi-index $\left\vert \gamma \right\vert \leq k$ and any $\sigma\in\left[0,1\right)$ with $\left\vert\gamma\right\vert+\sigma\leq k$,
	\begin{equation*}
	\left\vert \partial^{\sigma}D^{\gamma }u\right\vert _{0}\leq C\left\vert u\right\vert _{\beta,\infty }\int_{0}^{1}l\left( t\right) ^{\beta }t^{-\left\vert \gamma\right\vert-\sigma-1}dt.
	\end{equation*}%
	Moreover,%
	\begin{equation*}
	\partial^{\sigma}D^{\gamma }u=D^{\gamma }\partial^{\sigma}u=\sum_{j=0}^{\infty }\left( \partial^{\sigma}D^{\gamma }u\right) \ast \varphi _{j}
	\end{equation*}%
	converges uniformly.
	
	b) Any function $u\in \tilde{C}_{\infty ,\infty }^{\beta +\delta }\left( 
	\mathbf{R}^{d}\right) $ is $k$-times continuously differentiable and there
	is $C$ depending only on $N,\beta$ so that for any multi-index $\left\vert \gamma \right\vert \leq k$ and any $\sigma\in\left[0,1\right)$ with $\left\vert\gamma\right\vert+\sigma\leq k$,
	\begin{equation*}
	\left\vert \partial^{\sigma}D^{\gamma }u\right\vert _{\delta,\infty }\leq C\left\vert u\right\vert
	_{\beta +\delta,\infty }\int_{0}^{1}l\left( t\right) ^{\beta }t^{-\left\vert \gamma\right\vert-\sigma-1}dt.
	\end{equation*}
\end{lemma}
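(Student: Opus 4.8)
\textbf{Proof proposal for Lemma \ref{deri}.}

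The plan is to exploit the Littlewood--Paley characterization $|u|_{\beta,\infty}=\sup_j w(N^{-j})^{-\beta}|u\ast\varphi_j|_0$ together with Bernstein-type inequalities for band-limited functions. The key quantitative input is that each $u\ast\varphi_j$ has Fourier transform supported in an annulus of size $\sim N^j$ (for $j\ge 1$), so that $|\partial^\sigma D^\gamma(u\ast\varphi_j)|_0\le C N^{j(|\gamma|+\sigma)}|u\ast\varphi_j|_0$. The fractional derivative $\partial^\sigma$ here is understood as the Fourier multiplier $(i2\pi\xi)^\gamma|2\pi\xi|^\sigma$ or a directional analogue, and the Bernstein inequality for it follows by writing $\partial^\sigma D^\gamma(u\ast\varphi_j)=u\ast(\partial^\sigma D^\gamma\tilde\varphi_j)$ where $\tilde\varphi_j$ is a fattened version of $\varphi_j$ (equal to $\varphi_j$ on its spectral support), rescaling to unit frequency, and using $\|\partial^\sigma D^\gamma\tilde\varphi_0\|_{L^1}<\infty$. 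I would then estimate
\begin{eqnarray*}
\sum_{j\ge 0}|\partial^\sigma D^\gamma(u\ast\varphi_j)|_0 &\le& C|u|_{\beta,\infty}\sum_{j\ge 0}N^{j(|\gamma|+\sigma)}w(N^{-j})^{\beta}.
\end{eqnarray*}
To bound the series by the stated integral, note that the scaling condition \eqref{scale} gives $w(N^{-j})\le l(N^{-1})^{\,j-m}w(N^{-m})$-type comparisons, but more directly one uses $w(N^{-j})\le l(N^{-j})w(1)=l(N^{-j})$ and compares the sum $\sum_j N^{j(|\gamma|+\sigma)}l(N^{-j})^{\beta}$ with the integral $\int_0^1 l(t)^{\beta}t^{-|\gamma|-\sigma-1}dt$ via the substitution $t=N^{-s}$ and monotonicity of $l$ on each dyadic block: for $t\in[N^{-j-1},N^{-j}]$ one has $l(t)^\beta t^{-|\gamma|-\sigma-1}\ge c\, l(N^{-j-1})^\beta N^{j(|\gamma|+\sigma)}$, and since $l(N^{-j-1})\ge l(N^{-j})l(N^{-1})^{-1}$... wait, $l$ is nondecreasing so $l(N^{-j-1})\le l(N^{-j})$; the correct direction is to bound the sum by $C\int_0^1$ using that $l$ is nondecreasing, giving $l(N^{-j})^\beta\le C\int_{N^{-j-1}}^{N^{-j}}l(t)^\beta t^{-1}dt\cdot(\log N)^{-1}\cdot(\text{correction})$ — the precise block estimate is routine and I will not grind through it. Convergence of the series is exactly the hypothesis $\int_0^1 l(t)^\beta t^{-k-1}dt<\infty$ together with $|\gamma|+\sigma\le k$.

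Once the series $\sum_j \partial^\sigma D^\gamma(u\ast\varphi_j)$ converges absolutely and uniformly, its sum is a continuous function; to identify it as $\partial^\sigma D^\gamma u$ I would argue that the partial sums $S_n=\sum_{j\le n}u\ast\varphi_j$ converge uniformly to $u$ (since $\sum_j |u\ast\varphi_j|_0\le C|u|_{\beta,\infty}\int_0^1 l(t)^\beta t^{-1}dt<\infty$ when $|\gamma|=\sigma=0$ is included in the hypothesis via $k\ge 1$... actually one needs $k\ge 1$ so $\int_0^1 l(t)^\beta t^{-2}dt<\infty$ which dominates $\int_0^1 l(t)^\beta t^{-1}dt$), while $\partial^\sigma D^\gamma S_n$ converges uniformly to the claimed sum; a standard theorem on differentiation of uniformly convergent series (and its fractional/Fourier-multiplier analogue, which here is legitimate because everything is a finite sum of Schwartz-class convolutions at each level) then gives $\partial^\sigma D^\gamma u=\sum_j \partial^\sigma D^\gamma(u\ast\varphi_j)$ with the series converging uniformly, and in particular $u\in C^k$ when $\sigma=0$. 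This yields part a) with the stated constant depending only on $N$ and $\beta$ (the dependence on $d$ being absorbed or suppressed as per the paper's convention).

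For part b), the argument is the same but one keeps track of the Besov norm rather than the sup norm of the output. One shows $\partial^\sigma D^\gamma u\in\tilde C^\delta_{\infty,\infty}$ by estimating, for each fixed $m$,
\begin{eqnarray*}
w(N^{-m})^{-\delta}\big|(\partial^\sigma D^\gamma u)\ast\varphi_m\big|_0.
\end{eqnarray*}
Using the almost-orthogonality of the Littlewood--Paley blocks, $(\partial^\sigma D^\gamma u)\ast\varphi_m$ only sees the frequencies near $N^m$, so it equals $\partial^\sigma D^\gamma(u\ast\tilde\varphi_m)\ast\varphi_m$ where $\tilde\varphi_m=\varphi_{m-1}+\varphi_m+\varphi_{m+1}$; applying the Bernstein inequality at scale $N^m$ gives $|(\partial^\sigma D^\gamma u)\ast\varphi_m|_0\le C N^{m(|\gamma|+\sigma)}|u\ast\tilde\varphi_m|_0\le C N^{m(|\gamma|+\sigma)}w(N^{-m})^{\beta+\delta}|u|_{\beta+\delta,\infty}$. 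Dividing by $w(N^{-m})^\delta$ and using $N^{m(|\gamma|+\sigma)}w(N^{-m})^{\beta}\le C\int_0^1 l(t)^\beta t^{-|\gamma|-\sigma-1}dt$ uniformly in $m$ (this uniform-in-$m$ bound, as opposed to a sum over $m$, is why the integral appears without the series) finishes the estimate.

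The main obstacle I anticipate is the careful treatment of the fractional derivative $\partial^\sigma$ for non-integer $\sigma$: one must fix a precise definition (a one-dimensional Riesz-type multiplier in a distinguished direction, or a radial $|2\pi\xi|^\sigma$ factor combined with $D^\gamma$) and verify that the corresponding kernel, after localization to a frequency annulus, has $L^1$ norm scaling like $N^{j\sigma}$ — i.e. that $\partial^\sigma\tilde\varphi_0\in L^1$ with the rescaling $\partial^\sigma\tilde\varphi_j(x)=N^{j(d+\sigma)}(\partial^\sigma\tilde\varphi_0)(N^j x)$ producing exactly the factor $N^{j\sigma}$ after integration. This is where the restriction $\sigma\in[0,1)$ and the band-limited structure matter, and where one must be slightly careful that the commutation $\partial^\sigma D^\gamma u=D^\gamma\partial^\sigma u$ holds in the classical (not merely distributional) sense, which again follows from the uniform convergence of the block decompositions established above. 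Everything else is the standard Littlewood--Paley/Bernstein bookkeeping adapted from the Euclidean case by replacing $2^{-j}$-powers with $w(N^{-j})$-weights, and is routine given Proposition \ref{thm4} and the norm-equivalence machinery of \cite{mf}.
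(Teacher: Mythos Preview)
Your proposal is correct and follows essentially the same route as the paper: write $u=\sum_j u\ast\varphi_j=\sum_j \tilde\varphi_j\ast u\ast\varphi_j$ with the fattened pieces $\tilde\varphi_j$, apply the Bernstein-type bound $|\partial^\sigma D^\gamma(\tilde\varphi_j\ast u\ast\varphi_j)|_0\le C N^{j(|\gamma|+\sigma)}|u\ast\varphi_j|_0$ obtained by rescaling $\partial^\sigma D^\gamma\tilde\varphi$ to unit frequency, and then compare the resulting series $\sum_j w(N^{-j})^\beta N^{j(|\gamma|+\sigma)}$ to the integral $\int_0^1 l(t)^\beta t^{-|\gamma|-\sigma-1}\,dt$. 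For part b) the paper does exactly your single-block almost-orthogonality estimate. The only cosmetic difference is in identifying the limit: the paper passes through convergence in $\mathcal{S}'$ and continuity of the Fourier transform, whereas you invoke the classical theorem on term-by-term differentiation of uniformly convergent series; both are fine here. Your hesitation about the sum-to-integral comparison is unnecessary---the clean way (and what the paper does) is to use $w(N^{-j})\le l(N^{-j})w(1)$ and then, for $x\in[j-1,j]$, the monotonicity $l(N^{-j})\le l(N^{-x})$ together with $(N^{-j})^{-|\gamma|-\sigma}\le N^{|\gamma|+\sigma}(N^{-x})^{-|\gamma|-\sigma}$, which bounds the $j$-th summand by $C\int_{j-1}^{j}l(N^{-x})^\beta(N^{-x})^{-|\gamma|-\sigma}\,dx$.
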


\begin{proof}
	Recall properties of the convolution functions $\varphi_j, j\in\mathbf{N}$. If we write
	\begin{eqnarray*}
		&& \tilde{\varphi_j}=\varphi_{j-1}+\varphi_{j}+\varphi_{j+1}, j\geq 2,\label{ssch}\\
		&& \tilde{\varphi}_1=\check{\phi}+\varphi_{1}+\varphi_{2},\quad \tilde{\varphi}_0=\varphi_{0}+\varphi_{1},\label{sch}
	\end{eqnarray*}
	then, 
	\begin{eqnarray*}\label{schw}
		\mathcal{F}\tilde{\varphi}_j\left( \xi\right)=\hat{\tilde{\varphi}}_j\left( \xi\right)=\mathcal{F}\tilde{\varphi}\left( N^{-j}\xi\right), \quad \xi\in\mathbf{R}^d, j\geq 1,
	\end{eqnarray*}
	where
	\begin{eqnarray*}\label{schwa}
		\mathcal{F}\tilde{\varphi}\left( \xi\right)=\phi\left( N\xi\right)+\phi\left( \xi\right)+\phi\left( N^{-1}\xi\right).
	\end{eqnarray*}
	Note that $\phi$ is necessarily $0$ on the boundary of its support. Then,
	\begin{eqnarray*}\label{convol}
		\varphi_j &=& \varphi_j\ast\tilde{\varphi}_j, j\geq 0,\\
		\tilde{\varphi}_j\left( x\right) &=& N^{jd}\tilde{\varphi}\left( N^{j}x\right), j\geq 1.
	\end{eqnarray*}
	And then,
	\begin{eqnarray*}
		u=\sum_{j=0}^{\infty }u\ast \varphi _{j}=\sum_{j=0}^{\infty }\tilde{\varphi}_{j}\ast
		u\ast \varphi _{j}.
	\end{eqnarray*}
	
	a) We only show cases in which $\left\vert \gamma \right\vert =1$. The proof for other higher orders is an application of induction on $\gamma$. Denote%
	\begin{eqnarray*}
		\left( \partial^{\sigma}D^{\gamma }\tilde{\varphi}\right)_{j}\left( x\right) &=& N^{jd}\left(\partial^{\sigma}
		D^{\gamma }\tilde{\varphi}\right) \left( N^{j}x\right) ,x\in \mathbf{R}^{d}, j\geq 1.
	\end{eqnarray*}%
	Then%
	\begin{equation*}
	\sum_{j=1}^{\infty }\partial^{\sigma}D^{\gamma }\left(\tilde{\varphi}_{j}\ast
	u\ast \varphi _{j}\right) =\sum_{j=1}^{\infty }N^{\left(\left\vert \gamma \right\vert+\sigma\right) j}\left(\partial^{\sigma} D^{\gamma }%
	\tilde{\varphi}\right) _{j}\ast u\ast \varphi _{j}.
	\end{equation*}%
	
	Since
	\begin{equation*}
	\sum_{j=0}^{\infty }\frac{w\left( N^{-j}\right) ^{\beta }}{\left(
		N^{-j}\right)^{\left\vert \gamma \right\vert+\sigma}}\leq w\left( N\right)^{\beta}\int_{0}^{\infty }\frac{l\left( N^{-x}\right)
		^{\beta }}{\left( N^{-x}\right)^{\left\vert \gamma \right\vert+\sigma}}dx\leq C\int_{0}^{1}\frac{l\left( t\right)
		^{\beta }}{t^{\left\vert \gamma \right\vert+\sigma}}\frac{dt}{t}<\infty,
	\end{equation*}
	we have
	\begin{eqnarray*}
		\sum_{j=0}^{\infty }\left\vert \partial^{\sigma}D^{\gamma }\left(\tilde{\varphi}_{j}\ast
		u\ast \varphi _{j}\right) \right\vert_{0}
		&\leq& C\sum_{j=0}^{\infty }w\left( N^{-j}\right) ^{\beta }N^{\left(\left\vert \gamma \right\vert+\sigma\right) j}w\left( N^{-j}\right) ^{-\beta }\left\vert u\ast\varphi_j\right\vert_{0}\\
		&\leq& C\left\vert u\right\vert_{\beta,\infty }\int_{0}^{1}l\left( t\right) ^{\beta }t^{-\left\vert \gamma\right\vert-\sigma-1}dt<\infty.
	\end{eqnarray*}
	Therefore, $\sum_{j=0}^{\infty }\partial^{\sigma}D^{\gamma }\left(\tilde{\varphi}_{j}\ast
	u\ast \varphi _{j}\right)\in C\left( \mathbf{R}^d\right)$ converges uniformly and therefore it converges in the weak topology of $\mathcal{S}'\left( \mathbf{R}\right)$. By continuity of the Fourier transform,
	\begin{equation*}
	D^{\gamma }\partial^{\sigma}u=\partial^{\sigma}D^{\gamma }u=\sum_{j=0}^{\infty }\partial^{\sigma}D^{\gamma }\left(\tilde{\varphi}_{j}\ast
	u\ast \varphi _{j}\right) =\sum_{j=0}^{\infty }\left( \partial^{\sigma}D^{\gamma }u\right)\ast \varphi _{j}.
	\end{equation*}
	Moreover,
	\begin{eqnarray*}
		\left\vert \partial^{\sigma}D^{\gamma }u\right\vert_{0}\leq \sum_{j=0}^{\infty }\left\vert \partial^{\sigma}D^{\gamma }\left(\tilde{\varphi}_{j}\ast
		u\ast \varphi _{j}\right) \right\vert_{0}\leq C\left\vert u\right\vert_{\beta,\infty }\int_{0}^{1}l\left( t\right) ^{\beta }t^{-\left\vert \gamma\right\vert-\sigma-1}dt.
	\end{eqnarray*}
	b) From a),
	\begin{eqnarray*}
		&&w\left( N^{-j}\right) ^{-\delta }\left\vert\left( \partial^{\sigma}D^{\gamma}
		u\right)\ast \varphi _{j} \right\vert_{0}\\
		&\leq& C\sum_{j=0}^{\infty }w\left( N^{-j}\right) ^{\beta }N^{\left(\left\vert \gamma \right\vert+\sigma\right) j}w\left( N^{-j}\right) ^{-\beta-\delta }\left\vert u\ast\varphi_j\right\vert_{0}\\
		&\leq& C\left\vert u\right\vert_{\beta+\delta,\infty }\int_{0}^{1}l\left( t\right) ^{\beta }t^{-\left\vert \gamma\right\vert-\sigma-1}dt, \enskip \forall j\in\mathbf{N}.
	\end{eqnarray*}
	And the conclusion follows.
\end{proof}
\noindent\textbf{Remark:} As a conclusion of Lemma \ref{deri} and \textbf{$\mathbf{\tilde{A}}(w,l,\gamma)$} (iii), if $u\in\tilde{C}_{\infty ,\infty }^{1+\beta }\left(\mathbf{R}^d\right),\beta> 0$ and $\alpha\in\left[1,2\right)$, then $u$ has classical first-order derivatives.

\begin{lemma}\label{ato}
	Let $\kappa\in\left(0,2\right)$ and $\mu$ be the reference measure. Then for any function $\varphi\in C^{\infty}_b\left( \mathbf{R}^d\right)$,
	\begin{eqnarray*}
		\left( aI-L^{\mu}\right)^{\kappa}\varphi&\rightarrow& -L^{\mu,\kappa}\varphi, \kappa\in\left(0,1\right] ,\\
		\left( aI-L^{\mu}\right)^{\kappa}\varphi&\rightarrow& L^{\mu,\kappa}\varphi, \kappa\in\left(1,2\right) .
	\end{eqnarray*}
	uniformly as $a\rightarrow 0_{+}$.
\end{lemma}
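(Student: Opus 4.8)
The plan is to establish the claimed convergences $(aI-L^\mu)^\kappa\varphi\to -L^{\mu,\kappa}\varphi$ for $\kappa\in(0,1]$ and $(aI-L^\mu)^\kappa\varphi\to L^{\mu,\kappa}\varphi$ for $\kappa\in(1,2)$ by comparing the explicit probabilistic representations from Lemma \ref{rep} and Lemma \ref{rep2} term by term, bounding the difference uniformly in $x$. For $\kappa\in(0,1)$ I would subtract \eqref{rp1} (with $\nu=\mu$, $a>0$) from the $a=0$ version of the representation in Lemma \ref{rep} for $L^{\mu,\kappa}$, noting first that $-L^{\mu,\kappa}\varphi(x)=C\int_0^\infty t^{-\kappa-1}[\varphi(x)-\mathbf{E}\varphi(x+Z_t^{\bar\mu})]\,dt$ with the same normalizing constant $C^{-1}=\int_0^\infty t^{-\kappa-1}(1-e^{-t})\,dt$, so that
\begin{eqnarray*}
	(aI-L^\mu)^\kappa\varphi(x)-(-L^{\mu,\kappa}\varphi(x))
	= C\int_0^\infty t^{-\kappa-1}\bigl(1-e^{-at}\bigr)\mathbf{E}\varphi\bigl(x+Z_t^{\bar\mu}\bigr)\,dt.
\end{eqnarray*}
The task then reduces to showing this integral tends to $0$ uniformly in $x$ as $a\to 0_+$.

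The estimate splits at $t=1$ (or at any fixed threshold). On $(0,1]$, one uses $0\le 1-e^{-at}\le at$ together with $\lvert\mathbf{E}\varphi(x+Z_t^{\bar\mu})\rvert\le\lvert\varphi\rvert_0$, giving a bound $Ca\lvert\varphi\rvert_0\int_0^1 t^{-\kappa}\,dt=Ca\lvert\varphi\rvert_0/(1-\kappa)$, which is $O(a)$; a slight refinement for the short-time part is unnecessary here since $\kappa<1$ makes $\int_0^1 t^{-\kappa}\,dt$ finite. On $[1,\infty)$, one uses $0\le 1-e^{-at}\le 1$ but must exploit decay: the key point is that $\mathbf{E}\varphi(x+Z_t^{\bar\mu})\to 0$ is not available for bounded $\varphi$, so instead I would write $\int_1^\infty t^{-\kappa-1}(1-e^{-at})\,dt$ and observe that by dominated convergence (dominating function $t^{-\kappa-1}$, which is integrable on $[1,\infty)$ since $\kappa>0$) this integral tends to $0$ as $a\to 0_+$; multiplying by the uniform bound $\lvert\varphi\rvert_0$ gives a bound independent of $x$. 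Hence the whole difference is $\le Ca\lvert\varphi\rvert_0 + \lvert\varphi\rvert_0\cdot o_a(1)\to 0$ uniformly in $x$.

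For $\kappa=1$ the statement is elementary: $(aI-L^\mu)\varphi=a\varphi-L^\mu\varphi$ and $-L^{\mu,1}\varphi=-L^\mu\varphi$ (directly from the definition $\psi^{\mu,1}=\psi^\mu$ and $L^{\mu,1}=\mathcal F^{-1}[\psi^\mu\mathcal F\cdot]=L^\mu$ on $C_b^\infty$), so the difference is $a\varphi$, which converges to $0$ uniformly by $\lvert a\varphi\rvert_0=a\lvert\varphi\rvert_0$. For $\kappa\in(1,2)$ I would use the composition structure noted in the Remark after Lemma \ref{rep2}: writing $\kappa=1+\kappa'$ with $\kappa'\in(0,1)$, one has $(aI-L^\mu)^\kappa=(aI-L^\mu)\circ(aI-L^\mu)^{\kappa'}$, and by Lemma \ref{rep2} $(aI-L^\mu)^{\kappa'}\varphi\in C_b^\infty$ with derivatives bounded uniformly in $a\le 1$ (one needs a uniform bound here, which follows by differentiating \eqref{rp1} under the integral sign and using the same split). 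Then
\begin{eqnarray*}
	(aI-L^\mu)^\kappa\varphi - L^{\mu,\kappa}\varphi
	= a\,(aI-L^\mu)^{\kappa'}\varphi \;-\; L^\mu\bigl[(aI-L^\mu)^{\kappa'}\varphi - (-L^{\mu,\kappa'}\varphi)\bigr],
\end{eqnarray*}
using $L^{\mu,\kappa}=L^\mu L^{\mu,\kappa'}$ (sign bookkeeping: $L^{\mu,\kappa}=-L^\mu\cdot(-L^{\mu,\kappa'})=L^\mu L^{\mu,\kappa'}$ matches the claimed positive sign). The first term is $O(a)$ in sup norm by the uniform bound, and the second term is $L^\mu$ applied to something converging to $0$ in $C_b^2$; since $L^\mu$ maps $C_b^2$ into $C_b$ continuously (by \eqref{op}, splitting the $\nu$-integral at $\lvert y\rvert=1$ and using the second-order Taylor remainder together with $\int_{|y|\le1}|y|^2\mu(dy)<\infty$), it too tends to $0$ uniformly.

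The main obstacle is the large-time ($[1,\infty)$) control in the $\kappa\in(0,1)$ case, since one cannot exploit decay of $\mathbf{E}\varphi(x+Z_t^{\bar\mu})$ for a merely bounded $\varphi$; the resolution is to discard the expectation entirely and absorb it into $\lvert\varphi\rvert_0$, then obtain the vanishing purely from $\int_1^\infty t^{-\kappa-1}(1-e^{-at})\,dt\to 0$ — this is where $\kappa>0$ is essential. A secondary technical point, needed only for $\kappa\in(1,2)$, is verifying that $(aI-L^\mu)^{\kappa'}\varphi$ and its first and second derivatives are bounded uniformly for $a\in(0,1]$; this is a routine differentiation-under-the-integral argument on the representation \eqref{rp1}, using that $\varphi\in C_b^\infty$ and that the relevant $t$-integrals converge uniformly in $a$.
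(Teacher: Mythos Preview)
Your argument for $\kappa\in(0,1]$ is correct and essentially coincides with the paper's: both subtract the two integral representations and bound the remainder by splitting the $t$-integral. The paper first extracts the constant term $a^\kappa\varphi(x)$ from \eqref{rp1} before comparing, which is algebraically equivalent to your direct subtraction.

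For $\kappa\in(1,2)$ the two proofs diverge. You decompose $\kappa=1+\kappa'$ and use that $L^\mu:C_b^2\to C_b$ is continuous, which forces you to upgrade the $\kappa'\in(0,1)$ convergence from sup-norm to $C_b^2$; this does follow by differentiating under the integral and applying the $\kappa'$-case to $\partial^\gamma\varphi$, but you only state uniform boundedness, not convergence of derivatives. The paper instead writes $\kappa=2\kappa'$ with $\kappa'\in(1/2,1)$ and exploits that the $\kappa'$-case actually yields the quantitative bound $\bigl|(aI-L^\mu)^{\kappa'}\psi+L^{\mu,\kappa'}\psi\bigr|_0\le\varepsilon|\psi|_0$ for \emph{every} $\psi\in C_b^\infty$ once $a$ is small; since $L^{\mu,\kappa'}$ preserves $C_b^\infty$, one simply iterates this bound in the algebraic identity for the square, never leaving the sup-norm. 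Besides being more economical, the paper's route avoids a definitional subtlety in yours: in the paper's conventions $(aI-L^\mu)^{\kappa'}$ for $\kappa'\in(0,1)$ is built from the symmetrized symbol $a-\Re\psi^\mu$, while $aI-L^\mu$ itself has symbol $a-\psi^\mu$, so the composition $(aI-L^\mu)\circ(aI-L^\mu)^{\kappa'}$ is not literally the operator $(aI-L^\mu)^{1+\kappa'}$ unless $\mu$ is symmetric. The $\kappa=2\kappa'$ decomposition stays on the symmetrized side throughout and this issue never arises.
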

\begin{proof}
	Apparently, $\left( aI-L^{\mu}\right)\varphi\left( x\right)\rightarrow -L^{\mu}\varphi\left( x\right)$ uniformly as $a\rightarrow 0$. Use the representation $\eqref{rp1}$ for $\kappa\in\left(0,1\right)$:
	\begin{eqnarray*}
		\left( aI-L^{\mu}\right)^{\kappa}\varphi\left(x\right)&=&C\left(\kappa\right)\int_0^{\infty}t^{-\kappa-1}e^{-at}\left[ \varphi\left(x\right)-\mathbf{E}\varphi\left( x+Z_t^{\bar{\mu}}\right)\right]dt+a^{\kappa}\varphi\left(x\right),
	\end{eqnarray*}	
	where $C\left(\kappa\right)^{-1}=\int_0^{\infty}t^{-\kappa-1}\left(1- e^{-t}\right)dt$. Note $\eqref{kap}$, then
	\begin{eqnarray*}
		&&\left\vert\left( aI-L^{\mu}\right)^{\kappa}\varphi\left(x\right)+L^{\mu,\kappa}\varphi\left(x\right)\right\vert\\
		&\leq&C\left(\kappa\right)\int_0^{\infty}t^{-\kappa-1}\left\vert e^{-at}-1\right\vert \left\vert \varphi\left(x\right)-\mathbf{E}\varphi\left( x+Z_t^{\bar{\mu}}\right)\right\vert dt+a^{\kappa}\left\vert \varphi\left(x\right)\right\vert\\
		&\leq& 2C\left(\kappa\right)\left\vert \varphi\right\vert_0 \left[ a^{\kappa}\int_0^{a}t^{-\kappa-1}\left(1-e^{-t}\right)dt +\int_1^{\infty}t^{-\kappa-1}\left(1-e^{-at}\right)dt+a^{\kappa}\right] \\
		&\to& 0 \mbox{ uniformly as } a\to 0_{+}, \forall x\in\mathbf{R}^d.
	\end{eqnarray*}	
	 To be precise, for any $\varepsilon>0$, there is $\delta>0$ such that
	 \begin{eqnarray*}
	 \left\vert\left( aI-L^{\mu}\right)^{\kappa}\varphi+L^{\mu,\kappa}\varphi\right\vert_0<\varepsilon	\left\vert \varphi\right\vert_0 
	 \end{eqnarray*}
	 whenever $0<a<\delta$. Besides, 
	\begin{eqnarray*}
		&&\left( aI-L^{\mu}\right)^{2\kappa}\varphi-L^{\mu,2\kappa}\varphi\\
		&=& \left[\left( aI-L^{\mu}\right)^{\kappa}+L^{\mu,\kappa}\right]\circ \left[\left( aI-L^{\mu}\right)^{\kappa}+L^{\mu,\kappa}\right]\circ \varphi\\
		&-&2\left( aI-L^{\mu}\right)^{\kappa}\circ L^{\mu,\kappa}\varphi-2L^{\mu,2\kappa}\varphi.
	\end{eqnarray*}

	By arguments above, when $0<a<\delta$,
	\begin{eqnarray*}
	\left\vert \left[\left( aI-L^{\mu}\right)^{\kappa}+L^{\mu,\kappa}\right]\circ \left[\left( aI-L^{\mu}\right)^{\kappa}+L^{\mu,\kappa}\right]\circ \varphi\right\vert_0\leq\varepsilon^2\left\vert \varphi\right\vert_0,
	\end{eqnarray*}
and 
	\begin{eqnarray*}
		-2\left( aI-L^{\mu}\right)^{\kappa}\circ L^{\mu,\kappa}\varphi-2L^{\mu,2\kappa}\varphi&\to& 0 \mbox{ uniformly as } a\to 0_{+}.
	\end{eqnarray*}
	Therefore,
		\begin{eqnarray*}
		\left( aI-L^{\mu}\right)^{2\kappa}\varphi&\to& L^{\mu,2\kappa}\varphi \mbox{ uniformly as } a\to 0_{+}.
	\end{eqnarray*}
\end{proof}

The following derivation is needed in next two lemmas. Given $\eqref{alpha1}$,  
\begin{equation*}
\psi^{\mu}\left( \xi\right) = w\left( R\right)^{-1}\psi^{\tilde{\mu}_{R}}\left( R\xi\right), \xi\in\mathbf{R}^d, \forall R\in\mathbf{R}_{+}.
\end{equation*}
Using the L\'{e}vy-Khintchine formula, we obtain
\begin{equation*}
p\left( t,z\right) = R^{-d} p^R \left( w\left( R\right)^{-1}t, R^{-1}z\right), z\in\mathbf{R}^d, \forall R\in\mathbf{R}_{+},
\end{equation*}
where $p\left( t,z\right), z\in\mathbf{R}^d$ denotes the density function of $Z_{t}^{\mu}$ if $\kappa=1$ and that of $Z_{t}^{\bar{\mu}}$ if $\kappa\in\left( 0,1\right)\cup\left(1,2\right)$, $p^R\left( t,z\right), z\in\mathbf{R}^d$ denotes the density of $Z_{t}^R:=Z_{t}^{\tilde{\mu}_R}$ if $\kappa=1$ and $Z_{t}^{\tilde{\bar{\mu}}_R}$ otherwise. Existence of $p^R\left( t,z\right)$ is guaranteed by Lemma \ref{lemma3} in Appendix.

\begin{lemma}\label{dif}
	Let $\kappa\in\left(0,2\right),\beta\in\left( 0,\infty\right)$ and $\mu$ be the reference measure. Assume
	\begin{eqnarray}\label{ass1}
	\int_{1}^{\infty }t^{\kappa-1}\gamma\left( t\right)^{-1}dt<\infty.
	\end{eqnarray}
	Then for any function $\varphi\in \tilde{C}^{\kappa+\beta}_{\infty,\infty}\left( \mathbf{R}^d\right)$ and any $R>0$,
	\begin{eqnarray}
\varphi\left( x+y\right)-\varphi\left( x\right)
	&=&C\left(\kappa \right)w\left( R\right)^{\kappa}\int_{0}^{\infty }t^{\kappa-1}\int L^{\mu,\kappa}\varphi\left( x+R z\right)\nonumber\\
	&&\cdot\left[ p^{R}\left( t,z-R^{-1}y\right)-p^{R}\left( t,z\right)\right]dzdt,\quad\quad\label{tran21}
	\end{eqnarray}
	where $p^{R}\left( t,x\right), x\in\mathbf{R}^d$ follows the definition above. In particular,
	\begin{eqnarray}
	\left\vert\varphi\left( x+y\right)-\varphi\left( x\right) \right\vert\leq C w\left( \left\vert y\right\vert\right)^{\kappa}\left\vert L^{\mu,\kappa}\varphi\right\vert_0,\forall x,y\in\mathbf{R}^d. \label{mod1}
	\end{eqnarray}
\end{lemma}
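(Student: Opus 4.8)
The plan is to derive \eqref{tran21} from the bijectivity results of Lemmas \ref{bij}--\ref{rep2} applied to the scaled operators, together with the scaling relations for $\psi^{\mu}$ and the density $p^R$ displayed just above, and then to obtain the pointwise bound \eqref{mod1} by optimizing \eqref{tran21} over the free parameter $R$. The key point is that \eqref{tran21} holds for \emph{every} $R>0$, so one may take $R=\left\vert y\right\vert$ (or $R$ comparable to it) to convert the factor $w\left( R\right)^{\kappa}$ into $w\left( \left\vert y\right\vert\right)^{\kappa}$, after checking that the $z$- and $t$-integrals of $\left\vert p^R\left( t,z-R^{-1}y\right)-p^R\left( t,z\right)\right\vert$ are bounded uniformly in $R$ under hypothesis \eqref{ass1}.

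\textbf{Step 1: Reduce to the representation on scaled variables.} First I would fix $R>0$ and change variables $x\mapsto Rx$, $y\mapsto Ry$, reducing the claimed identity to a statement about $\tilde{\mu}_R$ (or its symmetrization) and the process $Z_t^R$. Write $\varphi_R\left( x\right)=\varphi\left( Rx\right)$. Using $\psi^{\mu}\left( \xi\right)=w\left( R\right)^{-1}\psi^{\tilde{\mu}_R}\left( R\xi\right)$ and the corresponding identity for $\Re\psi^{\mu}$, the operator $L^{\mu,\kappa}$ acting on $\varphi$ at $Rx$ relates to $L^{\tilde{\mu}_R,\kappa}$ acting on $\varphi_R$ at $x$ by a factor $w\left( R\right)^{-\kappa}$; this is the source of the $w\left( R\right)^{\kappa}$ in \eqref{tran21}. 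Since $\varphi\in\tilde{C}^{\kappa+\beta}_{\infty,\infty}$, Lemma \ref{cont} guarantees $L^{\mu,\kappa}\varphi\in C\left( \mathbf{R}^d\right)\cap L^{\infty}$, so all integrals below make sense.

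\textbf{Step 2: Invert the operator via the probabilistic representation.} Next, for $\varphi\in C_b^{\infty}$ one has by Lemma \ref{rep2} (extended to $\kappa\in\left(1,2\right)$ per the Remark after Lemma \ref{rep2}, using \eqref{rp2} with $a\to 0_+$ in the spirit of Lemma \ref{ato}) the formula $\varphi=C'\int_0^{\infty}t^{\kappa-1}\mathbf{E}\left[\left( L^{\mu,\kappa}\varphi\right)\left( \cdot+Z_t^{\bar\mu}\right)\right]dt$, i.e. $\varphi$ is recovered from $L^{\mu,\kappa}\varphi$ by convolving against $\int_0^{\infty}t^{\kappa-1}p\left( t,\cdot\right)dt$. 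Forming the difference $\varphi\left( x+y\right)-\varphi\left( x\right)$ kills the issue of whether that integral converges absolutely at $t=\infty$ — the divergent constant part cancels — and expressing $p\left( t,\cdot\right)$ through $p^R$ via $p\left( t,z\right)=R^{-d}p^R\left( w\left( R\right)^{-1}t,R^{-1}z\right)$, then substituting $t\mapsto w\left( R\right)t$ and $z\mapsto Rz$, produces exactly \eqref{tran21}. To pass from $C_b^{\infty}$ to general $\varphi\in\tilde{C}^{\kappa+\beta}_{\infty,\infty}$ I would mollify, apply the identity to $\varphi_n$, and pass to the limit using the uniform convergence in \eqref{ext} of Lemma \ref{cont} on the right and uniform continuity of $\varphi$ on the left.

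\textbf{Step 3: The uniform density bound and the choice $R\sim\left\vert y\right\vert$.} The main obstacle is Step 3: bounding $\int_0^{\infty}t^{\kappa-1}\int\left\vert p^R\left( t,z-R^{-1}y\right)-p^R\left( t,z\right)\right\vert dz\,dt$ by a constant \emph{independent of $R$}, and then choosing $R$ to extract $w\left( \left\vert y\right\vert\right)^{\kappa}$. The small-$t$ part is handled by the trivial estimate $\int\left\vert p^R\left( t,z-R^{-1}y\right)-p^R\left( t,z\right)\right\vert dz\leq 2$ together with $\int_0^{1}t^{\kappa-1}dt<\infty$; the large-$t$ part is where \eqref{ass1} enters — one uses the regularity estimates for $p^R$ from \cite{cr} (uniform in $R$ under \textbf{A(w,l)}, cf. Lemma \ref{lemma3}) to get $\int\left\vert p^R\left( t,z-h\right)-p^R\left( t,z\right)\right\vert dz\leq C\left(\left\vert h\right\vert/\gamma\left( t\right)\right)\wedge 1$ type control, so that after taking $R=\left\vert y\right\vert$, $R^{-1}y$ is a unit vector and the $t$-integral becomes $\int_1^{\infty}t^{\kappa-1}\left(\gamma\left( t\right)^{-1}\wedge 1\right)dt<\infty$ by \eqref{ass1}. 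With $R=\left\vert y\right\vert$ the prefactor $w\left( R\right)^{\kappa}$ becomes $w\left( \left\vert y\right\vert\right)^{\kappa}$, and pulling $\left\vert L^{\mu,\kappa}\varphi\right\vert_0$ out of the integral gives \eqref{mod1}. I expect the bookkeeping of the exact form of the density gradient bound — and verifying it is genuinely $R$-uniform — to be the only delicate point; everything else is substitution and dominated convergence.
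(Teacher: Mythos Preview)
Your proposal is correct and follows essentially the same route as the paper: start from the representation \eqref{rp2} for $(aI-L^{\mu})^{-\kappa}$ applied to $(aI-L^{\mu})^{\kappa}\varphi$ with $a>0$, form the difference $\varphi(x+y)-\varphi(x)$, let $a\to 0_+$ via Lemma \ref{ato} (using Lemma \ref{lemma3} and \eqref{ass1} to justify dominated convergence for the large-$t$ integral), then perform the scaling substitution $z\mapsto Rz$, $t\mapsto w(R)t$ to obtain \eqref{tran21}, extend from $C_b^{\infty}$ to $\tilde{C}^{\kappa+\beta}_{\infty,\infty}$ by approximation through Lemma \ref{cont}, and finally set $R=\lvert y\rvert$ so that $R^{-1}y$ is a unit vector and the $t$-integral is controlled by $\int_0^1 t^{\kappa-1}\,dt+\int_1^{\infty}t^{\kappa-1}\gamma(t)^{-1}\,dt$. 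The only cosmetic difference is that the paper keeps $a>0$ explicitly until after the difference is formed, whereas you write the formal $a=0$ identity for $\varphi$ itself and then note that the divergence at $t=\infty$ cancels in the difference; the underlying argument is the same.
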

\begin{proof}
	We first assume $\varphi\in C_b^{\infty}\left( \mathbf{R}^d\right)\cap \tilde{C}^{\kappa+\beta}_{\infty,\infty}\left( \mathbf{R}^d\right)$. By \eqref{rp2},
	\begin{eqnarray*}
	&&\varphi\left( x+y\right)-\varphi\left( x\right)\nonumber\\
	&=&C\int_{0}^{\infty }t^{\kappa-1}e^{-a t}\mathbf{E}\left[ 
	\left( aI-L^{\mu}\right)^{\kappa}\varphi\left( x+y+Z_{t}\right)-\left( aI-L^{\mu}\right)^{\kappa}\varphi\left( x+Z_{t}\right)\right] dt \nonumber\\
    &=&C\int_{0}^{\infty }t^{\kappa-1}e^{-a t}\int\left( aI-L^{\mu}\right)^{\kappa}\varphi\left( x+z\right)\left[ 
	p\left( t,z-y\right)-p\left(t, z\right)\right] dz dt, 
	\end{eqnarray*}
	where $Z_t=Z_{t}^{\mu}$ if $\kappa=1$ and $Z_t=Z_{t}^{\bar{\mu}}$ otherwise, and $p\left( t,x\right)$ denotes the probability density function of $Z_t$. Recall that Lemma \ref{lemma3} claims
	\begin{eqnarray*}
		\int \left\vert \nabla p\left( t,z\right)\right\vert dz<C'\gamma\left( t\right)^{-1}.
	\end{eqnarray*}
	Let $a\rightarrow 0$ under $\eqref{ass1}$. By Lemma \ref{ato}, for all $\kappa\in\left(0,2\right)$,
	\begin{eqnarray}
	&&\varphi\left( x+y\right)-\varphi\left( x\right)\nonumber\\ &=&C\left( \kappa\right)\int_{0}^{\infty }t^{\kappa-1}\int L^{\mu,\kappa}\varphi\left( x+z\right)\left[ p\left( t,z-y\right)-p\left( t,z\right)\right]dzdt\quad\nonumber\\
	&=&C\left( \kappa\right)w\left( R\right)^{\kappa}\int_{0}^{\infty }t^{\kappa-1}\int L^{\mu,\kappa}\varphi\left( x+Rz\right)\left[ p^R\left( t,z-R^{-1}y\right)-p^R\left( t,z\right)\right]dzdt.\qquad\nonumber
	\end{eqnarray}
	
	Now consider $\varphi\in  \tilde{C}^{\kappa+\beta}_{\infty,\infty}\left( \mathbf{R}^d\right)$. By \cite[Proposition 5]{mf} and Lemma \ref{cont}, there is a sequence of functions $v_n\in C_b^{\infty}\left( \mathbf{R}^d\right)\cap \tilde{C}^{\kappa+\beta}_{\infty,\infty}\left( \mathbf{R}^d\right)$ such that 
	\begin{eqnarray*}
		\lim_{n\rightarrow \infty}\left\vert L^{\mu,\kappa}v_n-L^{\mu,\kappa}\varphi\right\vert_0=0,\forall \kappa\in\left[0,2\right).
	\end{eqnarray*}
Moreover,
\begin{eqnarray*}
v_n\left( x+y\right)-v_n\left( x\right)
&=&C\left( \kappa\right)w\left( R\right)^{\kappa}\int_{0}^{\infty }t^{\kappa-1}\int L^{\mu,\kappa}v_n\left( x+Rz\right)\\
&&\cdot \left[ p^R\left( t,z-R^{-1}y\right)-p^R\left( t,z\right)\right]dzdt.
\end{eqnarray*}
	Pass the limit on both sides. Then $\eqref{tran21}$ holds for $ \varphi\in\tilde{C}^{\kappa+\beta}_{\infty,\infty}\left( \mathbf{R}^d\right)$. If $y\neq 0$, by setting $R=\left\vert y\right\vert$, we obtain $\eqref{mod1}$ immediately.
\end{proof}

Denote
\begin{eqnarray*}
	\nabla^{\alpha}u\left( x;y\right)&=&u\left( x+y\right)-u\left( x\right)-\chi_{\alpha}\left( y\right)y\cdot \nabla u\left( x\right).
\end{eqnarray*}
\begin{lemma}\label{diff}
	Let $\kappa\in\left(0,2\right), \beta\in\left(0,\infty\right)$ and $\mu$ be the reference measure. Assume
	\begin{eqnarray}
	1_{\alpha\in\left[1,2\right) }\int_0^{1}t^{\kappa-1}\gamma\left( t\right)^{-1}dt&<&\infty,\qquad\qquad\label{ass21}\\
	 \qquad \int_1^{\infty}1_{\alpha\in \left(0,1\right)}t^{\kappa-1}\gamma\left( t\right)^{-1}+ 1_{\alpha\in\left[1,2\right)}t^{\kappa-1}\gamma\left( t\right)^{-2}dt&<&\infty.\label{ass22}
	\end{eqnarray}
	Then for all $\alpha\in\left( 0,1\right)\cup\left( 1,2\right)$ and any function $\varphi\in \tilde{C}^{\kappa+\beta}_{\infty,\infty}\left( \mathbf{R}^d\right)$,
	\begin{eqnarray}
\nabla^{\alpha}\varphi\left( x;y\right)
	&=&C\left(\kappa \right)\int_{0}^{\infty }t^{\kappa-1}\int L^{\mu,\kappa}\varphi\left( x+z\right)\nabla^{\alpha} p\left( t,z;-y\right)dzdt\nonumber\\
	&=&C\left(\kappa \right)w\left( R\right)^{\kappa}\int_{0}^{\infty }t^{\kappa-1}\int L^{\mu,\kappa}\varphi\left( x+R z\right)\nonumber\\
	&&\nabla^{\alpha} p^{R}\left( t,z;-R^{-1}y\right)dzdt, \forall R>0.\qquad\label{tran2}
	\end{eqnarray}
	Moreover,
	\begin{eqnarray}
	\left\vert\nabla^{\alpha}\varphi\left( x;y\right)\right\vert\leq C w\left( \left\vert y\right\vert\right)^{\kappa}\left\vert L^{\mu,\kappa}\varphi\right\vert_0, \forall x,y\in\mathbf{R}^d.\label{mod2}
	\end{eqnarray}

	If $\alpha=1$, then $\eqref{mod2}$ hold for $\left\vert y\right\vert \leq 1$ and all $\varphi\in \tilde{C}^{\kappa+\beta}_{\infty,\infty}\left( \mathbf{R}^d\right)$.
\end{lemma}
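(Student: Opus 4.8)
The plan is to mimic the proof of Lemma \ref{dif} closely, replacing single differences by second-order differences $\nabla^{\alpha}$. First I would establish \eqref{tran2} for $\varphi\in C_b^{\infty}\left(\mathbf{R}^d\right)\cap\tilde{C}^{\kappa+\beta}_{\infty,\infty}\left(\mathbf{R}^d\right)$ by starting from \eqref{rp2} and applying the operator $\nabla^{\alpha}$ in the $x$-variable: since $\nabla^{\alpha}$ commutes with translations and with integration against the density, one gets, for $a>0$,
\begin{eqnarray*}
\nabla^{\alpha}\varphi\left(x;y\right)
&=&C\int_{0}^{\infty}t^{\kappa-1}e^{-at}\int\left( aI-L^{\mu}\right)^{\kappa}\varphi\left( x+z\right)\nabla^{\alpha}p\left( t,z;-y\right)dzdt,
\end{eqnarray*}
where $\nabla^{\alpha}p\left(t,z;-y\right)=p\left(t,z-y\right)-p\left(t,z\right)+\chi_{\alpha}\left(y\right)y\cdot\nabla_z p\left(t,z\right)$ (the sign on the drift term comes from differentiating in $z$ vs.\ $y$, and one should double-check it against the convention fixed for $\nabla^{\alpha}$). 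Then I would let $a\to 0_{+}$, using Lemma \ref{ato} for the uniform convergence of $\left(aI-L^{\mu}\right)^{\kappa}\varphi$ to $\pm L^{\mu,\kappa}\varphi$ and the integrability hypotheses \eqref{ass21}--\eqref{ass22} together with the bounds on $\int\left\vert\nabla p\left(t,z\right)\right\vert dz$ and $\int\left\vert D^2 p\left(t,z\right)\right\vert dz$ from Lemma \ref{lemma3} to pass the limit inside the $t$-integral. The scaling identity $p\left(t,z\right)=R^{-d}p^{R}\left(w\left(R\right)^{-1}t,R^{-1}z\right)$ stated right before the lemma then converts the first line of \eqref{tran2} into the second by the substitutions $t\mapsto w\left(R\right)^{-1}t$, $z\mapsto Rz$ and the homogeneity $R^{-d}\nabla^{\alpha}p^{R}\left(\cdot,R^{-1}z;-R^{-1}y\right)$; note the extra factor $w\left(R\right)^{\kappa}$ absorbs the $t^{\kappa-1}dt$ scaling.

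The estimate \eqref{mod2} for $\alpha\in\left(0,1\right)\cup\left(1,2\right)$ follows by choosing $R=\left\vert y\right\vert$ in \eqref{tran2}, pulling $\left\vert L^{\mu,\kappa}\varphi\right\vert_0$ out, and bounding
\begin{eqnarray*}
\int_0^{\infty}t^{\kappa-1}\int\left\vert\nabla^{\alpha}p^{R}\left(t,z;-R^{-1}y\right)\right\vert dzdt\leq C
\end{eqnarray*}
uniformly in $R$ with $\left\vert R^{-1}y\right\vert\le 1$ (when $R=\left\vert y\right\vert$ this displacement has unit length). For the inner integral one splits at $t=1$: for small $t$ one uses, when $\alpha\in\left(1,2\right)$ so $\chi_{\alpha}\equiv 1$, a Taylor estimate $\int\left\vert\nabla^{\alpha}p^{R}\left(t,z;h\right)\right\vert dz\le C\left\vert h\right\vert^2\int\left\vert D^2 p^{R}\left(t,z\right)\right\vert dz\le C\gamma\left(t\right)^{-2}$, making the $t$-integral near $0$ finite by \eqref{ass21} with $\kappa$ replaced appropriately — wait, actually one wants $\int_0^1 t^{\kappa-1}\gamma\left(t\right)^{-2}$, which is not literally \eqref{ass21}; so one instead bounds the second-order difference by $\left\vert h\right\vert\int\left\vert\nabla p^{R}\right\vert\le C\gamma\left(t\right)^{-1}$ to use \eqref{ass21}, and for the range $t\ge 1$ uses the genuinely second-order bound against $\gamma\left(t\right)^{-2}$ via \eqref{ass22}. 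When $\alpha\in\left(0,1\right)$ there is no drift correction, $\nabla^{\alpha}p^{R}=p^{R}\left(\cdot,z-h\right)-p^{R}\left(\cdot,z\right)$, and one uses $\int\left\vert\nabla p^{R}\right\vert\le C\gamma\left(t\right)^{-1}$ on both ranges, with the $t\ge 1$ piece controlled by \eqref{ass22}. The case $\alpha=1$ with $\left\vert y\right\vert\le 1$ is the same as $\alpha\in\left(1,2\right)$ since then $\chi_{\alpha}\left(y\right)=1$.

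Finally, the passage from $C_b^{\infty}\cap\tilde{C}^{\kappa+\beta}_{\infty,\infty}$ to general $\varphi\in\tilde{C}^{\kappa+\beta}_{\infty,\infty}\left(\mathbf{R}^d\right)$ is verbatim as in Lemma \ref{dif}: take $v_n\in C_b^{\infty}\cap\tilde{C}^{\kappa+\beta}_{\infty,\infty}$ with $\left\vert L^{\mu,\kappa}v_n-L^{\mu,\kappa}\varphi\right\vert_0\to 0$ (from \cite[Proposition 5]{mf} and Lemma \ref{cont}), write \eqref{tran2} for each $v_n$, and pass to the limit on both sides — the right-hand side converges because the kernel $\int_0^\infty t^{\kappa-1}\left\vert\nabla^{\alpha}p^{R}\left(t,z;-R^{-1}y\right)\right\vert dzdt$ is finite and uniform in $R$ by the bound just proved, while on the left $v_n\to\varphi$ pointwise (since $\tilde{C}^{\kappa+\beta}_{\infty,\infty}$-convergence of $L^{\mu,\kappa}v_n$ plus boundedness gives uniform convergence of $v_n$). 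I expect the main obstacle to be the small-$t$ analysis of $\int\left\vert\nabla^{\alpha}p^{R}\left(t,z;h\right)\right\vert dz$ in the $\alpha\in\left[1,2\right)$ case: one must interpolate correctly between the first-order bound $C\gamma\left(t\right)^{-1}$ and the second-order bound $C\gamma\left(t\right)^{-2}$ so that exactly the hypotheses \eqref{ass21} and \eqref{ass22} — and not something stronger — suffice, and one must keep careful track of where the indicator $1_{\alpha\in\left[1,2\right)}$ in \eqref{ass21} is actually used versus where the $\alpha\in\left(0,1\right)$ estimate in \eqref{ass22} takes over.
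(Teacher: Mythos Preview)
Your proposal is correct and follows essentially the same route as the paper: start from \eqref{rp2} for smooth $\varphi$, pass $a\to 0_{+}$ via Lemma \ref{ato} and the integrability hypotheses, rescale to get the $p^{R}$ form, set $R=\left\vert y\right\vert$ for \eqref{mod2}, and extend by density. One small correction: for $\alpha\in\left(0,1\right)$ on the range $t\in\left(0,1\right)$ you do not have $\int_0^1 t^{\kappa-1}\gamma\left(t\right)^{-1}dt<\infty$ available, but since $\nabla^{\alpha}p^{R}$ is then just a first difference of probability densities you may use the trivial bound $\int\left\vert\nabla^{\alpha}p^{R}\left(t,z;h\right)\right\vert dz\leq 2$ there, which makes $\int_0^1 t^{\kappa-1}\,dt$ finite automatically.
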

\begin{proof}
	Let $a>0$. Similarly as in Lemma \ref{dif}, we first consider $\varphi\in C_b^{\infty}\left( \mathbf{R}^d\right)\cap\tilde{C}^{\kappa+\beta}_{\infty,\infty}\left( \mathbf{R}^d\right)$. By $\eqref{rp2}$,
	\begin{eqnarray*}
		\nabla^{\alpha}\varphi\left( x;y\right)=C\int_0^{\infty}t^{\kappa-1}e^{-at}\int\left( aI-L^{\mu}\right)^{\kappa}\varphi\left(x+z\right)\nabla^{\alpha}p \left( t,z;-y\right)dzdt,
	\end{eqnarray*}
	where $C=\left( \int_0^{\infty}t^{\kappa-1}e^{-t}dt\right)^{-1}$. Let $a\rightarrow 0$ under assumptions $\eqref{ass21}, \eqref{ass22}$. Then for all $\kappa\in\left(0,2\right)$,
	\begin{eqnarray*}
		\nabla^{\alpha}\varphi\left( x;y\right)
		&=&C\int_0^{\infty}t^{\kappa-1}\int L^{\mu,\kappa}\varphi\left(x+z\right)\nabla^{\alpha}p \left( t,z;-y\right)dzdt\\
		&=& Cw\left( R\right)^{\kappa}\int_{0}^{\infty }t^{\kappa-1}\int L^{\mu,\kappa}\varphi\left( x+Rz\right)\nabla^{\alpha} p^R\left( t,z;-R^{-1}y\right)dzdt.
	\end{eqnarray*}
	
	For general $\varphi\in  \tilde{C}^{\kappa+\beta}_{\infty,\infty}\left( \mathbf{R}^d\right)$. By \cite[Proposition 5]{mf} and Lemma \ref{cont}, there is a sequence of functions $v_n\in C_b^{\infty}\left( \mathbf{R}^d\right)\cap \tilde{C}^{\kappa+\beta}_{\infty,\infty}\left( \mathbf{R}^d\right)$ such that 
	\begin{eqnarray*}
		\lim_{n\rightarrow \infty}\left\vert L^{\mu,\kappa}v_n-L^{\mu,\kappa}\varphi\right\vert_0=0,\forall \kappa\in\left[0,2\right),
	\end{eqnarray*}
and
\begin{eqnarray*}
	\nabla^{\alpha}v_n\left( x;y\right)
	&=& Cw\left( R\right)^{\kappa}\int_{0}^{\infty }t^{\kappa-1}\int L^{\mu,\kappa}v_n\left( x+Rz\right)\nabla^{\alpha} p^R\left( t,z;-R^{-1}y\right)dzdt.
\end{eqnarray*}
	Passing the limit on both sides, we obtain $\eqref{tran2}$ for all functions in $ \tilde{C}^{\kappa+\beta}_{\infty,\infty}\left( \mathbf{R}^d\right)$. Setting $R=\left\vert y\right\vert,y\neq 0$, we have
	\begin{eqnarray}
\nabla^{\alpha}\varphi\left( x;y\right)
	&=&	C\left(\kappa \right)w\left( \left\vert y\right\vert\right)^{\kappa}\int_{0}^{\infty }t^{\kappa-1}\int L^{\mu,\kappa}\varphi\left( x+\left\vert y\right\vert z\right)\notag\\
	&&\nabla^{\alpha} p^{\left\vert y\right\vert}\left( t,z;-\left\vert y\right\vert^{-1}y\right)dzdt.\qquad\label{ddd}
	\end{eqnarray}
	
	$\eqref{mod2}$ then follows from $\eqref{ddd},\eqref{ass21}$ and $\eqref{ass22}$.
\end{proof}

 Now we are ready to prove the stronger continuity of the operator. Choose $\eta\left(x\right)\in C_0^{\infty}\left( \mathbf{R}^d\right)$ such that $0\leq \eta\left( x\right)\leq 1,\forall x\in \mathbf{R}^d, supp \left( \eta\right)\subseteq \{x:\left\vert x\right\vert\leq 2 \}$, and $\eta\left( x\right)\equiv 1$ on $\overline{B_1\left( 0\right)}$. $\eta_{m,z}\left( x\right):=\eta\left( m\left( x-z\right)\right),m\geq 1$,

\subsection{Operators with Space-Dependent Kernels }
Let $\nu$ be a L\'{e}vy measure satisfying \textbf{$\mathbf{\tilde{A}}(w,l,\gamma)$}. We now consider $\rho\left( t,x, y\right)\nu\left( dy\right)$, where $\rho\left(t, x, y\right)$ satisfies \textbf{H($K,\beta$)}. Obviously, $\rho\left( t,x, y\right)\nu\left( dy\right)$ is a L\'{e}vy measure for each fixed $x\in\mathbf{R}^d$ and $t\in\left[0,T\right]$. Denote
\begin{eqnarray}
	&&L_{t,z} u\left(x\right)\label{opp3}\\
	&=&\int \left[ u\left( x+y\right)-u\left(t, x\right)-\chi_{\alpha}\left( y\right)y\cdot \nabla u\left( x\right)\right]\rho\left( t,z, y\right)\nu\left( dy\right),\notag\\
	&&\langle u,\eta_{m,z}\rangle_{t,z} \label{opp4}\\
	&=&\int\left[ u\left(  x+y\right)-u\left(  x\right)\right]\left[ \eta_{m,z}\left(  x+y\right)-\eta_{m,z}\left(  x\right)\right]\rho\left( t, z,y\right) \nu\left( dy\right).\notag
\end{eqnarray}

\begin{lemma}\label{cont2}
	Let $\nu$ be a L\'{e}vy measure satisfying \textbf{$\mathbf{\tilde{A}}(w,l,\gamma)$} and $\rho$ be a bounded measurable function. $\beta\in\left( 0,1/\alpha\right)$. $u\in \tilde{C}_{\infty ,\infty }^{1+\beta }\left(\mathbf{R}^d\right)$. Then there is $\beta'\in\left( 0,\beta\right)$ such that
	\begin{eqnarray*}
		\left\vert L_{t,z} u \right\vert_{0}&\leq& C\sup_{t,z,y}\left\vert \rho\left( t, z,y\right)\right\vert\left\vert u\right\vert_{1+\beta',\infty},\\
		\left[ L_{t,z} u\right]_{\beta}&\leq& C\sup_{t,z,y}\left\vert \rho\left( t, z,y\right)\right\vert\left\vert u\right\vert_{1+\beta,\infty}.
	\end{eqnarray*}
	where $C$ does not depend on $t,z$ or $u$.
\end{lemma}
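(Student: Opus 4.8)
The main idea is to bound the two seminorms of $L_{t,z}u$ by splitting the integral defining \eqref{opp3} into a small-jump region $\{|y|\le 1\}$ and a large-jump region $\{|y|>1\}$, and then apply the modulus-of-continuity estimates from Lemmas \ref{dif} and \ref{diff} to control the integrand.

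First I would treat $\left\vert L_{t,z}u\right\vert_0$. For fixed $t,z$ I write
\begin{eqnarray*}
L_{t,z}u(x)=\int_{|y|\le 1}\nabla^{\alpha}u(x;y)\,\rho(t,z,y)\,\nu(dy)+\int_{|y|>1}\left[u(x+y)-u(x)-\chi_\alpha(y)y\cdot\nabla u(x)\right]\rho(t,z,y)\,\nu(dy).
\end{eqnarray*}
On $\{|y|\le 1\}$, I pick $\kappa\in\left(1,1+\varepsilon\right)$ small enough that assumptions \eqref{ass21}--\eqref{ass22} (equivalently the $\alpha$-dependent integrability clauses in \textbf{$\mathbf{\tilde A}(w,l,\gamma)$} (iii)) hold with that $\kappa$; then \eqref{mod2} gives $\left\vert\nabla^\alpha u(x;y)\right\vert\le C\,w(|y|)^\kappa\left\vert L^{\mu,\kappa}u\right\vert_0$, and by Lemma \ref{cont} this is $\le C\,w(|y|)^\kappa|u|_{\kappa+\beta',\infty}$ for a suitable $\beta'$. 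Since $w(|y|)^\kappa\le l(|y|)^{\kappa}$ for $|y|\le1$ (up to the normalization $w(1)=1$), the small-jump integral is bounded by $C\,|u|_{1+\beta',\infty}\int_{|y|\le 1}l(|y|)^{\kappa}\nu(dy)$, and this integral is finite because $\kappa$ can be taken in $(1,1+\alpha_1)$-type range so that the scaling hypotheses in \textbf{A(w,l)}(iii) and \textbf{$\mathbf{\tilde A}(w,l,\gamma)$}(ii) make $\int_{|y|\le1}l(|y|)^{\kappa}\nu(dy)<\infty$ (when $\alpha\in[1,2)$ one absorbs the compensator term $\chi_\alpha(y)y\cdot\nabla u(x)$ into $\nabla^\alpha$, using the remark after Lemma \ref{deri} that $u$ has a bounded classical gradient). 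For $\alpha\in(0,1)$, $\chi_\alpha\equiv0$ and there is no compensator. On $\{|y|>1\}$, I use $\left\vert u(x+y)-u(x)\right\vert\le 2|u|_0$, bound the (finite) mass $\nu(|y|>1)$, and when $\alpha\in(1,2)$ bound $|\chi_\alpha(y)y\cdot\nabla u(x)|$ by $C|y|\,|\nabla u|_0$ integrated against $\int_{|y|>1}|y|\nu(dy)<\infty$ from the scalability clause (iii) of \textbf{A(w,l)}. Collecting, $\left\vert L_{t,z}u\right\vert_0\le C\sup|\rho|\cdot|u|_{1+\beta',\infty}$, with $C$ independent of $t,z$.

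Next I would treat $\left[L_{t,z}u\right]_\beta$. Fix $x,h$ and write $L_{t,z}u(x+h)-L_{t,z}u(x)=\int\left[\nabla^\alpha u(x+h;y)-\nabla^\alpha u(x;y)\right]\rho(t,z,y)\,\nu(dy)$ (on $\{|y|>1\}$ replace $\nabla^\alpha$ by the corresponding bracket; the compensator terms are handled as above). The point is that $v:=u(\cdot+h)-u(\cdot)$ again lies in $\tilde C^{1+\beta}_{\infty,\infty}$, and I want a bound on its increments that is both $O(w(|y|)^\kappa)$-type in $y$ and $O(w(|h|)^\beta)$-type in $h$. To get this I apply \eqref{mod2} with exponent $\kappa=1$, namely to $L^{\mu}v=L^\mu u(\cdot+h)-L^\mu u(\cdot)$, obtaining $\left\vert\nabla^\alpha v(x;y)\right\vert\le C\,w(|y|)\left\vert L^\mu u(\cdot+h)-L^\mu u\right\vert_0\le C\,w(|y|)\,[L^\mu u]_\beta\,w(|h|)^\beta$, and on $\{|y|>1\}$ I use the $0$-order bound $\left\vert v(x+y)-v(x)\right\vert\le 2[L^\mu u]_\beta w(|h|)^\beta$ after noting $\left\vert v\right\vert_0\le [u]_{something}$; cleaner is to bound the increment of $v$ directly by $w(|h|)^\beta$ times $|L^\mu u|_0+[L^\mu u]_\beta\le |u|_{1+\beta,\infty}$ using norm equivalence (Proposition \ref{thm4} and Lemma \ref{cont}). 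Integrating against $\rho\,\nu$ over $\{|y|\le1\}$ and $\{|y|>1\}$ exactly as in the first part, with the same finiteness of $\int_{|y|\le1}l(|y|)\nu(dy)$ and $\int_{|y|>1}\nu(dy)$, $\int_{|y|>1}|y|\nu(dy)$, yields $\left\vert L_{t,z}u(x+h)-L_{t,z}u(x)\right\vert\le C\sup|\rho|\,|u|_{1+\beta,\infty}\,w(|h|)^\beta$, i.e. the claimed seminorm bound.

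The main obstacle is the bookkeeping around $\alpha=1$ and $\alpha\in(1,2)$: one must choose $\kappa$ and the auxiliary exponent $\beta'$ so that simultaneously (a) the representation Lemmas \ref{dif}/\ref{diff} apply (their hypotheses \eqref{ass1}, \eqref{ass21}, \eqref{ass22} hold for that $\kappa$, which is exactly what \textbf{$\mathbf{\tilde A}(w,l,\gamma)$}(iii) guarantees), (b) $\kappa+\beta'\le 1+\beta$ so that Lemma \ref{cont} gives $\left\vert L^{\mu,\kappa}u\right\vert_0\le C|u|_{1+\beta,\infty}$, and (c) the resulting scaling integral $\int_{|y|\le1}l(|y|)^{\kappa}\nu(dy)$ converges — which forces $\kappa$ strictly between $1$ and roughly $1+\alpha_1$ and is the reason a genuinely smaller $\beta'<\beta$ appears in the $\left\vert\cdot\right\vert_0$ bound. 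Handling the drift compensator $\chi_\alpha(y)y\cdot\nabla u$ when $\alpha\in[1,2)$ requires invoking that $u\in\tilde C^{1+\beta}_{\infty,\infty}$ has a bounded, $w(\cdot)^\beta$-Hölder classical gradient (remark after Lemma \ref{deri}), which is what makes the large-jump drift term and its increment integrable; everything else is a routine split-and-estimate.
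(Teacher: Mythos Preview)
Your treatment of $\left\vert L_{t,z}u\right\vert_0$ is essentially the paper's argument: split at $|y|=1$, apply \eqref{mod2} with an exponent $\kappa=1+\beta'>1$ on the small-jump part (so that Lemma \ref{ess}(c) gives $\int_{|y|\le 1}w(|y|)^{1+\beta'}\nu(dy)<\infty$), and handle the large-jump part by crude bounds on $u$ and $\nabla u$.

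However, your argument for $\left[L_{t,z}u\right]_\beta$ has a genuine gap. You apply \eqref{mod2} with $\kappa=1$ to the difference $v=u(\cdot+h)-u(\cdot)$, obtaining a bound $C\,w(|y|)\,[L^{\mu}u]_\beta\,w(|h|)^\beta$, and then you claim finiteness of $\int_{|y|\le 1}l(|y|)\,\nu(dy)$. But Lemma \ref{ess}(b) states precisely that $\int_{|y|\le 1}w(|y|)\,\nu(dy)=+\infty$, so this integral diverges and the estimate fails. If instead you raise the exponent to $\kappa=1+\beta'>1$ to make the $y$-integral converge, then $L^{\mu,1+\beta'}u$ only lies in $\tilde C^{\beta-\beta'}_{\infty,\infty}$, and you recover at best $w(|h|)^{\beta-\beta'}$, not $w(|h|)^{\beta}$.

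The paper resolves this by an $h$-dependent split: set $a=|h|$ and cut the $y$-integral at $|y|=a$ rather than at $|y|=1$. On $\{|y|\le a\}$ one uses \eqref{ddd} with exponent $1+\beta'$ and the seminorm $[L^{\mu,1+\beta'}u]_{\beta-\beta'}$, picking up $w(a)^{\beta-\beta'}$; the $y$-integral $\int_{|y|\le a}w(|y|)^{1+\beta'}\nu(dy)$ is evaluated via the Stieltjes substitution $\varsigma(r)=\nu(|y|>r)\sim w(r)^{-1}$ (Lemma \ref{ess}(a)) to yield an extra factor $\sim w(a)^{\beta'}$, restoring $w(a)^{\beta}$. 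On $\{|y|>a\}$ one uses an exponent $1+\beta-\kappa<1$ with $\kappa$ slightly larger than $\beta$, so the $y$-integral now converges at infinity and contributes $\sim w(a)^{\beta-\kappa}$, which combines with $w(a)^{\kappa}$ from $[L^{\mu,1+\beta-\kappa}u]_{\kappa}$ to give $w(a)^{\beta}$ again. This exponent-trading against the moving cutoff $a$ is the missing idea.
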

\begin{proof}
	Recall parameters introduced in \textbf{A(w,l)}. Clearly,
	\begin{eqnarray*}
		\left\vert L_{t,z} u \right\vert_{0}\leq \sup_{t,z,y}\left\vert \rho\left( t, z,y\right)\right\vert\left(\int_{\left\vert y\right\vert\leq 1} \left\vert \nabla^{\alpha}u\left(x;y\right)\right\vert\nu\left( dy\right)+\int_{\left\vert y\right\vert> 1} \left\vert \nabla^{\alpha}u\left( x;y\right)\right\vert\nu\left( dy\right)\right).
	\end{eqnarray*}
	
	Choose $\kappa\in\left(0,1\right)$ sufficiently small so that $\lim_{r\rightarrow \infty}w\left( r\right)^{\kappa}/r^{\alpha_2}=0$. According to \cite[Lemma 1]{mf}, such a $\kappa$ must exist. Then by $\eqref{mod1}$ and \textbf{$\mathbf{\tilde{A}}(w,l,\gamma)$}(iii), for all $\alpha\in\left(0,1\right]$,
	\begin{eqnarray*}
		\int_{\left\vert y\right\vert> 1} \left\vert \nabla^{\alpha}u\left(x;y\right)\right\vert\nu\left( dy\right)
		&\leq& C\left\vert L^{\mu,\kappa}u\right\vert_0\int_{\left\vert y\right\vert> 1}  w\left( \left\vert y\right\vert\right)^{\kappa} \nu\left( dy\right)\\
		&\leq& C\left(d,\kappa,\alpha\right)\left\vert L^{\mu,\kappa}u\right\vert_0, \forall x\in\mathbf{R}^d.
	\end{eqnarray*}
	If $\alpha\in\left(1,2\right)$, then we use Lemma \ref{deri} and \textbf{A(w,l)}.
	\begin{eqnarray*}
		\int_{\left\vert y\right\vert> 1} \left\vert \nabla^{\alpha}u\left(x;y\right)\right\vert\nu\left( dy\right)
		&\leq& C\left(d,\alpha\right)\left( \left\vert u\right\vert_0+\left\vert u\right\vert_{1,\infty}\right), \forall x\in\mathbf{R}^d.
	\end{eqnarray*}
	It follows from \cite[Proposition 4]{mf} and Lemma \ref{cont} that for all $\alpha\in\left(0,2\right)$ and any $\beta'\in\left( 0,\beta\right)$, 
	\begin{eqnarray*}
		\int_{\left\vert y\right\vert> 1} \left\vert \nabla^{\alpha}u\left(x;y\right)\right\vert\nu\left( dy\right)
		\leq C\left(d,\kappa,\alpha,\beta'\right)\left\vert u\right\vert_{1+\beta',\infty},\forall x\in\mathbf{R}^d.
	\end{eqnarray*}
	
	On the other hand, suggested by \textbf{$\mathbf{\tilde{A}}(w,l,\gamma)$}(iii), we apply $\eqref{mod2}$ by setting $\beta'\in\left( 0,\delta\right)$ if $\alpha\neq 1$ and $\beta'=\delta$ if $\alpha= 1$. Then
	\begin{eqnarray*}
		&&\int_{\left\vert y\right\vert\leq 1} \left\vert \nabla^{\alpha}u\left(x;y\right)\right\vert\nu\left( dy\right)\\
		&\leq& C\left(d,\beta',\alpha\right)\left\vert L^{\mu,1+\beta'}u\right\vert_{0}\int_{\left\vert y\right\vert\leq 1}w\left( \left\vert y\right\vert\right)^{1+\beta'}\nu\left( dy\right),\forall x\in\mathbf{R}^d.
	\end{eqnarray*}
	By Lemma \ref{ess} (c) and Lemma \ref{cont},
	\begin{eqnarray*}
		\int_{\left\vert y\right\vert\leq 1} \left\vert \nabla^{\alpha}u\left(x;y\right)\right\vert\nu\left( dy\right)
		&\leq& C\left(d,\beta',\alpha\right)\left\vert u\right\vert_{1+\left(\beta+\beta'\right)/2,\infty},\forall x\in\mathbf{R}^d.
	\end{eqnarray*}
	
	Now consider $\left\vert L_z u\left(x_1\right)-
	L_z u\left(x_2\right)\right\vert$. Set $a=\left\vert x_1-x_2\right\vert$. Then,
	\begin{eqnarray*}
		&&\left\vert L_{t,z} u\left(x_1\right)-
		L_{t,z} u\left(x_2\right) \right\vert\\
		&\leq& \sup_{t,z,y}\left\vert \rho\left( t, z,y\right)\right\vert\int_{\left\vert y\right\vert\leq a} \left\vert \nabla^{\alpha}\left(u\left(x_1;y\right)-u\left(x_2;y\right)\right)\right\vert\nu\left( dy\right)\\
		&+&\sup_{t,z,y}\left\vert \rho\left( t, z,y\right)\right\vert\int_{\left\vert y\right\vert> a} \left\vert \nabla^{\alpha}\left(u\left(x_1;y\right)-u\left(x_2;y\right)\right)\right\vert\nu\left( dy\right).
	\end{eqnarray*}
	
	Denote $\varsigma\left( r\right)=\nu\left( \left\vert y\right\vert >r\right)$ and take $\beta'\in\left( 0,\delta\right)$ if $\alpha\neq 1$ and $\beta'=\delta$ if $\alpha= 1$. Apply $\eqref{ddd}$, \cite[Proposition 1]{mf}, Lemmas \ref{cont} and \ref{ess}(a).
	\begin{eqnarray*}
		&&\int_{\left\vert y\right\vert\leq a} \left\vert \nabla^{\alpha}\left(u\left(x_1;y\right)-u\left(x_2;y\right)\right)\right\vert\nu\left( dy\right)\\
		&\leq& C\sup_z\left\vert L^{\mu,1+\beta'}u\left(x_1+ z\right)-L^{\mu,1+\beta'}u\left(x_2+z\right)\right\vert\int_{\left\vert y\right\vert\leq a}w\left( \left\vert y\right\vert\right)^{1+\beta'} \nu\left( dy\right)\\
		&\leq& -C\left[L^{\mu,1+\beta'}u\right]_{\beta-\beta'}w\left( a\right)^{\beta-\beta'}\int_{0}^a \varsigma\left( r\right)^{-1-\beta'} d\varsigma\left( r\right)\\
		&\leq& C\left\vert u\right\vert_{1+\beta,\infty}w\left( a\right)^{\beta-\beta'} \varsigma\left( r\right)^{-\beta'}|_{0}^{a}\leq C\left\vert u\right\vert_{1+\beta,\infty} w\left( a\right)^{\beta}.
	\end{eqnarray*}
	
	Recall \textbf{$\mathbf{\tilde{A}}(w,l,\gamma)$} and set $\kappa=\beta+\min\left( \delta,\varepsilon\right)/2$ if $\alpha\neq 1$ and $\kappa=\beta+\left( \delta'+\varepsilon\right)/2$ if $\alpha=1$. Apply $\eqref{ddd}$, \cite[Proposition 1]{mf} and Proposition \ref{thm4}. 
	\begin{eqnarray*}
		&&\int_{\left\vert y\right\vert> a} \left\vert \nabla^{\alpha}\left(u\left(x_1;y\right)-u\left(x_2;y\right)\right)\right\vert\nu\left( dy\right)\\
		&\leq& C\sup_z\left\vert L^{\mu,1+\beta-\kappa}u\left(x_1+ z\right)-L^{\mu,1+\beta-\kappa}u\left(x_2+z\right)\right\vert\int_{\left\vert y\right\vert> a}w\left( \left\vert y\right\vert\right)^{1+\beta-\kappa}\nu\left( dy\right)\\
		&\leq& C\left\vert u\right\vert_{1+\beta,\infty}w\left( a\right)^{\kappa}\int_{\left\vert y\right\vert> a}w\left( \left\vert y\right\vert\right)^{1+\beta-\kappa}\nu\left( dy\right).
	\end{eqnarray*}
	Similarly as above,
	\begin{eqnarray*}
		&&\int_{\left\vert y\right\vert> a} \left\vert \nabla^{\alpha}\left(u\left(x_1;y\right)-u\left(x_2;y\right)\right)\right\vert\nu\left( dy\right)\\
		&\leq&- C\left\vert u\right\vert_{1+\beta,\infty}w\left( a\right)^{\kappa}\int_{ a}^{\infty}\varsigma\left( r\right)^{-1-\beta+\kappa}d\varsigma\left(r\right)\\
		&\leq& -C \left\vert u\right\vert_{1+\beta,\infty}w\left( a\right)^{\kappa}\varsigma\left(r\right)^{\kappa-\beta}|_{a}^{\infty}\leq  C \left\vert u\right\vert_{1+\beta,\infty}w\left( a\right)^{\beta}.
	\end{eqnarray*}

	As a conclusion, $\left[ L_{t,z} u\right]_{\beta}\leq C\sup_{t,z,y}\left\vert \rho\left( t, z,y\right)\right\vert\left\vert u\right\vert_{1+\beta,\infty}$.
\end{proof}

\begin{corollary}\label{coo}
	Let $\nu$ be a L\'{e}vy measure satisfying \textbf{$\mathbf{\tilde{A}}(w,l,\gamma)$} and $\rho$ satisfy \textbf{H($K,\beta$)}, $\beta\in\left( 0,1/\alpha\right)$. Then for any $u\in \tilde{C}_{\infty ,\infty }^{1+\beta }\left(\mathbf{R}^d\right)$,
	\begin{eqnarray*}
		\left\vert\mathcal{A} u \right\vert_{\beta}&\leq& C\left(\sup_{t,z,y}\left\vert \rho\left( t, z,y\right)\right\vert\left\vert u \right\vert_{1+\beta,\infty}+\sup_{t,y}\left\vert\rho\left( t, \cdot,y\right)\right\vert_{\beta}\left\vert u\right\vert_{1+\beta',\infty}\right),
	\end{eqnarray*}
	where $\beta'\in\left( 0,\beta\right)$ and $C$ does not depend on $u$.
\end{corollary}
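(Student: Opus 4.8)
The plan is to exploit the pointwise identity $\mathcal{A}u(t,x)=L_{t,x}u(x)$, which realizes $\mathcal{A}$ as the frozen-kernel operator $L_{t,z}$ of \eqref{opp3} evaluated on the diagonal $z=x$, and then to reduce the whole statement to Lemma \ref{cont2} together with the H\"{o}lder continuity \eqref{bound2} of $\rho$ in the kernel variable. In particular, well-definedness and finiteness of $\mathcal{A}u$ at each point are already contained in Lemma \ref{cont2}.

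For the sup-norm part I would simply observe that
\[
\left\vert\mathcal{A}u\right\vert_0=\sup_{t,x}\left\vert L_{t,x}u(x)\right\vert\leq\sup_{t,z}\left\vert L_{t,z}u\right\vert_0,
\]
so the first estimate of Lemma \ref{cont2} gives $\left\vert\mathcal{A}u\right\vert_0\leq C\sup_{t,z,y}\left\vert\rho(t,z,y)\right\vert\,\left\vert u\right\vert_{1+\beta',\infty}$ with $\beta'\in(0,\beta)$ the exponent furnished by that lemma.

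For the seminorm I would fix $t$ and points $x_1,x_2$, set $a:=\left\vert x_1-x_2\right\vert$, and decompose
\[
\mathcal{A}u(t,x_1)-\mathcal{A}u(t,x_2)=\bigl(L_{t,x_1}u(x_1)-L_{t,x_1}u(x_2)\bigr)+\bigl(L_{t,x_1}u(x_2)-L_{t,x_2}u(x_2)\bigr).
\]
The first term does not exceed $[L_{t,x_1}u]_\beta\,w(a)^\beta$, which by the second estimate of Lemma \ref{cont2} is at most $C\sup_{t,z,y}\left\vert\rho(t,z,y)\right\vert\,\left\vert u\right\vert_{1+\beta,\infty}\,w(a)^\beta$. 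The second term equals $\int\nabla^\alpha u(x_2;y)\,[\rho(t,x_1,y)-\rho(t,x_2,y)]\,\nu(dy)$, so by \eqref{bound2} it is bounded by $\sup_{t,y}\left\vert\rho(t,\cdot,y)\right\vert_\beta\,w(a)^\beta\int\left\vert\nabla^\alpha u(x_2;y)\right\vert\,\nu(dy)$; the remaining integral is precisely the quantity already controlled inside the proof of Lemma \ref{cont2} — splitting $\left\vert y\right\vert\leq 1$ and $\left\vert y\right\vert>1$ and invoking \eqref{mod1}, \eqref{mod2}, Lemma \ref{deri}, \cite[Proposition 4]{mf} and Lemma \ref{cont} — which yields $\int\left\vert\nabla^\alpha u(x_2;y)\right\vert\,\nu(dy)\leq C\left\vert u\right\vert_{1+\beta',\infty}$ for a suitable $\beta'<\beta$. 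Dividing by $w(a)^\beta$, taking the supremum over $t,x_1,x_2$, and adding the sup-norm bound produces the asserted inequality.

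This is essentially bookkeeping once Lemma \ref{cont2} is available; the only point demanding attention is to keep the regularity index multiplying $\sup_{t,y}\left\vert\rho(t,\cdot,y)\right\vert_\beta$ strictly below $1+\beta$, which is possible because the interior contribution $\int_{\left\vert y\right\vert\leq 1}\left\vert\nabla^\alpha u(x_2;y)\right\vert\,\nu(dy)$ only costs the index $1+(\beta+\beta')/2<1+\beta$ (the exterior contribution costing $1+\beta'$). I do not anticipate a genuine obstacle beyond carefully reusing the estimates already derived in Lemma \ref{cont2}.
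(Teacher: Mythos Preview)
Your proposal is correct and follows essentially the same argument as the paper: identify $\mathcal{A}u(t,x)=L_{t,x}u(x)$, bound the sup-norm by $\sup_{t,z}|L_{t,z}u|_0$ via Lemma \ref{cont2}, and for the seminorm split the difference into a frozen-kernel term (handled by $[L_{t,z}u]_\beta$) plus a kernel-variation term (handled by \eqref{bound2} together with the $|L_{t,z}u|_0$ bound). The only cosmetic difference is that the paper freezes the kernel at the second point rather than the first and invokes Lemma \ref{cont2} directly for the kernel-variation term instead of unpacking the integral $\int|\nabla^\alpha u(x_2;y)|\,\nu(dy)$ as you do.
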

\begin{proof}
	Obviously, $\left\vert \mathcal{A} u \right\vert_{0}\leq \sup_{t,z}\left\vert L_{t,z} u \right\vert_{0}\leq C\sup_{t,z,y}\left\vert \rho\left( t, z,y\right)\right\vert\left\vert  u \right\vert_{1+\beta',\infty}$ for some $\beta'\in\left( 0,\beta\right)$. Meanwhile,
	\begin{eqnarray*}
		&&\left\vert L_{t,x+y} u\left( x+y\right)-L_{t,x} u\left( x\right)\right\vert\\
		&\leq&\left\vert L_{t,x+y} u\left( x+y\right)-L_{t,x} u\left( x+y\right)\right\vert+\left\vert L_{t,x} u\left( x+y\right)-L_{t,x} u\left( x\right)\right\vert\\
		&\leq& C\sup_{t,y}[\rho\left( t, \cdot,y\right)]_{\beta}\left\vert u\right\vert_{1+\beta',\infty} w\left( \left\vert y\right\vert\right)^{\beta}+ C \sup_{t,z,y}\left\vert \rho\left( t, z,y\right)\right\vert\left\vert u\right\vert_{1+\beta,\infty} w\left( \left\vert y\right\vert\right)^{\beta}.
	\end{eqnarray*} 
	Namely, $\left[\mathcal{A} u\right]_{\beta}\leq C\left(\sup_{t,y}[\rho\left( t, \cdot,y\right)]_{\beta}\left\vert u \right\vert_{1+\beta',\infty}+\sup_{t,z,y}\left\vert \rho\left( t, z,y\right)\right\vert\left\vert u\right\vert_{1+\beta,\infty}\right)$.
\end{proof}

\begin{lemma}\label{aa}
	Let $\nu$ be a L\'{e}vy measure satisfying \textbf{$\mathbf{\tilde{A}}(w,l,\gamma)$} and $\rho$ satisfy \textbf{H($K,\beta$)}. $\beta\in\left(0,1/\alpha\right)$. Then for any $u\in \tilde{C}_{\infty ,\infty }^{1+\beta }\left(\mathbf{R}^d\right)$ and any $\varepsilon\in\left( 0,1\right)$,
	\begin{eqnarray}\label{cross}
	\sup_{t,z}\left\vert \langle u,\eta_{m,z}\rangle_{t,z}\right\vert_{\beta,\infty}\leq C l\left(m\right)^{1+\beta}\left( \varepsilon \left\vert u\right\vert_{1+\beta,\infty}+C_{\varepsilon}\left\vert u\right\vert_{0}\right),
	\end{eqnarray}
	where $C_{\varepsilon}$ depends on $\varepsilon$ but is independent of $u$.
\end{lemma}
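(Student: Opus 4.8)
The bracket $\langle u,\eta_{m,z}\rangle_{t,z}$ is precisely the Leibniz defect $L_{t,z}(u\eta_{m,z})-\eta_{m,z}L_{t,z}u-uL_{t,z}\eta_{m,z}$, hence bilinear in the increments $u(x+y)-u(x)$ and $\eta_{m,z}(x+y)-\eta_{m,z}(x)$ and one order smoother than either factor; the weight $l(m)^{1+\beta}$ is, up to a constant, the size of $\eta_{m,z}$ in $\tilde C^{1+\beta}_{\infty,\infty}(\mathbf{R}^d)$. The plan is to prove first
\[
\sup_{t,z}\left\vert\langle u,\eta_{m,z}\rangle_{t,z}\right\vert_{\beta,\infty}\le C\,l(m)^{1+\beta}\left\vert u\right\vert_{1+\beta',\infty},\qquad\beta'\in(0,\beta),
\]
with $C$ independent of $t,z,m,u$, and then to obtain $\eqref{cross}$ from it via the interpolation inequality $\left\vert u\right\vert_{1+\beta',\infty}\le\varepsilon\left\vert u\right\vert_{1+\beta,\infty}+C_\varepsilon\left\vert u\right\vert_0$, which is immediate from the Besov characterisation of the norms and the $\log$-convexity of $s\mapsto w(N^{-j})^{-s}$. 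Since $\beta\in(0,1/\alpha)$, by \cite[Proposition 1]{mf} it is enough to bound $\left\vert\langle u,\eta_{m,z}\rangle_{t,z}\right\vert_0$ and $[\langle u,\eta_{m,z}\rangle_{t,z}]_\beta$.

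The first ingredient is a scaling estimate for $\eta_{m,z}$. From the symbol identity $\psi^{\mu}(\xi)=w(R)^{-1}\psi^{\tilde\mu_R}(R\xi)$ recorded above (take $R=1/m$) one gets $L^{\mu}\eta_{m,z}(x)=w(1/m)^{-1}(L^{\tilde\mu_{1/m}}\eta)(m(x-z))$ and, likewise, $L^{\mu,\kappa}\eta_{m,z}(x)=w(1/m)^{-\kappa}(L^{\tilde\mu_{1/m},\kappa}\eta)(m(x-z))$; here the symmetry in \textbf{A(w,l)}(ii) is what makes the symbol scaling valid when $\alpha=1$. Since $w(1/m)^{-1}\le l(m)$ and $L^{\tilde\mu_R}$, $L^{\tilde\mu_R,\kappa}$ act on $C_b^{\infty}(\mathbf{R}^d)$ with bounds uniform in $R$ (by \textbf{A(w,l)}(iii), resp. (ii), and Lemma \ref{rep}), this gives $\left\vert\eta_{m,z}\right\vert_{s,\infty}\le C_s\,l(m)^s$ for every $s>0$; in particular $\left\vert L^{\mu,\kappa}\eta_{m,z}\right\vert_0\le C\,l(m)^{\kappa}$ for $\kappa\in(0,2)$, and, through Lemma \ref{deri} and \textbf{$\mathbf{\tilde{A}}(w,l,\gamma)$}(ii), $\left\vert\nabla\eta_{m,z}\right\vert_0+[\nabla\eta_{m,z}]_\beta\le C\,l(m)^{1+\beta}$ when $\alpha\in[1,2)$.

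Now the bilinear estimate. Using $|\rho|\le K$ from \textbf{H($K,\beta$)}, split the $\nu$-integral defining $\langle u,\eta_{m,z}\rangle_{t,z}$ into $\{|y|>1\}$ and $\{|y|\le1\}$. On $\{|y|>1\}$ the bounds are crude: $\nu(|y|>1)<\infty$, $|u(x+y)-u(x)|\le2\left\vert u\right\vert_0$, $|\eta_{m,z}(x+y)-\eta_{m,z}(x)|\le2$; for the H\"older seminorm one telescopes $A_1B_1-A_2B_2=(A_1-A_2)B_1+A_2(B_1-B_2)$, where $A_i,B_i$ are the increments of $u,\eta_{m,z}$ at the two base points, using $|u(x_1+y)-u(x_2+y)|\le Cw(|x_1-x_2|)^\beta\left\vert u\right\vert_{1+\beta,\infty}$ from $\eqref{mod1}$ and the Lipschitz-in-$x$ control of $\eta_{m,z}$. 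On $\{|y|\le1\}$, combine $\eqref{mod1}$ for $u$ (so $|u(x+y)-u(x)|\le Cw(|y|)^{\kappa_1}\left\vert L^{\mu,\kappa_1}u\right\vert_0\le Cw(|y|)^{\kappa_1}\left\vert u\right\vert_{1+\beta',\infty}$) with the bounds of the previous step for $\eta_{m,z}$, choosing the exponents $\kappa_1$ and $\kappa_2\le1+\beta$ exactly as \textbf{$\mathbf{\tilde{A}}(w,l,\gamma)$}(ii)--(iii) permits so that the $\nu$-integrals converge; this produces a bound of size $l(m)^{1+\beta}\left\vert u\right\vert_{1+\beta',\infty}$. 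For the H\"older seminorm on $\{a<|y|\le1\}$ with $a=|x_1-x_2|$, I would follow the scheme of the proof of Lemma \ref{cont2}: telescope, then apply the two-increment representation $\eqref{ddd}$ to $u$ and the step-one bounds to $\eta_{m,z}$, and convert the tails $\int w(|y|)^{\bullet}\nu(dy)$ into the correct power $w(a)^\beta$ through the substitution $r\mapsto\varsigma(r)=\nu(|y|>r)$ used there.

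The main obstacle is the case $\alpha\in[1,2)$: the bracket carries no compensator, while the increments of $u$ vanish only to first order at $y=0$, so one must peel off the linear parts $\chi_\alpha(y)\,y\cdot\nabla u(x)$ and $\chi_\alpha(y)\,y\cdot\nabla\eta_{m,z}(x)$ and estimate separately the resulting ``second order'' integrals $\int(y\cdot\nabla u)(y\cdot\nabla\eta_{m,z})\rho\,\nu(dy)$, $\int\nabla^\alpha u\,(y\cdot\nabla\eta_{m,z})\rho\,\nu(dy)$, $\int(y\cdot\nabla u)\,\nabla^\alpha\eta_{m,z}\,\rho\,\nu(dy)$ over $\{|y|\le1\}$; each is controlled by $\int_{|y|\le1}|y|^2\nu(dy)<\infty$ together with $\eqref{mod2}$, the bounds $\left\vert\nabla\eta_{m,z}\right\vert_0,[\nabla\eta_{m,z}]_\beta\le Cl(m)^{1+\beta}$ and $\left\vert\nabla u\right\vert_0,[\nabla u]_{\beta'}\le C\left\vert u\right\vert_{1+\beta',\infty}$ from Lemma \ref{deri}, the residual odd terms for $\alpha=1$ being killed by \textbf{A(w,l)}(ii) and, where it intervenes, \textbf{H($K,\beta$)}(ii). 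Collecting the pieces gives the displayed bilinear estimate, and the interpolation inequality turns it into $\eqref{cross}$. Apart from this decomposition, the substance of the argument is the bookkeeping of which H\"older exponents $\kappa_1,\kappa_2,\beta'$ may legitimately be used in $\eqref{mod1}$--$\eqref{mod2}$ so that every $\nu$-integral converges; conditions (ii)--(iii) of \textbf{$\mathbf{\tilde{A}}(w,l,\gamma)$} are tailored precisely to make this possible.
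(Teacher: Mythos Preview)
Your overall strategy---bound $\sup_{t,z}|\langle u,\eta_{m,z}\rangle_{t,z}|_{\beta}$ by $Cl(m)^{1+\beta}|u|_{1+\beta',\infty}$ for some $\beta'<\beta$, then interpolate---matches the paper, as does the scaling computation for $L^{\mu,\kappa}\eta_{m,z}$. The difference is in how the bilinear integral is handled, and the ``main obstacle'' you flag for $\alpha\in[1,2)$ is in fact not an obstacle.

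The paper's proof is uniform in $\alpha$: it applies \eqref{mod1} with a \emph{single} exponent $\kappa\in(1/2,1)$ (available for every $\alpha$ by \textbf{$\tilde{\mathbf A}(w,l,\gamma)$}(iii)) to \emph{both} factors, obtaining
\[
|u(x+y)-u(x)|\,|\eta_{m,z}(x+y)-\eta_{m,z}(x)|\le C\,w(|y|)^{2\kappa}\,|L^{\mu,\kappa}u|_0\,|L^{\tilde\mu_{1/m},\kappa}\eta|_0.
\]
Since $2\kappa>1$, Lemma~\ref{ess}(c) gives $\int_{|y|\le1}w(|y|)^{2\kappa}\nu(dy)<\infty$, and the sup-norm bound follows immediately. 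For the H\"older seminorm the paper does \emph{not} split at $|y|=a$ as in Lemma~\ref{cont2}; it simply telescopes and uses that the representation behind \eqref{mod1} turns the increment in $x$ into $[L^{\mu,\kappa}u]_{\beta}$ (resp.\ $[L^{\tilde\mu_{1/m},\kappa}\eta]_{\beta}$), which already carries the factor $w(a)^{\beta}$. No linear parts are peeled off, no case distinction on $\alpha$ is made, and \textbf{H($K,\beta$)}(ii) is never invoked here---only the bound $|\rho|\le K$.

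Your decomposition into $\nabla^{\alpha}u\cdot\nabla^{\alpha}\eta_{m,z}$, $(y\cdot\nabla u)(y\cdot\nabla\eta_{m,z})$, and cross terms can be made to work, but it costs you separate integrability checks (e.g.\ $\int_{|y|\le1}|y|\,w(|y|)^{\kappa}\nu(dy)$ for the cross terms), gradient bounds for $\eta_{m,z}$ that produce powers slightly worse than $l(m)^{1+\beta}$ unless the exponents are tuned carefully, and a spurious appeal to symmetry. The point you overlooked is that the product of two first-order increments already vanishes to order $w(|y|)^{2\kappa}$ with $2\kappa>1$, which is all that is needed; there is no gain in rewriting either factor with its compensator.
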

\begin{proof}
	Direct computation shows that for $\kappa\in\left( 0,1\right)$,
	\begin{eqnarray*}
		L^{\mu,\kappa}\eta_{m,z}\left(x\right)=w\left(m^{-1}\right)^{-\kappa}L^{\tilde{\mu}_{m^{-1}},\kappa}\eta\left(m\left( x-z\right)\right).
	\end{eqnarray*}
	
	By \textbf{$\mathbf{\tilde{A}}(w,l,\gamma)$}, there is $\kappa\in\left(1/2,1\right)$ such that $\int_1^{\infty}t^{\kappa-1}\gamma\left( t\right)^{-1}dt<\infty$. Apply $\eqref{mod1}$ with such a $\kappa$.
	\begin{eqnarray*}
		&&\left\vert\langle u,\eta_{m,z}\rangle_{t,z}\right\vert_0\\ &\leq& C\int\left\vert u\left(  x+y\right)-u\left(  x\right)\right\vert\left\vert \eta_{m,z}\left(  x+y\right)-\eta_{m,z}\left(  x\right)\right\vert\nu\left( dy\right)\\
		&\leq& Cw\left(m^{-1}\right)^{-\kappa}\left\vert L^{\mu,\kappa}u\right\vert_0\left\vert L^{\tilde{\mu}_{m^{-1}},\kappa}\eta\right\vert_0\int_{\left\vert y\right\vert\leq 1} w\left(\left\vert  y\right\vert\right)^{2\kappa}\nu\left( dy\right)+C\left\vert u\right\vert_0.
	\end{eqnarray*}
	According to Lemmas \ref{ess}, \ref{cont} and \cite[Proposition 4]{mf},
	\begin{eqnarray*}
		\left\vert\langle u,\eta_{m,z}\rangle_{t,z}\right\vert_0&\leq& Cw\left(m^{-1}\right)^{-\kappa}\left(\left\vert L^{\mu,\kappa}u\right\vert_0+\left\vert u\right\vert_0\right)\\
		&\leq& C l\left(m\right)^{\kappa}\left( \varepsilon \left\vert u\right\vert_{1+\beta,\infty}+C_{\varepsilon}\left\vert u\right\vert_{0}\right).
	\end{eqnarray*}
	
	For the difference estimate, let us set $a=\left\vert x_1-x_2\right\vert$ and denote
	\begin{eqnarray*}
		&& \left\vert \langle u,\eta_{m,z}\rangle_{t,z}\left( x_1\right)-\langle u,\eta_{m,z}\rangle_{t,z}\left( x_2\right)\right\vert\\
		&\leq& \vert\int_{\left\vert y\right\vert\leq 1}\left[ u\left(  x_1+y\right)-u\left(  x_1\right)- u\left(  x_2+y\right)+u\left(  x_2\right)\right]\\
		&&\quad\quad\cdot\left[ \eta_{m,z}\left(  x_1+y\right)-\eta_{m,z}\left(  x_1\right)\right]\rho\left( t,z,y\right) \nu\left( dy\right)\vert\\
		&+& \vert\int_{\left\vert y\right\vert\leq 1}\left[ u\left(  x_2+y\right)-u\left(  x_2\right)\right]\lbrack \eta_{m,z}\left(  x_1+y\right)-\eta_{m,z}\left(  x_1\right)\\
		&&\quad\quad-\eta_{m,z}\left(  x_2+y\right)+\eta_{m,z}\left(  x_2\right)\rbrack\rho\left(t, z,y\right) \nu\left( dy\right)\vert\\
		&+& \vert\int_{\left\vert y\right\vert> 1}\{\left[ u\left(  x_1+y\right)-u\left(  x_1\right)\right]\left[ \eta_{m,z}\left(  x_1+y\right)-\eta_{m,z}\left(  x_1\right)\right]\\
		&&\quad\quad-\left[ u\left(  x_2+y\right)-u\left(  x_2\right)\right]\left[ \eta_{m,z}\left(  x_2+y\right)-\eta_{m,z}\left(  x_2\right)\right]\}\rho\left( t,z,y\right) \nu\left( dy\right)\vert\\
		&:=& I_1+I_2+I_3.
	\end{eqnarray*}
	Similarly, we use $\eqref{mod1}$. Then for some $\kappa\in\left(1/2,1\right)$,
	\begin{eqnarray*}
		I_1 &\leq&Cw\left(m^{-1}\right)^{-\kappa}\sup_z\left\vert L^{\mu,\kappa}u\left( x_1+z\right)-L^{\mu,\kappa}u\left( x_2+z\right)\right\vert\left\vert L^{\tilde{\mu}_{m^{-1}},\kappa}\eta\right\vert_0\\
		&\leq& Cw\left(m^{-1}\right)^{-\kappa}w\left( a\right)^{\beta}\left\vert L^{\mu,\kappa}u\right\vert_{\beta,\infty}\\
		&\leq& C l\left(m\right)^{\kappa}w\left( a\right)^{\beta}\left( \varepsilon \left\vert u\right\vert_{1+\beta,\infty}+C_{\varepsilon}\left\vert u\right\vert_{0}\right).
	\end{eqnarray*}
	For the same $\kappa$,
	\begin{eqnarray*}
		I_2 &\leq&Cw\left(m^{-1}\right)^{-\kappa}\sup_z\left\vert L^{\tilde{\mu}_{m^{-1}},\kappa}\eta\left( m\left(x_1-z\right)\right)-L^{\tilde{\mu}_{m^{-1}},\kappa}\eta\left( m\left(x_2-z\right)\right)\right\vert\left\vert L^{\mu,\kappa}u\right\vert_0\\
		&\leq& C w\left(m^{-1}\right)^{-\kappa}l\left(m\right)^{\beta}w\left( a\right)^{\beta}\left\vert u\right\vert_{\kappa+\beta,\infty}\\
		&\leq& C l\left(m\right)^{1+\beta}w\left( a\right)^{\beta}\left( \varepsilon \left\vert u\right\vert_{1+\beta,\infty}+C_{\varepsilon}\left\vert u\right\vert_{0}\right).
	\end{eqnarray*}
	
	Besides,
	\begin{eqnarray*}
		I_3 &\leq& C\int_{\left\vert y\right\vert> 1}\vert\left[u\left(  x_1+y\right)-u\left(  x_1\right)\right]\lbrack\eta_{m,z}\left(  x_1+y\right)-\eta_{m,z}\left(  x_1\right)\\
		&&\qquad\qquad-\eta_{m,z}\left(  x_2+y\right)+\eta_{m,z}\left(  x_2\right)\rbrack\vert \nu\left( dy\right)\\
		&&+C\int_{\left\vert y\right\vert> 1}\vert\left[u\left(  x_1+y\right)- u\left(  x_2+y\right)-u\left(  x_1\right)+u\left(  x_2\right)\right]\\
		&&\qquad\qquad\left[ \eta_{m,z}\left(  x_2+y\right)-\eta_{m,z}\left(  x_2\right)\right]\vert\nu\left( dy\right)\\
		&:=&I_{31}+I_{32},
	\end{eqnarray*}
	where
	\begin{eqnarray*}
		I_{31} &\leq& C\left\vert u\right\vert_0\int_{\left\vert y\right\vert> 1}\vert\eta_{m,z}\left(  x_1+y\right)-\eta_{m,z}\left(  x_2+y\right)-\eta_{m,z}\left(  x_1\right)\\
		&&+\eta_{m,z}\left( x_2\right)\vert \nu\left( dy\right)
		\leq  C\left\vert u\right\vert_0l\left(m\right)^{\beta}w\left( a\right)^{\beta},
	\end{eqnarray*}
	and obviously,
	\begin{eqnarray*}
		I_{32} &\leq&  Cw\left( a\right)^{\beta}\left\vert u\right\vert_{\beta,\infty}\leq  Cw\left( a\right)^{\beta}\left( \varepsilon \left\vert u\right\vert_{1+\beta,\infty}+C_{\varepsilon}\left\vert u\right\vert_{0}\right).
	\end{eqnarray*}
	
	Summarizing, for any $z\in\mathbf{R}^d$, 
	\begin{eqnarray*}
		\left[\langle u,\eta_{m,z}\rangle_{t,z}\right]_{\beta}\leq Cl\left(m\right)^{1+\beta}\left( \varepsilon \left\vert u\right\vert_{1+\beta,\infty}+C_{\varepsilon}\left\vert u\right\vert_{0}\right).
	\end{eqnarray*}
	
	It follows immediately from \cite[Proposition 1]{mf} that
	\begin{eqnarray*}
		&&\sup_{t,z}\left\vert \langle u,\eta_{m,z}\rangle_{t,z}\right\vert_{\beta,\infty}\\
		&\leq& C\sup_{t,z}\left\vert \langle u,\eta_{m,z}\rangle_{t,z}\right\vert_{\beta}\leq C l\left(m\right)^{1+\beta}\left( \varepsilon \left\vert u\right\vert_{1+\beta,\infty}+C_{\varepsilon}\left\vert u\right\vert_{0}\right).
	\end{eqnarray*}
\end{proof}

\subsection{Operators with Space-Dependent Coefficients}
In this section, we study the operator
\begin{eqnarray*}
\mathcal{G}\varphi\left( x\right)&:=&\int\left[\varphi\left( x+G\left( x\right)y\right)-\varphi\left( x\right)-\chi_{\alpha}\left( y\right)G\left( x\right)y\cdot \nabla \varphi\left( x\right) \right]\nu\left(d y\right).
\end{eqnarray*}

We define the norm of an $d\times d$-invertible matrix function $G\left( x\right), x\in\mathbf{R}^d$ to be its operator norm, i.e., 
\begin{equation*}
\left\vert G\left( x\right) \right\vert :=\sup_{y\in \mathbf{R}%
	^{d},\left\vert y\right\vert =1}\left\vert G\left( x\right) y\right\vert ,
\end{equation*}%
and 
\begin{equation*}
\Vert G\Vert :=\sup_{x\in \mathbf{R}^{d}}\left\vert G\left( x\right)
\right\vert .
\end{equation*}%
 If all entries of $G$ are constants, then $G$ is viewed as a constant function and definitions above apply. Note $\left\Vert G\right\Vert $ being finite implies finiteness of each
 entry. If furthermore $\left\vert \det G\left( z\right)\right\vert\geq c_0$ for some $c_0>0$, then $\left\Vert G^{-1}\right\Vert$ is also finite.

\begin{lemma}\label{lem1}
	Let $G$ be an invertible $d\times d$-matrix and $\beta>0$. $f\in\tilde{C}^{\beta}_{\infty,\infty}\left(\mathbf{R}^d\right)$. $g\left(x\right):=f\left( Gx\right), x\in\mathbf{R}^d$. Then,
	\begin{eqnarray}
	\left\vert g\right\vert_{\beta,\infty}\leq C\left\vert f\right\vert_{\beta,\infty}
	\end{eqnarray}
	for some $C$ only depending on $\left\Vert G^{-1}\right\Vert$ and $\left\Vert G\right\Vert$.
\end{lemma}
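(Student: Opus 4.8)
The plan is to work with the Besov-type characterization of the norm $|\cdot|_{\beta,\infty}$, i.e. $|g|_{\beta,\infty} = \sup_{j\in\mathbf{N}} w(N^{-j})^{-\beta}|g\ast\varphi_j|_0$, and relate the Littlewood--Paley pieces of $g(x)=f(Gx)$ to those of $f$. Since $G$ is a fixed invertible matrix, the change of variables $y = Gx$ gives $g\ast\varphi_j(x) = \int f(Gx - z)\varphi_j(z)\,dz = |\det G|^{-1}\int f(Gx - Gw)\varphi_j(Gw)\,dw$, so in the $y$-variable $g\ast\varphi_j$ is, up to the change of variables, the convolution of $f$ with the dilated-by-$G$ kernel $\tilde\varphi_j(w):=|\det G|^{-1}\varphi_j(Gw)$. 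On the Fourier side this is transparent: $\widehat g(\xi) = |\det G|^{-1}\widehat f((G^{-1})^{\top}\xi)$, hence $\widehat{g\ast\varphi_j}(\xi) = |\det G|^{-1}\widehat f((G^{-1})^{\top}\xi)\,\mathcal{F}\varphi_j(\xi)$, and $\mathcal{F}\varphi_j(\xi) = \phi(N^{-j}\xi)$ is supported in the annulus $N^{-1}\le |N^{-j}\xi|\le N$, i.e. $N^{j-1}\le |\xi|\le N^{j+1}$.

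The key point is a frequency-localization argument. Because $(G^{-1})^{\top}$ and $G^{\top}$ are bounded with bounded inverses (with bounds controlled by $\|G\|$ and $\|G^{-1}\|$), the set where $\widehat{g\ast\varphi_j}$ is nonzero, mapped by $(G^{-1})^{\top}$, lands in an annulus $\{c_1 N^{j} \le |\eta| \le c_2 N^{j}\}$ for constants $c_1,c_2$ depending only on $\|G\|,\|G^{-1}\|$. Consequently there is a fixed integer $m_0 = m_0(N,\|G\|,\|G^{-1}\|)$ so that $g\ast\varphi_j$ only ``sees'' the frequencies of $f$ in the band $|j'-j|\le m_0$; concretely, writing $\Phi_j := \sum_{|j'-j|\le m_0}\varphi_{j'}$ (with the usual modification near $j=0$, cf. the $\tilde\varphi_j$ in the proof of Lemma~\ref{deri}), one has $f\ast\varphi_{j'} = (f\ast\varphi_{j'})\ast \Phi_{j'}$ and, after composing with $G$, $g\ast\varphi_j = \sum_{|j'-j|\le m_0} [(f\ast\varphi_{j'})(G\,\cdot)]\ast \psi_{j,j'}$ for suitable Schwartz kernels $\psi_{j,j'}$ whose $L^1$ norms are bounded uniformly in $j$ (again by a scaling/change-of-variables computation using only $\|G\|$, $\|G^{-1}\|$, and the fixed profile $\phi$). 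Then by Young's inequality $|g\ast\varphi_j|_0 \le C\sum_{|j'-j|\le m_0}|f\ast\varphi_{j'}|_0 \le C\sum_{|j'-j|\le m_0} w(N^{-j'})^{\beta}|f|_{\beta,\infty}$, and since $w$ is a scaling function the ratios $w(N^{-j'})/w(N^{-j})$ for $|j'-j|\le m_0$ are bounded above and below by constants depending only on $m_0$ and the scaling factor $l$ (hence only on $N$, $\|G\|$, $\|G^{-1}\|$). Multiplying by $w(N^{-j})^{-\beta}$ and taking the supremum over $j$ yields $|g|_{\beta,\infty}\le C|f|_{\beta,\infty}$ with $C = C(\|G\|,\|G^{-1}\|,N,\beta)$, and $N$ and $\beta$ are fixed data, so this is the claimed bound.

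Alternatively, and perhaps more cleanly given what is already available in the paper, I would bypass Littlewood--Paley and use the equivalence of $|\cdot|_{\beta,\infty}$ with the generalized Hölder norm $|\cdot|_{\beta} = |u|_0 + \sup_{x,h\ne0} w(|h|)^{-\beta}|u(x+h)-u(x)|$ (stated just before \cite[Proposition~1]{mf}, for $\beta\in(0,1/\alpha)$; for general $\beta>0$ one iterates using Lemma~\ref{deri} to peel off derivatives). For $g(x)=f(Gx)$ one has $|g|_0=|f|_0$ and $|g(x+h)-g(x)| = |f(Gx+Gh)-f(Gx)| \le [f]_\beta\, w(|Gh|)^{\beta} \le [f]_\beta\, w(\|G\|\,|h|)^{\beta}$; since $w(\|G\|\,|h|)\le l(\|G\|\vee 1)\,w(|h|)$ by the scaling inequality \eqref{scale} when $\|G\|\ge 1$ (and is trivially $\le w(|h|)$ times a constant when $\|G\|<1$, using monotonicity built from \eqref{scale}), we get $[g]_\beta \le l(\|G\|\vee1)^{\beta}[f]_\beta$. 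Thus $|g|_\beta \le C(\|G\|)|f|_\beta$, and invoking the norm equivalence once more gives $|g|_{\beta,\infty}\le C|f|_{\beta,\infty}$; here $\|G^{-1}\|$ enters only through the norm-equivalence constants. The main obstacle in either route is the bookkeeping for general $\beta>0$ (not just $\beta<1/\alpha$): in the Fourier approach one must verify the uniform-in-$j$ $L^1$ bounds on the rescaled kernels and the near-$j=0$ modifications; in the Hölder approach one must handle the derivative terms, but Lemma~\ref{deri} together with the chain rule (each derivative of $g$ being a bounded-coefficient combination of derivatives of $f$ composed with $G$) reduces this to the case already treated. I would present the Hölder-norm argument as the main line, since it is shortest and uses only the scaling property \eqref{scale} and the norm equivalence of \cite{mf}.
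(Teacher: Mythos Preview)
Your first (Fourier/Littlewood--Paley) approach is essentially the paper's proof. The paper also rewrites $g\ast\varphi_j$ on the Fourier side as $\mathcal{F}^{-1}[\phi_j(G\xi)\hat f]$, observes that the support of $\phi_j(G\cdot)$ is contained in an annulus $\{N^{m(j)}\le|\xi|\le N^{n(j)}\}$ with $n(j)-m(j)$ bounded independently of $j$ (in terms of $\|G\|,\|G^{-1}\|$), inserts $\sum_{i=m(j)}^{n(j)}\phi_i$, and concludes via Young's inequality using that $|\mathcal{F}^{-1}[\phi_j(G\cdot)]|_{L^1}$ is bounded uniformly in $j$. Your write-up is a correct paraphrase of this argument.

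Your second (H\"older) route is genuinely different and pleasantly short, but it only proves the lemma in the range $\beta\in(0,1/\alpha)$ where the equivalence $|\cdot|_\beta\sim|\cdot|_{\beta,\infty}$ is available. The lemma is stated (and used, via Proposition~\ref{ppr1}, for the index $1+\beta$) for arbitrary $\beta>0$, so this restriction matters. The extension you sketch---peeling off derivatives with Lemma~\ref{deri} and the chain rule---does not close the gap as written: Lemma~\ref{deri} bounds $|D^\gamma u|_{\delta,\infty}$ \emph{from above} by $|u|_{\beta+\delta,\infty}$, not the other way around, and the available reverse equivalence (Proposition~\ref{thm4}) is in terms of $L^{\mu}$, not classical derivatives. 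Since $L^{\mu}\big(f(G\cdot)\big)$ is not $(L^{\mu}f)(G\cdot)$ but rather an operator with the push-forward measure, the reduction is not immediate. I would therefore present your first approach as the main argument; it handles all $\beta>0$ at once with no extra bookkeeping, which is exactly why the paper chose it.
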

\begin{proof}
	Consider the mapping $T:\mathbf{R}^d\to \mathbf{R}^d$ such that $T\left( x\right)=Gx$. Then $T^{-1}\left( x\right)=G^{-1}x$. Clearly, both $T$ and $T^{-1}$ are continuous and $ \left\Vert T\right\Vert^{-1}=\left\Vert G\right\Vert^{-1}\leq \left\Vert T^{-1}\right\Vert=\left\Vert G^{-1}\right\Vert$. For any $j\neq 0$, 
	\begin{eqnarray*}
		\left\vert g\ast \varphi_j\right\vert _{0}&=& \sup_x\left\vert\int f\left( Gy\right)\varphi_{j} \left( x-y\right)dy\right\vert\\
		&=& \sup_x\frac{1}{\left\vert \det G\right\vert}\left\vert\int f\left( y\right)\varphi_{j} \left( G^{-1}x-G^{-1}y\right)dy\right\vert\\
		&=& \left\vert\mathcal{F}^{-1}\left[\phi_j\left( G\xi\right)\mathcal{F}f\right]\right\vert_0.
	\end{eqnarray*}
	Note $\phi_j\left( G\cdot\right)$ is supported on $\{\xi: N^{j-1}\leq \left\vert G\xi\right\vert\leq N^{j+1}\}\subset \{\xi: N^{j-1}\left\Vert G\right\Vert^{-1}\leq \left\vert \xi\right\vert\leq N^{j+1}\left\Vert G^{-1}\right\Vert \}$. Denote 
	\begin{eqnarray*}
		n\left(j\right)&=&\min \{i:N^{j+1}\left\Vert G^{-1}\right\Vert\leq N^{i}\}\vee 1 ,\\
		m\left(j\right)&=&\max \{i:N^{i}\leq N^{j-1}\left\Vert G\right\Vert^{-1}\}\vee 0.
	\end{eqnarray*}
	Then $n\left(j\right)=n\left(1\right)+j-1$, $m\left(j\right)=m\left(1\right)+j-1$, and that $n\left(j\right)-m\left(j\right)\leq n\left(1\right)-m\left(1\right)+1$ which is independent of $j$. Moreover, 
	\begin{eqnarray*}
		\phi_j\left( G\xi\right)=\phi_j\left( G\xi\right)\sum_{i=m\left(j\right)}^{n\left(j\right)}\phi_i\left(\xi\right).
	\end{eqnarray*}
	Therefore,
	\begin{eqnarray*}
		\left\vert g\ast \varphi_j\right\vert _{0}
		&=& \left\vert\mathcal{F}^{-1}\left[\phi_j\left( G\xi\right)\sum_{i=m\left(j\right)}^{n\left(j\right)}\phi_i\left(\xi\right)\mathcal{F}f\right]\right\vert_0\\
		&\leq& \left(n\left(1\right)-m\left(1\right)+1\right)\sup_j\left\vert f\ast \varphi_j\right\vert _{0} \left\vert \mathcal{F}^{-1}\left[\phi_j\left( G\xi\right)\right]\right\vert _{L^1\left(\mathbf{R}^d\right)}\\
		&\leq& Cw\left( N^{-j}\right)^{\beta}\left\vert f\right\vert_{\beta,\infty}.
	\end{eqnarray*}
	
	Similarly, if $j=0$,
	\begin{eqnarray*}
		\left\vert g\ast \varphi_0\right\vert _{0}&=& \left\vert\mathcal{F}^{-1}\left[\widehat{\varphi_0}\left( G\xi\right)\mathcal{F}f\right]\right\vert_0,
	\end{eqnarray*}
	and $supp \left(\widehat{\varphi_0}\left( G\xi\right)\right)=\{\xi: \left\vert G\xi\right\vert\leq N\}\subset \{\xi: \left\vert \xi\right\vert\leq N\left\Vert G^{-1}\right\Vert \}$. Denote $k=\min \{i:N\left\Vert G^{-1}\right\Vert\leq N^{i}\}\vee 1$. Then, 
	\begin{eqnarray*}
		\widehat{\varphi_0}\left( G\xi\right)=\widehat{\varphi_0}\left( G\xi\right)\left(\widehat{\varphi_0}\left( \xi\right)+\sum_{i=1}^{k}\phi_i\left(\xi\right)\right).
	\end{eqnarray*}
	Therefore,
	\begin{eqnarray*}
		\left\vert g\ast \varphi_j\right\vert _{0}
		&=& \left\vert\mathcal{F}^{-1}\left[\widehat{\varphi_0}\left( G\xi\right)\left(\widehat{\varphi_0}\left( \xi\right)+\sum_{i=1}^{k}\phi_i\left(\xi\right)\right)\mathcal{F}f\right]\right\vert_0\\
		&\leq& C\sup_j\left\vert f\ast \varphi_j\right\vert _{0} \left\vert \mathcal{F}^{-1}\left[\widehat{\varphi_0}\left( G\xi\right)\right]\right\vert _{L^1\left(\mathbf{R}^d\right)}\\
		&\leq& Cw\left( N^{-j}\right)^{\beta}\left\vert f\right\vert_{\beta,\infty}.
	\end{eqnarray*}
	
	Summarizing, $\left\vert g\right\vert_{\beta,\infty}\leq C\left\vert f\right\vert_{\beta,\infty}$.
\end{proof}

\begin{proposition}\label{ppr1}
	Let $\nu$ be a L\'{e}vy measure satisfying \textbf{A(w,l)} and $G$ be an invertible $d\times d$-matrix. For any function $f\in  C_b^{2}\left(\mathbf{R}^d\right)$, 
	\begin{eqnarray*}
		Lf\left( x\right)&:=&\int\left[f\left( x+Gy\right)-f\left( x\right)-\chi_{\alpha}\left( y\right)Gy\cdot \nabla f\left( x\right) \right]\nu\left(d y\right).
	\end{eqnarray*}
	Then for $\beta\in\left( 0,1/\alpha\right)$, there exists $C$ depending on $\left\Vert G^{-1}\right\Vert$ and $\left\Vert G\right\Vert$ such that
	\begin{eqnarray*}
		\left\vert L f\right\vert_{\beta,\infty}\leq C\left\vert  f\right\vert_{1+\beta,\infty}.
	\end{eqnarray*}
\end{proposition}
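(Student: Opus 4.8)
The plan is to reduce the claim to the constant-coefficient estimate of Lemma \ref{cont} together with the dilation estimate of Lemma \ref{lem1}, via a linear change of variables. Set $g:=f\circ G$, i.e. $g(x)=f(Gx)$, $x\in\mathbf{R}^d$. A direct computation gives the exact identity
\begin{equation*}
L^{\nu}g(x)=Lf(Gx),\qquad x\in\mathbf{R}^d,
\end{equation*}
equivalently $Lf=(L^{\nu}g)\circ G^{-1}$. Indeed, $g(x+y)=f(Gx+Gy)$ and $\nabla g(x)=G^{\top}(\nabla f)(Gx)$, so $y\cdot\nabla g(x)=(Gy)\cdot(\nabla f)(Gx)$; substituting these into the definition of $L^{\nu}$ and noting that the truncation weight $\chi_{\alpha}(y)$ appearing in $Lf$ is taken in the original variable $y$ (hence matches the one in $L^{\nu}$ verbatim), the two integrands coincide. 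In particular no correction drift arises, and the identity uses nothing about $\nu$ beyond being a L\'evy measure.

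Next I would chain three estimates. Since $f\in C_b^{2}$ and $G$ is linear, $g\in C_b^{2}$; if $|f|_{1+\beta,\infty}=\infty$ the assertion is vacuous, so assume $f\in\tilde C^{1+\beta}_{\infty,\infty}$. First, by Lemma \ref{lem1}, applied with the matrix $G$ and the smoothness exponent $1+\beta$ (the proof of Lemma \ref{lem1} is valid for any positive exponent), $|g|_{1+\beta,\infty}\le C_1|f|_{1+\beta,\infty}$ with $C_1=C_1(\|G\|,\|G^{-1}\|)$. Second, because $\nu$ satisfies \textbf{A(w,l)}, Lemma \ref{cont} with $\kappa=1$ (so that $L^{\nu,1}=L^{\nu}$ on $\tilde C^{1+\beta}_{\infty,\infty}$, since $\psi^{\nu,1}=\psi^{\nu}$) yields $|L^{\nu}g|_{\beta,\infty}\le C_2|g|_{1+\beta,\infty}$; in particular $L^{\nu}g\in\tilde C^{\beta}_{\infty,\infty}$. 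Third, writing $Lf=(L^{\nu}g)\circ G^{-1}$ and applying Lemma \ref{lem1} once more, now with the matrix $G^{-1}$ (whose relevant norms are $\|G^{-1}\|$ and $\|(G^{-1})^{-1}\|=\|G\|$) and exponent $\beta$, gives $|Lf|_{\beta,\infty}\le C_3|L^{\nu}g|_{\beta,\infty}$ with $C_3=C_3(\|G\|,\|G^{-1}\|)$. Composing, $|Lf|_{\beta,\infty}\le C_1C_2C_3|f|_{1+\beta,\infty}$, and $C:=C_1C_2C_3$ depends only on $\|G\|,\|G^{-1}\|$ (besides the fixed data $\beta,d,N,w,l$), as required.

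There is no genuine obstacle here: once the change-of-variables identity is written down, the estimate is purely a matter of citing Lemma \ref{cont} for the constant-coefficient operator and Lemma \ref{lem1} for the dilations on both sides. The only points needing a word of care are that the compensator terms match exactly --- which is automatic because $Lf$ is defined with $\chi_{\alpha}(y)$ rather than $\chi_{\alpha}(Gy)$, so, unlike in the general nonlinear $\mathcal{G}$, no drift defect appears --- and that Lemma \ref{lem1}, although stated with a generic exponent $\beta$, never uses $\beta\le1$ and therefore applies verbatim with exponent $1+\beta$. This clean reduction is also what later makes $\mathcal{G}$ with a merely H\"older-continuous coefficient $G(x)$ tractable: one freezes $G$, uses this proposition, and then controls the $x$-dependence through \textbf{G($c_0, K,\beta$)}.
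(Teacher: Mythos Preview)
Your proof is correct and follows essentially the same route as the paper's: define $g=f\circ G$, verify the identity $L^{\nu}g(x)=Lf(Gx)$, and then chain Lemma \ref{lem1} (for $G$, at smoothness $1+\beta$), Lemma \ref{cont} (with $\kappa=1$), and Lemma \ref{lem1} again (for $G^{-1}$, at smoothness $\beta$). Your write-up is in fact more careful than the paper's, making explicit why the compensator terms match (because the truncation $\chi_\alpha$ is in the original $y$-variable) and noting that Lemma \ref{lem1} applies for any positive exponent.
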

\begin{proof}
	If $g\left(x\right):=f\left( Gx\right)$. Then $Lf\left( Gx\right)=L^{\nu}g\left( x\right)$. By previous continuity and equivalence results and Lemma \ref{lem1}, 
	\begin{eqnarray*}
		\left\vert Lf\right\vert_{\beta,\infty}\leq C\left\vert Lf\left( G\cdot\right)\right\vert_{\beta,\infty}=\left\vert L^{\nu}g \right\vert_{\beta,\infty} \leq C\left\vert g \right\vert_{1+\beta,\infty}\leq C\left\vert f \right\vert_{1+\beta,\infty}.
	\end{eqnarray*}	
\end{proof}

Let us denote 
\begin{eqnarray}
	\nabla^{\alpha,z}u\left( x;y\right)&=&u\left( x+G\left( z\right)y\right)-u\left( x\right)-\chi_{\alpha}\left( y\right)G\left( z\right)y\cdot \nabla u\left( x\right),\notag\\
	L_{z} u\left(x\right)&=&\int \nabla^{\alpha,z}u\left( x;y\right)\nu\left( dy\right),\label{opp1}\\
	\bar{G}_{z,z'}&:=&\left\Vert G\left( z\right)-G\left( z'\right)\right\Vert,\notag
\end{eqnarray}
and
\begin{eqnarray*}
	g\left(z,z'\right)
	= \left\{ 
	\begin{array}{cc}
		w\left(\bar{G}_{z,z'}^{-1}\right)^{-1} & \text{if } \alpha\in\left( 0,1\right) , \\ 
		w\left( \bar{G}_{z,z'}^{-1}\right)^{-1}w\left( \bar{G}_{z,z'}\right)^{-\delta'} \vee \bar{G}_{z,z'} & \text{if } \alpha=1, \\ 
		\bar{G}_{z,z'} & \text{if } \alpha\in\left( 1,2\right) . 
	\end{array}%
	\right. 
\end{eqnarray*}
\begin{lemma}\label{prr1}
	Let $\beta\in\left( 0,1/\alpha\right)$, $\nu$ be a L\'{e}vy measure satisfying \textbf{$\mathbf{\tilde{A}}(w,l,\gamma)$}, and $G\left( z\right),\forall z\in\mathbf{R}^d$ satisfy \textbf{G($c_0, K,\beta$)}. If $u\in\tilde{C}^{1+\beta}\left( \mathbf{R}^d\right)$, then,
	\begin{eqnarray*}
		\left\vert L_{z} u-L_{z'} u\right\vert_{0}&\leq& Cg\left(z,z'\right)\left\vert  u\right\vert_{1+\beta',\infty},\\
		\left[ L_{z} u-L_{z'} u\right]_{\beta}&\leq& C \left(\bar{G}_{z,z'}\right)^{\sigma}\left\vert u\right\vert_{1+\beta,\infty}
	\end{eqnarray*}
	for some $\beta'\in\left( 0,\beta\right),\sigma\in\left( 0,1\right)$. $C$ is independent $z,z'$ and $u$.
\end{lemma}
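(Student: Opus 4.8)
The plan is to reduce everything to the space-independent estimates already proved (Lemma~\ref{dif}, Lemma~\ref{diff}, Proposition~\ref{ppr1}) by writing the difference operator $L_z - L_{z'}$ as an integral against $\nabla^{\alpha,z}p - \nabla^{\alpha,z'}p$ in the representation formulas, and then carefully splitting the $y$-integral into the region $\{|y|\le \bar G_{z,z'}^{-1}\}$ (where the two matrices act almost identically and one uses the second-order smoothness of $u$) and $\{|y| > \bar G_{z,z'}^{-1}\}$ (where one simply bounds each term separately using $\eqref{mod1}$ or Lemma~\ref{deri}). The quantity $g(z,z')$ is precisely designed so that both pieces are controlled by $g(z,z')$ times $|u|_{1+\beta',\infty}$; the three cases $\alpha\in(0,1)$, $\alpha=1$, $\alpha\in(1,2)$ differ in which compensation term $\chi_\alpha(y)$ carries and hence in how the small-$y$ region must be estimated, which is why $g$ has three branches. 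Throughout I would use \textbf{G($c_0,K,\beta$)}(ii) to keep $\|G(z)\|,\|G(z')\|\le K$ and $\|G(z)^{-1}\|,\|G(z')^{-1}\|\le c_0^{-1}$ uniform, so that all constants from Lemma~\ref{lem1} and Proposition~\ref{ppr1} are uniform in $z,z'$.

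For the $|\cdot|_0$-estimate, fix $x$ and write
\begin{eqnarray*}
L_z u(x)-L_{z'}u(x)=\int_{|y|\le \bar G_{z,z'}^{-1}}\big(\nabla^{\alpha,z}u(x;y)-\nabla^{\alpha,z'}u(x;y)\big)\nu(dy)+\int_{|y|>\bar G_{z,z'}^{-1}}(\cdots)\nu(dy).
\end{eqnarray*}
On the large-$y$ part I bound $|\nabla^{\alpha,z}u(x;y)|$ and $|\nabla^{\alpha,z'}u(x;y)|$ separately: for $\alpha\in(0,1)$ there is no compensator, and $\eqref{mod1}$ with a small $\kappa$ gives $|u(x+G(z)y)-u(x)|\le C w(K|y|)^{\kappa}|L^{\mu,\kappa}u|_0\le C w(|y|)^{\kappa}|u|_{1+\beta',\infty}$, which integrated over $\{|y|>\bar G_{z,z'}^{-1}\}$ against $\nu$ produces (via \textbf{A(w,l)}(iii) and the tail behaviour of $\nu$) a bound of order $w(\bar G_{z,z'}^{-1})^{-1}=g(z,z')$; for $\alpha\in(1,2)$ one uses instead $\int_{|y|>1}|\nabla^{\alpha}\!\cdot|\,\nu(dy)\lesssim |u|_0+|u|_{1,\infty}$ together with $\int_{1\ge|y|>\bar G_{z,z'}^{-1}}$ handled by the second difference $G(z)y-G(z')y$, whose length is $\le \bar G_{z,z'}|y|$, yielding the factor $\bar G_{z,z'}=g(z,z')$; the case $\alpha=1$ is the awkward interpolation of the two, and the $\vee$ in the definition of $g$ reflects that one must take whichever of the two regimes dominates depending on whether $\bar G_{z,z'}$ is small. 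On the small-$y$ part I Taylor-expand: $\nabla^{\alpha,z}u(x;y)-\nabla^{\alpha,z'}u(x;y)$ equals an integral of second-order differences of $u$ along the segment joining $x+G(z')y$ to $x+G(z)y$, so its modulus is $\lesssim \sup_{|h|\le \bar G_{z,z'}|y|}|\nabla^{\alpha}(\cdot;h)|\cdot$something, and applying $\eqref{mod2}$ with an appropriate $\kappa\in(0,2)$ gives $w(\bar G_{z,z'}|y|)^{\kappa}$; integrating against $\nu$ over $\{|y|\le \bar G_{z,z'}^{-1}\}$ and using \textbf{A(w,l)}(iii) again yields $g(z,z')$ up to constants, after possibly shrinking $\beta'$ to absorb the exponent mismatch.

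For the Hölder seminorm $[L_z u-L_{z'}u]_\beta$, set $a=|x_1-x_2|$ and run the same region split, but now with the integrand $\nabla^{\alpha,z}(u(x_1;y)-u(x_2;y))-\nabla^{\alpha,z'}(u(x_1;y)-u(x_2;y))$, using $\eqref{ddd}$ / \cite[Proposition 1]{mf} to convert the $x_1,x_2$ difference of $L^{\mu,\kappa}u$ into a factor $w(a)^\beta$ exactly as in the proof of Lemma~\ref{cont2}. One gets a bound $C\,\Theta(\bar G_{z,z'})\,w(a)^\beta|u|_{1+\beta,\infty}$ where $\Theta$ is again a power of $\bar G_{z,z'}$ (positive, by the scalability exponents $\alpha_1,\alpha_2$ and the integrability conditions in \textbf{$\mathbf{\tilde A}(w,l,\gamma)$}(ii)); choosing the free exponents so that $\Theta(r)\le C r^{\sigma}$ for some $\sigma\in(0,1)$ on the relevant range of $\bar G_{z,z'}$ gives the claimed $[L_z u-L_{z'}u]_\beta\le C\bar G_{z,z'}^{\sigma}|u|_{1+\beta,\infty}$.

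The main obstacle is the case $\alpha=1$. There the compensator $\chi_1(y)y$ is present only for $|y|\le 1$, so the small-$y$ and large-$y$ analyses interact with the region boundary $\bar G_{z,z'}^{-1}$ in a way that does not separate cleanly; one must further split $\{|y|\le 1\}$ into $\{|y|\le \bar G_{z,z'}^{-1}\}$ and $\{\bar G_{z,z'}^{-1}<|y|\le 1\}$ and on the middle annulus control $\nabla^{1,z}u-\nabla^{1,z'}u$ by a combination of a first-order term (giving $\bar G_{z,z'}$) and a $w$-modulus term (giving $w(\bar G_{z,z'}^{-1})^{-1}w(\bar G_{z,z'})^{-\delta'}$, where the extra $w(\bar G_{z,z'})^{-\delta'}$ comes from borrowing $\delta'$ regularity against the logarithmically divergent tail), which is exactly the max appearing in $g$ for $\alpha=1$; verifying that the $\delta,\delta'$ integrability conditions in \textbf{$\mathbf{\tilde A}(w,l,\gamma)$}(iii) are strong enough to make every one of these integrals finite uniformly in $z,z'$ is the technical heart of the argument, and the reason assumption~(iii) is stated separately for $\alpha=1$.
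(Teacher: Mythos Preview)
Your overall strategy matches the paper's: exploit the representation from Lemma~\ref{diff} to write $\nabla^{\alpha,z}u(x;y)-\nabla^{\alpha,z'}u(x;y)$ as an integral of $L^{\mu,\kappa}u$ against the density difference $\nabla^{\alpha}p^{R}(\cdot;-R^{-1}G(z)y)-\nabla^{\alpha}p^{R}(\cdot;-R^{-1}G(z')y)$, then use Lemma~\ref{lemma3} to extract a factor of $\bar G_{z,z'}$ via the mean value theorem on the smooth density $p^R$. That is the key device, and you have it.

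There is, however, a concrete error in your $\alpha\in(1,2)$ case for the sup-norm bound. You propose splitting at $|y|=\bar G_{z,z'}^{-1}$ and then further decomposing the large-$y$ part into $\{|y|>1\}$ and $\{\bar G^{-1}<|y|\le 1\}$. But in the interesting regime $\bar G_{z,z'}<1$ the latter annulus is empty, and your stated bound $\int_{|y|>1}|\nabla^{\alpha}\cdot|\,\nu(dy)\lesssim |u|_0+|u|_{1,\infty}$ carries no factor of $\bar G$, so it cannot yield $|L_zu-L_{z'}u|_0\le C\bar G\,|u|_{1+\beta',\infty}$. The paper instead splits at $|y|=1$ (not $\bar G^{-1}$) for $\alpha\ge 1$: on $\{|y|\le 1\}$ one takes $R=|y|$ in the representation and bounds the density difference by $C\bar G\,(\gamma(t)^{-1}\wedge\gamma(t)^{-2})$, while on $\{|y|>1\}$ one estimates the integrand directly as $|u(x+Gy)-u(x+G'y)-(G-G')y\cdot\nabla u(x)|\le 2\bar G|y|\,|\nabla u|_0$, which integrates against $\nu$ since $\int_{|y|>1}|y|\,\nu(dy)<\infty$ under \textbf{A(w,l)}(iii). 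Your split at $\bar G^{-1}$ is correct only for $\alpha\in(0,1)$, where $g(z,z')=w(\bar G^{-1})^{-1}$ and there is no compensator; the paper does exactly that in this case, with $R=\bar G|y|$ on the inner region.

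For the H\"older seminorm, the paper splits at $a=|x_1-x_2|$ uniformly in $\alpha$ (as in Lemma~\ref{cont2}), not at $\bar G^{-1}$; the $\bar G^{\sigma}$ factor then comes from the density-difference bound $\int|\nabla^{\alpha}p^R(\cdot;-R^{-1}Gy)-\nabla^{\alpha}p^R(\cdot;-R^{-1}G'y)|\le C\bar G^{\sigma}\gamma(t)^{-1-\sigma}$ via interpolation in Lemma~\ref{lemma3}. If by ``same region split'' you meant the $a$-split from Lemma~\ref{cont2}, you are aligned with the paper; if you meant $\bar G^{-1}$, that will not cleanly separate the $w(a)^\beta$ and $\bar G^\sigma$ factors.
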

\begin{proof}
	Write for simplicity $G=G\left( z\right)$, $G'=G\left( z'\right)$ and $\bar{G}=\bar{G}_{z,z'}$. Use $\eqref{tran2}$.
	\begin{eqnarray*}
	&& \nabla^{\alpha,z}u\left( x;y\right)-\nabla^{\alpha,z'}u\left( x;y\right)\notag\\
	&=& C w\left( R\right)^{\kappa}\int_0^{\infty}t^{\kappa-1} \int L^{\mu,\kappa}u\left( x+R\vartheta\right)\notag\\
	&&\cdot\left[ \nabla^{\alpha,z}p^{R}\left( t,\vartheta;-R^{-1}Gy\right)-\nabla^{\alpha,z'}p^{R}\left( t,\vartheta;-R^{-1}G'y\right)\right] d\vartheta dt\notag\\
	&:=& D\left( \kappa, R\right)
	\end{eqnarray*}
	with $\kappa$ and $R$ to be determined according to our needs. If $\alpha\in\left( 0,1\right)$, by \textbf{$\mathbf{\tilde{A}}(w,l,\gamma)$}, we may split the integral as follows.
	\begin{eqnarray*}
		&&\left\vert L_{z} u\left( x\right)-L_{z'} u\left( x\right)\right\vert \\
		&\leq& \int_{\bar{G}\left\vert y\right\vert\leq 1}   \left\vert D\left( \kappa, \bar{G}\left\vert y\right\vert\right) \right\vert\nu\left(dy\right)+\int_{\bar{G}\left\vert y\right\vert>1}   \left\vert u\left( x+Gy\right)-u\left( x+G'y\right) \right\vert\nu\left(dy\right)\\
		&:=& I_1+I_2
	\end{eqnarray*}
for some $\kappa\in\left( 1,1+\delta\right)$. By Lemmas \ref{ess} and \ref{lemma3} in Appendix,
\begin{eqnarray*}
	I_1&\leq& C\left\vert L^{\mu,\kappa}u\right\vert_0\int_{\bar{G}\left\vert y\right\vert\leq 1}\int_{0}^{\infty }t^{\kappa-1}\left( 1\wedge \gamma \left( t\right)^{-1}\bar{G}^{-1}\left\vert y\right\vert^{-1}\left\vert Gy-G'y\right\vert\right)dt \\
	 &&\quad w\left( \bar{G}\left\vert y\right\vert\right)^{\kappa}\nu\left(dy\right)\\
	&\leq& Cw\left( \bar{G}^{-1}\right)^{-1}\left\vert L^{\mu,\kappa}u\right\vert_0\int_{\left\vert y\right\vert\leq 1}   w\left( \left\vert y\right\vert\right)^{\kappa}\tilde{\nu}_{\bar{G}^{-1}}\left(dy\right)\\
	 &\leq& Cw\left( \bar{G}^{-1}\right)^{-1}\left\vert L^{\mu,\kappa}u\right\vert_0. 
 \end{eqnarray*}
Besides,
\begin{eqnarray*}
	I_2 &\leq&  Cw\left( \bar{G}^{-1}\right)^{-1}\int_{\left\vert y\right\vert> 1}\left\vert u\right\vert_0\tilde{\nu}_{\bar{G}^{-1}}\left(dy\right) \leq Cw\left( \bar{G}^{-1}\right)^{-1}\left\vert  u\right\vert_{0}. 
\end{eqnarray*}
Therefore when $\alpha\in\left( 0,1\right)$, there exists $\beta'\in\left( 0,\beta\right)$ such that
\begin{eqnarray}\label{s1}
\left\vert L_{z} u-L_{z'} u\right\vert_0 
\leq Cw\left( \bar{G}^{-1}\right)^{-1}\left\vert  u\right\vert_{1+\beta',\infty}.
\end{eqnarray}

If $\alpha=1$, we write instead
\begin{eqnarray*}
&&\left\vert L_{z} u\left( x\right)-L_{z'} u\left( x\right)\right\vert\\
&\leq& \int_{\left\vert y\right\vert\leq 1}   \left\vert D\left( 1+\delta, \left\vert y\right\vert\right) \right\vert\nu\left(dy\right)+\int_{\bar{G}\left\vert y\right\vert\leq 1,\left\vert y\right\vert>1}   \left\vert u\left( x+Gy\right)-u\left( x+G'y\right) \right\vert\nu\left(dy\right)\\
&+&\int_{\bar{G}\left\vert y\right\vert> 1,\left\vert y\right\vert>1}   \left\vert u\left( x+Gy\right)-u\left( x+G'y\right) \right\vert\nu\left(dy\right):=I_3+I_4+I_5,
\end{eqnarray*}
where
\begin{eqnarray*}
	I_3&\leq& C\left\vert L^{\mu,1+\delta}u\right\vert_0\int_{\left\vert y\right\vert\leq 1}w\left( \left\vert y\right\vert\right)^{1+\delta}\int_{0}^{\infty }t^{\kappa-1}\bar{G}\left( \gamma \left( t\right)^{-1}\wedge \gamma \left( t\right)^{-2}\right)dt\nu\left(dy\right) \\
	&\leq& C\bar{G}\left\vert L^{\mu,1+\delta}u\right\vert_0.
\end{eqnarray*}
Meanwhile, similarly as $I_1,I_2$, we have
\begin{eqnarray*}
	&&w\left( \bar{G}^{-1}\right)\left(I_4+I_5\right)\\
	&\leq&   C\left(\left\vert L^{\mu,1-\delta'}u\right\vert_0\int_{\left\vert y\right\vert\leq 1,\left\vert y\right\vert>\bar{G}}   w\left( \left\vert y\right\vert\right)^{1-\delta'}\tilde{\nu}_{\bar{G}^{-1}}\left(dy\right) +\int_{\left\vert y\right\vert> 1}\left\vert u\right\vert_0\tilde{\nu}_{\bar{G}^{-1}}\left(dy\right) \right)\\
	&\leq&   C\left(\frac{1}{\delta'}\left\vert L^{\mu,1-\delta'}u\right\vert_0+\frac{1}{\delta'}\left\vert L^{\mu,1-\delta'}u\right\vert_0w\left( \bar{G}\right)^{-\delta'}+\left\vert u\right\vert_0 \right).
\end{eqnarray*}
Thus for $\alpha=1$, there is $\beta'\in\left( 0,\beta\right)$ so that
\begin{eqnarray}\label{s2}
\left\vert L_{z} u-L_{z'} u\right\vert_0 
\leq \frac{C}{\delta'}\left(\bar{G}\vee w\left( \bar{G}^{-1}\right)^{-1}w\left( \bar{G}\right)^{-\delta'}\right)\left\vert  u\right\vert_{1+\beta',\infty}.
\end{eqnarray}

Next, we discuss the case $\alpha\in\left( 1,2\right)$. Split the integral as
\begin{eqnarray*}
	&&\left\vert L_{z} u\left( x\right)-L_{z'} u\left( x\right)\right\vert\\
	&\leq& \int_{\left\vert y\right\vert\leq 1}   \left\vert D\left( \kappa, \left\vert y\right\vert\right) \right\vert\nu\left(dy\right)+\int_{\left\vert y\right\vert> 1}   \left\vert \nabla^{\alpha,z}u\left( x;y\right)-\nabla^{\alpha,z'}u\left( x;y\right) \right\vert\nu\left(dy\right)\\
	&:=& I_6+I_7.
\end{eqnarray*}
Then as how we estimated $I_3$, we have $I_6\leq C\bar{G}\left\vert L^{\mu,\kappa}u\right\vert_0$ for some $\kappa\in\left( 1,1+\delta\right)$.
Clearly, $I_7\leq C\bar{G}\left\vert \nabla u\right\vert_0$. Thus for $\alpha\in\left( 1,2\right)$, there is $\beta'\in\left( 0,\beta\right)$ so that
\begin{eqnarray}\label{s3}
\left\vert L_{z} u-L_{z'} u\right\vert_0 
\leq C\bar{G}\left\vert  u\right\vert_{1+\beta',\infty}.
\end{eqnarray}

We now estimate the difference. Without loss of generality, we set $\left\vert x_1-x_2\right\vert=a\in\left( 0,1\right)$. By Lemma \ref{diff},
\begin{eqnarray*}
&& \nabla^{\alpha,z}u\left( x_1;y\right)-\nabla^{\alpha,z'}u\left( x_1;y\right)-\nabla^{\alpha,z}u\left( x_2;y\right)+\nabla^{\alpha,z'}u\left( x_2;y\right)\notag\\
&=& C w\left( R\right)^{\kappa}\int_0^{\infty}t^{\kappa-1} \int\left[ L^{\mu,\kappa}u\left( x_1+R\vartheta\right)-L^{\mu,\kappa}u\left( x_2+R\vartheta\right)\right]\notag\\
&&\cdot\left[ \nabla^{\alpha,z}p^{R}\left( t,\vartheta;-R^{-1}y\right)-\nabla^{\alpha,z'}p^{R}\left( t,\vartheta;-R^{-1}y\right)\right] d\vartheta dt\notag\\
&:=& \widetilde{D}\left( \kappa, R\right)
\end{eqnarray*}
with $\kappa$ and $R$ to be determined. Then, 
\begin{eqnarray*}
	&&\left\vert L_{z} u\left(x_1\right)-L_{z'} u\left(x_1\right)-L_{z} u\left(x_2\right)+L_{z'} u\left(x_2\right)\right\vert\\
	&\leq& \int_{\left\vert y\right\vert\leq a}   \left\vert \widetilde{D}\left( \kappa,\left\vert y\right\vert\right)\right\vert \nu\left(dy\right)+ \int_{\left\vert y\right\vert> a}  \left\vert \widetilde{D}\left( \kappa',\left\vert y\right\vert\right)\right\vert \nu\left(dy\right)\\
	&:=& I_8+I_9.
\end{eqnarray*}
Denote $\varsigma\left( r\right)=\nu\left( \left\vert y\right\vert >r\right)$. By Lemma \ref{lemma3} and \cite[Lemma 1]{mf}, for all $\alpha\in\left( 0,2\right)$, there is $\beta'\in\left(0,\beta\right)$ and $\sigma\in\left(0,1\right)$ such that
\begin{eqnarray*}
	I_8 &\leq& -C\left[L^{\mu,1+\beta'}u\right]_{\beta-\beta'}\bar{G}^{\sigma}w\left( a\right)^{\beta-\beta'}\int_{0}^a \varsigma\left( r\right)^{-1-\beta'} d\varsigma\left( r\right)\\
	&\leq& C\left\vert u\right\vert_{1+\beta,\infty}\bar{G}^{\sigma}w\left( a\right)^{\beta-\beta'} \varsigma\left( r\right)^{-\beta'}|_{0}^{a}\leq C\left\vert u\right\vert_{1+\beta,\infty}\bar{G}^{\sigma} w\left( a\right)^{\beta}.
\end{eqnarray*}
Recall \textbf{$\mathbf{\tilde{A}}(w,l,\gamma)$}. Using the symmetry assumption for $\alpha=1$ and non-degeneracy of $G$, we can set $\kappa=\beta+\min\left( \delta,\varepsilon\right)/2$ if $\alpha\neq 1$ and $\kappa=\beta+\left( \delta'+\varepsilon\right)/2$ if $\alpha=1$, there is $\beta'\in\left(0,\beta\right)$ and $\sigma\in\left(0,1\right)$ such that
\begin{eqnarray*}
	I_9	&\leq& C\left[ L^{\mu,1+\beta-\kappa}u\right]_{\kappa}\bar{G}^{\sigma}w\left( a\right)^{\kappa}\int_{\left\vert y\right\vert> a}w\left( \left\vert y\right\vert\right)^{1+\beta-\kappa}\nu\left( dy\right)\\
	&\leq&- C\left\vert u\right\vert_{1+\beta,\infty}\bar{G}^{\sigma}w\left( a\right)^{\kappa}\int_{ a}^{\infty}\varsigma\left( r\right)^{-1-\beta+\kappa}d\varsigma\left(r\right)\\
	&\leq& C \left\vert u\right\vert_{1+\beta,\infty}\bar{G}^{\sigma}w\left( a\right)^{\kappa}\varsigma\left(a\right)^{\kappa-\beta}\leq  C \left\vert u\right\vert_{1+\beta,\infty}\bar{G}^{\sigma}w\left( a\right)^{\beta}.
\end{eqnarray*}

This ends the proof.
\end{proof}

\begin{corollary}\label{col}
	Let $\beta\in\left( 0,1/\alpha\right)$, $\nu$ be a L\'{e}vy measure satisfying \textbf{$\mathbf{\tilde{A}}(w,l,\gamma)$}, and $G\left( z\right),\forall z\in\mathbf{R}^d$ satisfy \textbf{G($c_0, K,\beta$)}. If $u\in\tilde{C}^{1+\beta}\left( \mathbf{R}^d\right)$, then,
	\begin{eqnarray*}
		\left\vert \mathcal{G} u-L_{z} u\right\vert_{\beta} &\leq& C\left(\left(\bar{G}_{x,z}\right)^{\sigma}\left\vert u\right\vert_{1+\beta,\infty}+ \left\vert u\right\vert_{1+\beta',\infty} \right)
	\end{eqnarray*}
	for some $\beta'\in\left( 0,\beta\right)$. $C$ is independent of $x,z$ and $u$.
\end{corollary}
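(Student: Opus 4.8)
The plan is to decompose the difference $\mathcal{G}u - L_z u$ as an integral against $\nu$ of the pointwise difference $\nabla^{\alpha,x}u(x;y) - \nabla^{\alpha,z}u(x;y)$, using the definition \eqref{opp1} of $L_z$ together with the fact that $\mathcal{G}u(x)$ is exactly $L_z u(x)$ when $z=x$. Concretely, for fixed $x$ we write
\begin{eqnarray*}
\mathcal{G}u(x) - L_z u(x) = \int \left[ \nabla^{\alpha,x}u(x;y) - \nabla^{\alpha,z}u(x;y)\right]\nu(dy).
\end{eqnarray*}
The $\left\vert\cdot\right\vert_0$-bound then follows immediately from Lemma \ref{prr1} applied with $z'=x$: we get $\left\vert \mathcal{G}u - L_z u\right\vert_0 \le C\, g(x,z)\left\vert u\right\vert_{1+\beta',\infty}$, and since $g(x,z) \le K w(\left\vert x-z\right\vert)^{\beta} \le C$ is bounded under \textbf{G($c_0,K,\beta$)}(iii), this is controlled by $C\left\vert u\right\vert_{1+\beta',\infty}$.

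For the seminorm $\left[\mathcal{G}u - L_z u\right]_{\beta}$, the standard device is to split, for $h\neq 0$,
\begin{eqnarray*}
&&\left\vert (\mathcal{G}u - L_z u)(x+h) - (\mathcal{G}u - L_z u)(x)\right\vert \\
&\le& \left\vert L_{x+h}u(x+h) - L_x u(x+h) - (L_z u(x+h) - L_z u(x)) - (L_x u(x+h) - L_x u(x))\right\vert \\
&\le& \left\vert \mathcal{G}u(x+h) - L_{x+h}u(x+h)\right\vert \text{ (this term is absorbed) } + \cdots,
\end{eqnarray*}
so more cleanly: write $\mathcal{G}u(x+h) - L_z u(x+h) = (L_{x+h}u - L_z u)(x+h)$ and $\mathcal{G}u(x) - L_z u(x) = (L_x u - L_z u)(x)$, then insert and subtract $(L_x u - L_z u)(x+h)$. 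This yields
\begin{eqnarray*}
&&(\mathcal{G}u - L_z u)(x+h) - (\mathcal{G}u - L_z u)(x) \\
&=& \underbrace{(L_{x+h}u - L_x u)(x+h)}_{\text{(I)}} + \underbrace{(L_x u - L_z u)(x+h) - (L_x u - L_z u)(x)}_{\text{(II)}}.
\end{eqnarray*}
Term (II) is bounded by $\left[L_x u - L_z u\right]_{\beta} w(\left\vert h\right\vert)^{\beta} \le C (\bar{G}_{x,z})^{\sigma}\left\vert u\right\vert_{1+\beta,\infty} w(\left\vert h\right\vert)^{\beta}$ directly from the second estimate of Lemma \ref{prr1}. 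Term (I) is bounded by $\left\vert L_{x+h}u - L_x u\right\vert_0 \le C\, g(x+h,x)\left\vert u\right\vert_{1+\beta',\infty}$ using the first estimate of Lemma \ref{prr1} with the pair $(x+h,x)$; by \textbf{G($c_0,K,\beta$)}(iii) we have $g(x+h,x) \le K w(\left\vert h\right\vert)^{\beta}$, so term (I) is $\le C\left\vert u\right\vert_{1+\beta',\infty} w(\left\vert h\right\vert)^{\beta}$. Dividing by $w(\left\vert h\right\vert)^{\beta}$ and taking the supremum over $x,h$ gives $\left[\mathcal{G}u - L_z u\right]_{\beta} \le C\big((\bar{G}_{x,z})^{\sigma}\left\vert u\right\vert_{1+\beta,\infty} + \left\vert u\right\vert_{1+\beta',\infty}\big)$, which combined with the $\left\vert\cdot\right\vert_0$-estimate is the claim.

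The only subtle point is that the bound in term (I) must genuinely produce a factor $w(\left\vert h\right\vert)^{\beta}$ with no residual $h$-dependence in the constant, and that the exponent $\sigma$ appearing in term (II) is the same $\sigma\in(0,1)$ furnished by Lemma \ref{prr1} — so I would not expect any real obstacle here, as everything is a bookkeeping assembly of Lemma \ref{prr1} and assumption \textbf{G($c_0,K,\beta$)}(iii). If anything, the one place to be careful is that the definitions \eqref{low}/\eqref{opp1} of $\mathcal{G}$ and $L_z$ use the \emph{same} truncation function $\chi_{\alpha}$ and the \emph{same} cut-off convention, so that the identity $\mathcal{G}u(x)=L_x u(x)$ holds on the nose; this is immediate from comparing the definitions, but it is the hinge of the whole argument and worth stating explicitly.
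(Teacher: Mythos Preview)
Your proof is correct and follows essentially the same route as the paper's: the same identification $\mathcal{G}u(x)=L_xu(x)$, the same two-term splitting of the increment into $(L_{x+h}u-L_xu)(x+h)$ and $(L_xu-L_zu)(x+h)-(L_xu-L_zu)(x)$, and the same appeal to the two estimates of Lemma~\ref{prr1} together with \textbf{G($c_0,K,\beta$)}(iii) for the first piece. One small slip: in your $\left\vert\cdot\right\vert_0$ bound you write $g(x,z)\le Kw(\left\vert x-z\right\vert)^{\beta}\le C$, but $w(\left\vert x-z\right\vert)^{\beta}$ is not globally bounded in $\left\vert x-z\right\vert$; the paper instead bounds $\left\vert\mathcal{G}u-L_zu\right\vert_0\le\sup_{z,z'}\left\vert L_{z'}u-L_zu\right\vert_0\le C\left\vert u\right\vert_{1+\beta',\infty}$, which follows either from uniform boundedness of $g$ (via $\bar G_{z,z'}\le 2K$ and Lemma~\ref{ess}(a)) or, more simply, from the trivial estimate $\left\vert L_zu\right\vert_0+\left\vert L_{z'}u\right\vert_0$ with Proposition~\ref{ppr1}.
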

\begin{proof}
First by Lemma \ref{prr1}, $\left\vert \mathcal{G} u-L_{z} u\right\vert_{0}\leq \sup_{z,z'}\left\vert L_{z'} u-L_{z} u\right\vert_{0}\leq C\left\vert  u\right\vert_{1+\beta',\infty}$ for some $\beta'\in\left( 0,\beta\right)$. In the meantime,
\begin{eqnarray*}
	&&\left\vert L_{x+y} u\left( x+y\right)-L_{z} u\left( x+y\right)-L_{x} u\left( x\right)+L_{z} u\left( x\right)\right\vert\\
	&\leq&\left\vert L_{x} u\left( x+y\right)-L_{z} u\left( x+y\right)-L_{x} u\left( x\right)+L_{z} u\left( x\right)\right\vert\\
	&& +\left\vert L_{x+y} u\left( x+y\right)-L_{x} u\left( x+y\right)\right\vert\\
	&\leq& C \left(\bar{G}_{x,z}\right)^{\sigma}\left\vert u\right\vert_{1+\beta,\infty} w\left( \left\vert y\right\vert\right)^{\beta}+ C \left\vert u\right\vert_{1+\beta',\infty} w\left( \left\vert y\right\vert\right)^{\beta}.
\end{eqnarray*} 
Namely, $\left[\mathcal{G} u-L_{z} u\right]_{\beta}\leq C\left(\left(\bar{G}_{x,z}\right)^{\sigma}\left\vert u\right\vert_{1+\beta,\infty}+ \left\vert u\right\vert_{1+\beta',\infty} \right)$. 
\end{proof}

For $\eta_{m,z}$ introduced in previous section, we denote
\begin{eqnarray}\label{opp2}
	&&\langle u,\eta_{m,z}\rangle_{z}\\
	&=&\int\left[ u\left(  x+G\left( z\right)y\right)-u\left(  x\right)\right]\left[ \eta_{m,z}\left(  x+G\left( z\right)y\right)-\eta_{m,z}\left(  x\right)\right]\nu\left( dy\right).\notag
\end{eqnarray}

\begin{lemma}\label{aa2}
	Let $\nu$ be a L\'{e}vy measure satisfying \textbf{$\mathbf{\tilde{A}}(w,l,\gamma)$} and $\left\Vert G\left( z\right)\right\Vert\leq K, \forall z\in\mathbf{R}^d$ for some $K>0$. $\beta\in\left(0,1/\alpha\right)$. Then for any $u\in \tilde{C}_{\infty ,\infty }^{1+\beta }\left(\mathbf{R}^d\right)$ and any $\varepsilon\in\left( 0,1\right)$,
	\begin{eqnarray}\label{cros}
	\sup_{z}\left\vert \langle u,\eta_{m,z}\rangle_{z}\right\vert_{\beta,\infty}\leq C l\left(m\right)^{1+\beta}\left( \varepsilon \left\vert u\right\vert_{1+\beta,\infty}+C_{\varepsilon}\left\vert u\right\vert_{0}\right),
	\end{eqnarray}
	where $C_{\varepsilon}$ depends on $\varepsilon$ but is independent of $u$.
\end{lemma}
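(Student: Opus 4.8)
The argument runs parallel to the proof of Lemma~\ref{aa}, with the increment direction $y$ there replaced everywhere by $G(z)y$; the role played in Lemma~\ref{aa} by the uniform bound on the kernel $\rho$ is now played by $\left\Vert G\right\Vert\le K$, which, via the scaling inequality $\eqref{scale}$, yields the two uniform-in-$z$ estimates $\left\vert G(z)y\right\vert\le K\left\vert y\right\vert$ and $w\left(\left\vert G(z)y\right\vert\right)\le l(K)\,w\left(\left\vert y\right\vert\right)\le C\,w\left(\left\vert y\right\vert\right)$ (the latter using only that $l$ is nondecreasing, not monotonicity of $w$). Only $\left\Vert G\right\Vert\le K$ enters; invertibility of $G$ is irrelevant here. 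As in Lemma~\ref{aa}, fix $\kappa\in(1/2,1)$ with $\int_1^{\infty}t^{\kappa-1}\gamma(t)^{-1}dt<\infty$ — possible under \textbf{$\mathbf{\tilde{A}}(w,l,\gamma)$} — so that $\eqref{mod1}$ (Lemma~\ref{dif}) and its $x$-difference refinement coming from $\eqref{tran21}$ are available, and recall the identity $L^{\mu,\kappa}\eta_{m,z}(x)=w(m^{-1})^{-\kappa}L^{\tilde{\mu}_{m^{-1}},\kappa}\eta(m(x-z))$, in which $L^{\tilde{\mu}_{m^{-1}},\kappa}\eta$ is bounded in $C_b^{\infty}\left(\mathbf{R}^d\right)$ and in $\tilde{C}^{\beta}_{\infty,\infty}\left(\mathbf{R}^d\right)$ uniformly in $m$, since $\tilde{\mu}_R$ obeys \textbf{A(w,l)} with constants uniform in $R$ (combine Lemma~\ref{rep}, Lemma~\ref{cont}, \cite[Proposition 4]{mf} and \textbf{$\mathbf{\tilde{A}}(w,l,\gamma)$}(iii)).

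For $\left\vert\langle u,\eta_{m,z}\rangle_z\right\vert_0$ I would split $\int=\int_{\left\vert y\right\vert\le 1}+\int_{\left\vert y\right\vert>1}$. On $\left\vert y\right\vert\le 1$, estimate each factor by $\eqref{mod1}$ with direction $G(z)y$, namely $\left\vert u(x+G(z)y)-u(x)\right\vert\le C\,w\left(\left\vert y\right\vert\right)^{\kappa}\left\vert L^{\mu,\kappa}u\right\vert_0$ and $\left\vert\eta_{m,z}(x+G(z)y)-\eta_{m,z}(x)\right\vert\le C\,w\left(\left\vert y\right\vert\right)^{\kappa}w(m^{-1})^{-\kappa}\left\vert L^{\tilde{\mu}_{m^{-1}},\kappa}\eta\right\vert_0$, and integrate using $\int_{\left\vert y\right\vert\le 1}w\left(\left\vert y\right\vert\right)^{2\kappa}\nu(dy)<\infty$ (Lemma~\ref{ess}, \cite[Proposition 4]{mf}); on $\left\vert y\right\vert>1$ use the crude bound $2\left\vert u\right\vert_0\cdot 2$ together with $\nu(\left\vert y\right\vert>1)<\infty$. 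Since $w(1)=1$, $\eqref{scale}$ gives $w(m^{-1})^{-1}\le l(m)$, so $\left\vert\langle u,\eta_{m,z}\rangle_z\right\vert_0\le C\,l(m)^{\kappa}\left(\left\vert L^{\mu,\kappa}u\right\vert_0+\left\vert u\right\vert_0\right)$; the interpolation inequality of \cite[Proposition 4]{mf} (applicable as $\kappa+\beta<1+\beta$) and $l(m)^{\kappa}\le C\,l(m)^{1+\beta}$ (valid since $m\ge 1$ and $l$ is nondecreasing) then give the $\left\vert\cdot\right\vert_0$ part of $\eqref{cros}$.

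For the $x$-difference with $a=\left\vert x_1-x_2\right\vert\in(0,1)$, expand the integrand of $\langle u,\eta_{m,z}\rangle_z(x_1)-\langle u,\eta_{m,z}\rangle_z(x_2)$ by the product rule into one piece carrying the mixed second difference of $u$ times an increment of $\eta_{m,z}$ and one carrying an increment of $u$ times the mixed second difference of $\eta_{m,z}$, then split $\left\vert y\right\vert\le 1$ versus $\left\vert y\right\vert>1$; this produces exactly the terms $I_1,I_2,I_3=I_{31}+I_{32}$ of Lemma~\ref{aa}. Term $I_1$ is handled by the $x$-difference form of $\eqref{tran21}$ at $R=\left\vert y\right\vert$ — controlling $\int\lvert p^{\left\vert y\right\vert}(t,\vartheta-\left\vert y\right\vert^{-1}y)-p^{\left\vert y\right\vert}(t,\vartheta)\rvert\,d\vartheta\le 2\wedge C'\gamma(t)^{-1}$ by Lemma~\ref{lemma3} and integrating in $t$ via $\int_1^{\infty}t^{\kappa-1}\gamma(t)^{-1}dt<\infty$ — yielding $\lvert[u(x_1+G(z)y)-u(x_1)]-[u(x_2+G(z)y)-u(x_2)]\rvert\le C\,w\left(\left\vert y\right\vert\right)^{\kappa}w(a)^{\beta}\left\vert L^{\mu,\kappa}u\right\vert_{\beta,\infty}$; terms $I_2$ and $I_{31}$ use that $[L^{\tilde{\mu}_{m^{-1}},\kappa}\eta]_{\beta}$ and $[\eta]_{\beta}$ are bounded uniformly in $m$, so $\sup_z\lvert L^{\tilde{\mu}_{m^{-1}},\kappa}\eta(m(x_1-z))-L^{\tilde{\mu}_{m^{-1}},\kappa}\eta(m(x_2-z))\rvert\le C\,w(ma)^{\beta}\le C\,l(m)^{\beta}w(a)^{\beta}$ and likewise for $\eta_{m,z}$; and $I_{32}$ uses $\lvert[u(x_1+G(z)y)-u(x_1)]-[u(x_2+G(z)y)-u(x_2)]\rvert\le 2[u]_{\beta}w(a)^{\beta}$. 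In every term the factor $w\left(\left\vert G(z)y\right\vert\right)$ is dominated by $C\,w\left(\left\vert y\right\vert\right)$ and absorbed, and $\left\vert L^{\mu,\kappa}u\right\vert_{\beta,\infty}\le C\left\vert u\right\vert_{\kappa+\beta,\infty}$ (Lemma~\ref{cont}) is converted to $\varepsilon\left\vert u\right\vert_{1+\beta,\infty}+C_\varepsilon\left\vert u\right\vert_0$ by interpolation (Proposition~\ref{thm4}, \cite[Proposition 4]{mf}); collecting the pieces gives $[\langle u,\eta_{m,z}\rangle_z]_{\beta}\le C\,l(m)^{1+\beta}\left(\varepsilon\left\vert u\right\vert_{1+\beta,\infty}+C_\varepsilon\left\vert u\right\vert_0\right)$, and $\eqref{cros}$ follows via $\left\vert\cdot\right\vert_{\beta,\infty}\le C\left\vert\cdot\right\vert_{\beta}$ (\cite[Proposition 1]{mf}). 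The computations are routine transcriptions of Lemma~\ref{aa}; the only point that really needs care is keeping all constants uniform in $m$ and $z$ — above all the uniform-in-$m$ control of $L^{\tilde{\mu}_{m^{-1}},\kappa}\eta$ — while checking that substituting $y\mapsto G(z)y$ introduces only harmless powers of $K=\left\Vert G\right\Vert$.
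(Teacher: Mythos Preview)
Your proposal is correct and follows essentially the same route as the paper: the paper's proof opens with ``We proceed in the same manner as in Lemma~\ref{aa}'' and carries out exactly the transcription you describe, using $\eqref{mod1}$ on each factor for the sup-norm part, the product-rule decomposition into four pieces (labeled $I_1,I_2,I_3,I_4$ there, matching your $I_1,I_2,I_{31},I_{32}$) for the $\beta$-difference, and the same interpolation to finish. Your explicit remarks that $w(|G(z)y|)\le l(K)\,w(|y|)$ via $\eqref{scale}$ and that only $\|G\|\le K$ (not invertibility) is used are accurate and, if anything, make the dependence on $G$ slightly more transparent than the paper's terse ``since $\|G(z)\|$ is uniformly bounded''.
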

\begin{proof}
We proceed in the same manner as in Lemma \ref{aa}. First, since $\left\Vert G\left( z\right)\right\Vert$ is uniformly bounded, there is $\kappa\in\left(1/2,1\right)$ such that
\begin{eqnarray*}
	&&\left\vert\langle u,\eta_{m,z}\rangle_{z}\right\vert_0\\ &\leq& C\int\left\vert u\left(  x+ G\left( z\right)y\right)-u\left(  x\right)\right\vert\left\vert \eta_{m,z}\left(  x+G\left( z\right)y\right)-\eta_{m,z}\left(  x\right)\right\vert\nu\left( dy\right)\\
	&\leq& Cw\left(m^{-1}\right)^{-\kappa}\left\vert L^{\mu,\kappa}u\right\vert_0\left\vert L^{\tilde{\mu}_{m^{-1}},\kappa}\eta\right\vert_0\int_{\left\vert y\right\vert\leq 1} w\left(\left\vert  y\right\vert\right)^{2\kappa}\nu\left( dy\right)+C\left\vert u\right\vert_0\\
	&\leq& C l\left(m\right)^{\kappa}\left( \varepsilon \left\vert u\right\vert_{1+\beta,\infty}+C_{\varepsilon}\left\vert u\right\vert_{0}\right).
\end{eqnarray*}

For the difference, again, let us set $a=\left\vert x_1-x_2\right\vert\in \left( 0,1\right)$ and estimate
\begin{eqnarray*}
	&& \left\vert \langle u,\eta_{m,z}\rangle_{z}\left( x_1\right)-\langle u,\eta_{m,z}\rangle_{z}\left( x_2\right)\right\vert\\
	&\leq& \vert\int_{\left\vert y\right\vert\leq 1}\left[ u\left(  x_1+G\left( z\right)y\right)-u\left(  x_1\right)- u\left(  x_2+G\left( z\right)y\right)+u\left(  x_2\right)\right]\\
	&&\quad\quad\cdot\left[ \eta_{m,z}\left(  x_1+G\left( z\right)y\right)-\eta_{m,z}\left(  x_1\right)\right]\nu\left( dy\right)\vert\\
	&+& \vert\int_{\left\vert y\right\vert\leq 1}\left[ u\left(  x_2+G\left( z\right)y\right)-u\left(  x_2\right)\right]\lbrack \eta_{m,z}\left(  x_1+G\left( z\right)y\right)-\eta_{m,z}\left(  x_1\right)\\
	&&\quad\quad-\eta_{m,z}\left(  x_2+G\left( z\right)y\right)+\eta_{m,z}\left(  x_2\right)\rbrack\nu\left( dy\right)\vert\\
	&+& \vert\int_{\left\vert y\right\vert> 1}\left[ u\left(  x_2+G\left( z\right)y\right)-u\left(  x_2\right)\right]\lbrack \eta_{m,z}\left(  x_1+G\left( z\right)y\right)-\eta_{m,z}\left(  x_1\right)\\
	&&\quad\quad-\eta_{m,z}\left(  x_2+G\left( z\right)y\right)+\eta_{m,z}\left(  x_2\right)\rbrack\nu\left( dy\right)\vert\\
	&+& \vert\int_{\left\vert y\right\vert> 1}\left[ u\left(  x_1+G\left( z\right)y\right)-u\left(  x_1\right)- u\left(  x_2+G\left( z\right)y\right)+u\left(  x_2\right)\right]\\
	&&\quad\quad\cdot\left[ \eta_{m,z}\left(  x_1+G\left( z\right)y\right)-\eta_{m,z}\left(  x_1\right)\right]\nu\left( dy\right)\vert\\
	&:=& I_1+I_2+I_3+I_4.
\end{eqnarray*}
Then $\eqref{mod1}$ implies that
\begin{eqnarray*}
	I_1, I_2 &\leq& C l\left(m\right)^{1+\beta}w\left( a\right)^{\beta}\left( \varepsilon \left\vert u\right\vert_{1+\beta,\infty}+C_{\varepsilon}\left\vert u\right\vert_{0}\right),
\end{eqnarray*}
where $C$ depends on $K$. Meanwhile,
\begin{eqnarray*}
	I_{3} &\leq& C\left\vert u\right\vert_0\int_{\left\vert y\right\vert> 1}\vert\eta_{m,z}\left(  x_1+G\left( z\right)y\right)-\eta_{m,z}\left(  x_2+G\left( z\right)y\right)-\eta_{m,z}\left(  x_1\right)\\
	&&+\eta_{m,z}\left( x_2\right)\vert \nu\left( dy\right)
	\leq  C\left\vert u\right\vert_0l\left(m\right)^{\beta}w\left( a\right)^{\beta},
\end{eqnarray*}
and obviously,
\begin{eqnarray*}
	I_{4} &\leq&  Cw\left( a\right)^{\beta}\left\vert u\right\vert_{\beta,\infty}\leq  Cw\left( a\right)^{\beta}\left( \varepsilon \left\vert u\right\vert_{1+\beta,\infty}+C_{\varepsilon}\left\vert u\right\vert_{0}\right).
\end{eqnarray*}

Summarizing, 
\begin{eqnarray*}
	\sup_{z}\left[\langle u,\eta_{m,z}\rangle_{z}\right]_{\beta}\leq Cl\left(m\right)^{1+\beta}\left( \varepsilon \left\vert u\right\vert_{1+\beta,\infty}+C_{\varepsilon}\left\vert u\right\vert_{0}\right),
\end{eqnarray*}
and thus,
\begin{eqnarray*}
\sup_{z}\left\vert \langle u,\eta_{m,z}\rangle_{z}\right\vert_{\beta,\infty}
	&\leq&  C l\left(m\right)^{1+\beta}\left( \varepsilon \left\vert u\right\vert_{1+\beta,\infty}+C_{\varepsilon}\left\vert u\right\vert_{0}\right).
\end{eqnarray*}
\end{proof}

\subsection{Lower Order Operators}
For any function $u\in  C_b^{2}\left(H_T\right)$, denote
\begin{eqnarray*}
	&&Q_{t,z,z'}u\left( t,x\right)\\
	&:=& 1_{\alpha\in\left( 1,2\right)}b\left( t,z\right)\cdot\nabla u\left(t, x\right)+p\left( t,z\right)u\left( t,x\right) +\int_{ \mathbf{R}^d_0}\lbrack u\left( t,x+q\left( t,z',y\right)\right)\\
	&&-u\left( t,x\right)-\nabla u\left( t,x\right)\cdot q\left( t,z',y\right)1_{\alpha\in\left( 1,2\right)}1_{\left\vert y\right\vert\leq 1}\rbrack\varrho\left(t,z,y\right) \nu_2\left( dy\right)\\
	&:=& 1_{\alpha\in\left( 1,2\right)}b\left( t,z\right)\cdot\nabla u\left(t, x\right)+p\left( t,z\right)u\left( t,x\right) +\tilde{Q}_{t,z,z'}u\left( t,x\right).
\end{eqnarray*}

\begin{lemma}\label{bb}
	Let \textbf{B($K,\beta$)} hold. $\beta\in\left( 0,1/\alpha\right)$. Then for any $u\in \tilde{C}_{\infty ,\infty}^{1+\beta}\left(H_T\right)$ and any $\varepsilon\in\left( 0,1\right)$, there exists $\beta'\in\left(0,\beta\right)$,
	\begin{eqnarray}
	\sup_{t,z,z'}\left\vert \tilde{Q}_{t,z,z'}u\left( t,\cdot\right)\right\vert_{0}&\leq&  C\sup_{t,z,y}\left\vert \rho\left( t, z,y\right)\right\vert \left\vert u\right\vert_{1+\beta',\infty},\\
	\sup_{t,z,z'}\left[ \tilde{Q}_{t,z,z'}u\left( t,\cdot\right)\right]_{\beta}&\leq& \sup_{t,z,y}\left\vert \rho\left( t, z,y\right)\right\vert\left(\varepsilon \left\vert u\right\vert_{1+\beta,\infty}+C_{\varepsilon}\left\vert u\right\vert_{0}\right),
	\end{eqnarray}
	where $C,C_{\varepsilon}$ are independent of $u$.
\end{lemma}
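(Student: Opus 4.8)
The strategy is to treat $\tilde{Q}_{t,z,z'}$ as an integral operator whose increments can be controlled by the quantities $\nabla^{\alpha}u(\cdot;q(t,z',y))$ (when $\alpha\in(1,2)$) or simply $u(\cdot+q(t,z',y))-u(\cdot)$ (when $\alpha\in(0,1]$), so that the representation estimates \eqref{mod1} and \eqref{mod2} from Lemmas \ref{dif} and \ref{diff} apply with $y$ replaced by $q(t,z',y)$. First I would fix $\alpha$ and split $\tilde{Q}_{t,z,z'}u$ into the small-jump part over $\{|y|\le 1\}$ and the large-jump part over $\{|y|>1\}$. For the $L^1$ (zeroth order) bound I would, on $\{|y|\le1\}$, choose (as in Lemma \ref{cont2}) $\kappa$ slightly above $\beta$ — namely $1+\beta'$ with $\beta'<\delta$ if $\alpha\ne1$ and $\beta'=\delta$ if $\alpha=1$ in the case $\alpha\in(1,2)$, and a small $\kappa\in(0,1)$ in the case $\alpha\in(0,1]$ — and bound the integrand by $Cw(|q(t,z',y)|)^{\kappa}\,|L^{\mu,\kappa}u|_0$ or $Cw(|q|)^{1+\beta'}|L^{\mu,1+\beta'}u|_0$; the integrals $\int_{|y|\le1}w(|q(t,z',y)|)\,\nu_2(dy)$, $\int_{|y|\le1}w(|q|)^{1+\beta'}\nu_2(dy)$ are finite by \textbf{B($K,\beta$)}(iii)--(iv) together with $\tilde{\mathbf{A}}(w,l,\gamma)$(ii). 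On $\{|y|>1\}$ I would use \eqref{mod1} with a $\kappa\in(0,1)$ small enough that $w(r)^{\kappa}\le Cr$ for large $r$ (cf. \cite[Lemma 1]{mf}) to get a bound by $|u|_{1,\infty}$ plus $|u|_0$ using $\int_{|y|>1}(|q|\wedge1)\nu_2(dy)\le K$. Lemma \ref{cont} and \cite[Proposition 4]{mf} then absorb all these into $C\sup_{t,z,y}|\rho|\,|u|_{1+\beta',\infty}$ (the dependence on $\rho$ enters through the earlier convention bounding $\varrho$ in terms of $\rho$; more precisely the bound depends on $\sup|\varrho|\le K$ via \textbf{B($K,\beta$)}(i)).

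For the H\"older seminorm estimate I would set $a=|x_1-x_2|\in(0,1)$ and write
$\tilde{Q}_{t,z,z'}u(t,x_1)-\tilde{Q}_{t,z,z'}u(t,x_2)$
as an integral of the second difference in $x$ of the relevant increment, then split the domain at $|y|=a$ (measured after applying $q$, i.e.\ into $\{|q(t,z',y)|\le a\}$ and its complement, which is legitimate since $q(t,z',\cdot)\ne0$ off the origin by \textbf{B($K,\beta$)}(ii)). On the near part I would apply \eqref{ddd} with exponent $1+\beta'$ ($\beta'<\delta$, resp.\ $=\delta$), using that $L^{\mu,1+\beta'}u$ has finite $[\cdot]_{\beta-\beta'}$-seminorm, and the change-of-variable trick $\int_0^a \varsigma_2(r)^{-1-\beta'}\,d\varsigma_2(r)\le C\varsigma_2(a)^{-\beta'}$ where $\varsigma_2$ is built from the image measure of $\nu_2$ under $q$; this yields a bound $C|u|_{1+\beta,\infty}\,w(a)^{\beta}$. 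On the far part I would instead pick $\kappa=\beta+\min(\delta,\varepsilon)/2$ (resp.\ $\beta+(\delta'+\varepsilon)/2$ if $\alpha=1$) and estimate via \eqref{ddd} and \cite[Proposition 1]{mf} to get $C|u|_{1+\beta,\infty}\,w(a)^{\kappa}\varsigma_2(a)^{\kappa-\beta}\le C|u|_{1+\beta,\infty}\,w(a)^{\beta}$. The appearance of $\varepsilon\,|u|_{1+\beta,\infty}+C_{\varepsilon}|u|_0$ rather than a clean $|u|_{1+\beta,\infty}$ is obtained by an interpolation step: the "honest" order-$1+\beta$ contributions come only with a factor that can be made small — specifically, one uses \textbf{B($K,\beta$)}(ii), namely $\lim_{\varepsilon\to0}\sup_{t,z'}\int_{|q|\le\varepsilon}(w(|q|)+1_{\alpha=1}|q|)\nu_2(dy)=0$, to show that the piece of the integral over $\{|q(t,z',y)|\le\varepsilon_0\}$ carries an arbitrarily small constant times $|u|_{1+\beta,\infty}$, while the complementary piece $\{|q|>\varepsilon_0\}$ is an integral over a region of finite $\nu_2$-mass and hence is bounded by $C_{\varepsilon_0}|u|_0$ (differences of a bounded function). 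Combining, one gets $[\tilde{Q}_{t,z,z'}u(t,\cdot)]_\beta\le \sup_{t,z,y}|\rho|\,(\varepsilon|u|_{1+\beta,\infty}+C_\varepsilon|u|_0)$.

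The main obstacle I anticipate is the bookkeeping in the $\alpha=1$ case: there the drift-type correction $1_{\alpha\in(1,2)}$ is absent so the integrand is $u(t,x+q)-u(t,x)$ rather than $\nabla^{\alpha}u$, but the logarithmic-type borderline behaviour forces the use of the extra exponent $\delta'$ and the symmetry/non-degeneracy assumptions; one must verify carefully that the $\varepsilon$-smallness of $\int_{|q|\le\varepsilon}|q|\,\nu_2(dy)$ (from \textbf{B($K,\beta$)}(ii)) is exactly what is needed to close the interpolation when $w(r)^{\kappa}$ cannot be compared to a single power of $r$. A secondary technical point is justifying the change of variables from $\nu_2$-integrals over $y$ to Stieltjes integrals against $\varsigma_2(r):=\nu_2(\{y:|q(t,z',y)|>r\})$ uniformly in $t,z'$; this requires the monotonicity and the finiteness bounds in \textbf{B($K,\beta$)}(iii)--(v) and should be stated as a preliminary observation before the main split. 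Once these are in place the estimate is a direct, if lengthy, adaptation of the proof of Lemma \ref{cont2} and Lemma \ref{aa}.
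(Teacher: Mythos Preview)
Your plan for the sup-norm bound is essentially the paper's: split by $\alpha$-range and by $|y|\le1$ versus $|y|>1$, apply \eqref{mod1}/\eqref{mod2} with a suitable exponent, and absorb via Lemma \ref{cont} and interpolation. That part is fine.

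The H\"older seminorm argument, however, has a genuine gap. You propose to split at $\{|q(t,z',y)|\le a\}$ with $a=|x_1-x_2|$ and then run the Stieltjes computation from Lemma \ref{cont2}, using $\varsigma_2(r):=(\nu_2\circ q^{-1})(\{|\cdot|>r\})$. But the identities $\varsigma(a)^{-\beta'}\le Cw(a)^{\beta'}$ and $w(a)^{\kappa}\varsigma(a)^{\kappa-\beta}\le Cw(a)^{\beta}$ used in Lemma \ref{cont2} come from Lemma \ref{ess}(a), which is a consequence of \textbf{A(w,l)} for $\nu$. The lower-order measure $\nu_2$ is \emph{not} assumed to satisfy \textbf{A(w,l)}, and nothing in \textbf{B($K,\beta$)} gives a two-sided relation between $\varsigma_2$ and $w$. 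So both the near-part and the far-part computations you sketch simply do not close; the $a$-split is the wrong decomposition here.

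The paper's actual approach is to split at a \emph{fixed} threshold $\{|q|\le\epsilon\}$ independent of $a$. On $\{|q|\le\epsilon\}$ one takes the full exponent ($\kappa=1$, i.e.\ $[L^{\mu}u]_{\beta}$ or $[\nabla u]_{\beta'}$) to obtain
\[
w(a)^{\beta}\,[L^{\mu}u]_{\beta}\int_{|q|\le\epsilon} w(|q|)\,\nu_2(dy),
\]
and then \textbf{B($K,\beta$)}(ii) makes the integral small, yielding the $\varepsilon|u|_{1+\beta,\infty}$ term. On $\{|q|>\epsilon\}$ one lowers the exponent to some $\kappa\in(0,1)$ and bounds by
\[
w(a)^{\beta}\,[L^{\mu,\kappa}u]_{\beta}\int_{|q|>\epsilon} w(|q|)^{\kappa}\,\nu_2(dy)
\;\le\; C\,l(\epsilon^{-1})^{1-\kappa}\,w(a)^{\beta}\,|u|_{\kappa+\beta,\infty},
\]
after which $|u|_{\kappa+\beta,\infty}$ is interpolated between $|u|_{1+\beta,\infty}$ and $|u|_0$ via \cite[Proposition 4]{mf}. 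Your description of the $\{|q|>\epsilon_0\}$ piece as ``bounded by $C_{\epsilon_0}|u|_0$ (differences of a bounded function)'' is not enough: that bound carries no factor $w(a)^{\beta}$ and so cannot contribute to $[\cdot]_{\beta}$. The $w(a)^{\beta}$ must come from the H\"older continuity of $L^{\mu,\kappa}u$, not from the size of $u$.
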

\begin{proof}
	We split the integral.
	\begin{eqnarray*}
		&&\left\vert \tilde{Q}_{t,z,z'}u\left( t,x\right)\right\vert_{0}\\
		&\leq& C\sup_{t,z,y}\left\vert \rho\left( t, z,y\right)\right\vert\int_{ \left\vert y\right\vert\leq 1}1_{\alpha\in\left( 1,2\right)}\left\vert \nabla^{\alpha}u\left( t,x;q\left( t,z',y\right)\right)\right\vert \nu_2\left( dy\right)\\
		&+& C\sup_{t,z,y}\left\vert \rho\left( t, z,y\right)\right\vert\int_{ \mathbf{R}^d_0}1_{\alpha\in\left( 0,1\right]}\left\vert u\left( t,x+q\left( t,z',y\right)\right)-u\left( t,x\right)\right\vert \nu_2\left( dy\right)\\
		&+& C\sup_{t,z,y}\left\vert \rho\left( t, z,y\right)\right\vert\int_{ \left\vert y\right\vert> 1}1_{\alpha\in\left( 1,2\right)}\left\vert u\left( t,x+q\left( t,z',y\right)\right)-u\left( t,x\right)\right\vert \nu_2\left( dy\right)\\
		&:=& C\sup_{t,z,y}\left\vert \rho\left( t, z,y\right)\right\vert\left(I_1+I_2+I_3\right).
	\end{eqnarray*}
	
	Use assumptions \textbf{$\mathbf{\tilde{A}}(w,l,\gamma)$} and \textbf{B($K,\beta$)}. By Lemma \ref{diff},
	\begin{eqnarray*}
		I_1 &\leq& C\left\vert L^{\mu}u\right\vert_0\int_{ \left\vert y\right\vert\leq 1}w\left( \left\vert q\left( t,z',y\right)\right\vert\right)\nu_2\left( dy\right)\leq C\left\vert L^{\mu}u\right\vert_0.
	\end{eqnarray*}
	Take $\beta'\in\left( 0,\delta\right)$. Then by Lemma \ref{dif},
	\begin{eqnarray*}
		I_2 &\leq& C1_{\alpha\in\left( 0,1\right)}\int_{ \mathbf{R}^d_0}\left( \left\vert L^{\mu,1+\beta'}u\right\vert_0 w\left( \left\vert q\left( t,z',y\right)\right\vert\right)^{1+\beta'}\wedge \left\vert u\right\vert_0\right)\nu_2\left( dy\right)\\
		&&+ C1_{\alpha=1}\int_{ \mathbf{R}^d_0}\left( \left\vert \nabla u\right\vert_0  \left\vert q\left( t,z',y\right)\right\vert\wedge \left\vert u\right\vert_0\right)\nu_2\left( dy\right)\\
		&\leq& C\left(1_{\alpha\in\left( 0,1\right)}\left\vert L^{\mu,1+\beta'}u\right\vert_0+1_{\alpha=1}\left\vert \nabla u\right\vert_0+\left\vert u\right\vert_0\right).
	\end{eqnarray*}
	Clearly, $I_3 \leq C\left(\left\vert \nabla u\right\vert_0+\left\vert u\right\vert_0\right)$. Summarizing, there exists $\beta'\in\left( 0,\beta\right)$ so that
	\begin{eqnarray*}
		\sup_{t,z,z'}\left\vert \tilde{Q}_{t,z,z'}u\left( t,x\right)\right\vert_{0}\leq  C \sup_{t,z,y}\left\vert \rho\left( t, z,y\right)\right\vert\left\vert u\right\vert_{1+\beta',\infty}.
	\end{eqnarray*}
	
	Meanwhile,
	\begin{eqnarray*}
		&&\sup_{t,z,y}\left\vert \rho\left( t, z,y\right)\right\vert^{-1}\left\vert \tilde{Q}_{t,z,z'}u\left( t,x_1\right)-\tilde{Q}_{t,z,z'}u\left( t,x_2\right)\right\vert\\
		&\leq& C\int_{ \left\vert y\right\vert\leq 1}1_{\alpha\in\left( 1,2\right)}\left\vert \nabla^{\alpha}u\left( t,x_1;q\left( t,z',y\right)\right)-\nabla^{\alpha}u\left( t,x_2;q\left( t,z',y\right)\right)\right\vert \nu_2\left( dy\right)\\
		&+& C\int_{\left\vert y\right\vert> 1}1_{\alpha\in\left( 1,2\right)}\vert u\left( t,x_1+q\left( t,z',y\right)\right)-u\left( t,x_1\right)-u\left( t,x_2+q\left( t,z',y\right)\right)\\
		&&+u\left( t,x_2\right)\vert \nu_2\left( dy\right)\\
		&+& C\int_{  \mathbf{R}^d_0}1_{\alpha\in\left( 0,1\right]}\vert u\left( t,x_1+q\left( t,z',y\right)\right)-u\left( t,x_1\right)-u\left( t,x_2+q\left( t,z',y\right)\right)\\
		&&+u\left( t,x_2\right)\vert \nu_2\left( dy\right)\\
		&:=&C\left(I_4+I_5+I_6\right).
	\end{eqnarray*}
	
	Set $\left\vert x_1-x_2\right\vert=a$. Then for any $\epsilon\in\left( 0,1\right)$,
	\begin{eqnarray*}
		I_4&=&\int_{ \left\vert q\left( t,z',y\right)\right\vert\leq \epsilon,\left\vert y\right\vert\leq 1}\left\vert \nabla^{\alpha}u\left( t,x_1;q\left( t,z',y\right)\right)-\nabla^{\alpha}u\left( t,x_2;q\left( t,z',y\right)\right)\right\vert \nu_2\left( dy\right)\\
		&+& \int_{ \left\vert q\left( t,z',y\right)\right\vert> \epsilon,\left\vert y\right\vert\leq 1}\left\vert \nabla^{\alpha}u\left( t,x_1;q\left( t,z',y\right)\right)-\nabla^{\alpha}u\left( t,x_2;q\left( t,z',y\right)\right)\right\vert \nu_2\left( dy\right)\\
		&:=&I_{41}+I_{42}.
	\end{eqnarray*}
	By \textbf{B($K,\beta$)}, for any $\varepsilon\in\left( 0,1\right)$, there exists $\epsilon\in\left( 0,1\right)$ such that
	\begin{eqnarray*}
		I_{41} &=&w\left(a\right)^{\beta}\left[L^{\mu}u\left( t,\cdot\right)\right]_{\beta}\int_{ \left\vert q\left( t,z',y\right)\right\vert\leq \epsilon,\left\vert y\right\vert\leq 1}w\left(\left\vert q\left( t,z',y\right)\right\vert\right) \nu_2\left( dy\right)\\
		&\leq& Cw\left(a\right)^{\beta}\left\vert u\left( t,\cdot\right)\right\vert_{1+\beta,\infty}\int_{ \left\vert q\left( t,z',y\right)\right\vert\leq \epsilon}w\left(\left\vert q\left( t,z',y\right)\right\vert\right) \nu_2\left( dy\right)\\
		&\leq& \frac{\varepsilon}{5}w\left(a\right)^{\beta}\left\vert u\left( t,\cdot\right)\right\vert_{1+\beta,\infty}.
	\end{eqnarray*}
	There also exists $\kappa\in\left( 0,1\right)$ so that 
	\begin{eqnarray*}
		I_{42} &=& w\left(a\right)^{\beta}\left[L^{\mu,\kappa}u\left( t,\cdot\right)\right]_{\beta}\int_{ \left\vert q\left( t,z',y\right)\right\vert> \epsilon,\left\vert y\right\vert\leq 1}w\left(\left\vert q\left( t,z',y\right)\right\vert\right)^{\kappa} \nu_2\left( dy\right)\\
		&\leq& Cl\left(\epsilon^{-1}\right)^{1-\kappa}w\left(a\right)^{\beta}\left\vert u\left( t,\cdot\right)\right\vert_{\kappa+\beta,\infty}\int_{\left\vert y\right\vert\leq 1}w\left(\left\vert q\left( t,z',y\right)\right\vert\right) \nu_2\left( dy\right)\\
		&\leq& Cl\left(\epsilon^{-1}\right)^{1-\kappa}w\left(a\right)^{\beta}\left\vert u\left( t,\cdot\right)\right\vert_{\kappa+\beta,\infty}.
	\end{eqnarray*}
	Applying \cite[Proposition 4]{mf}, we can always attain
	\begin{eqnarray*}
		I_{42} &\leq& w\left(a\right)^{\beta}\left(\frac{\varepsilon}{5} \left\vert u\right\vert_{1+\beta,\infty}+C_{\varepsilon}\left\vert u\right\vert_{0}\right),
	\end{eqnarray*}
	which concludes $I_{4} \leq w\left(a\right)^{\beta}\left(\frac{2\varepsilon}{5} \left\vert u\right\vert_{1+\beta,\infty}+C_{\varepsilon}\left\vert u\right\vert_{0}\right)$.
	
	In the mean time, by \textbf{B($K,\beta$)}, Lemma \ref{deri} and \cite[Proposition 4]{mf},
	\begin{eqnarray*}
		I_5 &\leq& 1_{\alpha\in\left( 1,2\right)}w\left( a\right)^{\beta}\int_{\left\vert y\right\vert> 1}\left[\nabla u\left( t,\cdot\right)\right]_{\beta}\left\vert q\left( t,z',y\right)\right\vert \wedge \left[ u\left( t,\cdot\right)\right]_{\beta}\nu_2\left( dy\right)\\
		&\leq& C1_{\alpha\in\left( 1,2\right)}w\left(a\right)^{\beta}\left(\frac{\varepsilon}{5} \left\vert u\right\vert_{1+\beta,\infty}+C_{\varepsilon}\left\vert u\right\vert_{0}\right).
	\end{eqnarray*}
	
	Besides, for any $\epsilon\in\left( 0,1\right)$,
	\begin{eqnarray*}
		I_6&\leq& \int_{  \left\vert q\left( t,z',y\right)\right\vert\leq \epsilon}1_{\alpha\in\left( 0,1\right]}\vert u\left( t,x_1+q\left( t,z',y\right)\right)-u\left( t,x_1\right)\\
		&&-u\left( t,x_2+q\left( t,z',y\right)\right)+u\left( t,x_2\right)\vert \nu_2\left( dy\right)\\
		&+& \int_{ \left\vert q\left( t,z',y\right)\right\vert> \epsilon}1_{\alpha\in\left( 0,1\right]}\vert u\left( t,x_1+q\left( t,z',y\right)\right)-u\left( t,x_1\right)\\
		&&-u\left( t,x_2+q\left( t,z',y\right)\right)+u\left( t,x_2\right)\vert \nu_2\left( dy\right)\\
		&:=&I_{61}+I_{62}.
	\end{eqnarray*}
	We first discuss the case $\alpha\in\left( 0,1\right)$. Applying Lemma \ref{dif}, we have
	\begin{eqnarray*}
		I_{61} &\leq& C\left[ L^{\mu}u\left( t,\cdot\right)\right]_{\beta}w\left( a\right)^{\beta}\int_{  \left\vert q\left( t,z',y\right)\right\vert\leq \epsilon}1_{\alpha\in\left( 0,1\right]} w\left( \left\vert q\left( t,z',y\right)\right\vert\right)\nu_2\left( dy\right)\\
		&\leq& Cw\left( a\right)^{\beta}\left\vert u\left( t,\cdot\right)\right\vert_{1+\beta,\infty }\int_{  \left\vert q\left( t,z',y\right)\right\vert\leq \epsilon}1_{\alpha\in\left( 0,1\right]} w\left( \left\vert q\left( t,z',y\right)\right\vert\right) \nu_2\left( dy\right).
	\end{eqnarray*}
	On the other hand,
	\begin{eqnarray*}
		I_{62} &\leq& Cw\left( a\right)^{\beta}\int_{  \left\vert q\left( t,z',y\right)\right\vert> \epsilon}\left[ L^{\mu,\frac{1}{2}}u\left( t,\cdot\right)\right]_{\beta} w\left( \left\vert q\left( t,z',y\right)\right\vert\right)^{\frac{1}{2}}\wedge \left[ u\right]_{\beta}\nu_2\left( dy\right)\\
		&\leq& Cw\left( a\right)^{\beta}l\left( \epsilon^{-1}\right)^{\frac{1}{2}}\left\vert u\left( t,\cdot\right)\right\vert_{\frac{1}{2}+\beta,\infty }\int_{  \left\vert q\left( t,z',y\right)\right\vert> \epsilon}\left( w\left(\left\vert q\left( t,z',y\right)\right\vert\right)\wedge 1\right) \nu_2\left( dy\right).
	\end{eqnarray*}
	As what we did for $I_4$, by choosing an appropriate $\epsilon$, we have 
	\begin{eqnarray*}
		I_{6} &\leq& w\left(a\right)^{\beta}\left(\frac{\varepsilon}{5} \left\vert u\right\vert_{1+\beta,\infty}+C_{\varepsilon}\left\vert u\right\vert_{0}\right),
	\end{eqnarray*}
	If $\alpha=1$, then 
	\begin{eqnarray*}
		I_{61} &\leq& C\left[ \nabla u\left( t,\cdot\right)\right]_{\beta'}w\left( a\right)^{\beta'}\int_{  \left\vert q\left( t,z',y\right)\right\vert\leq \epsilon}  \left\vert q\left( t,z',y\right)\right\vert\nu_2\left( dy\right)\\
		&\leq& Cw\left( a\right)^{\beta'}\left\vert u\left( t,\cdot\right)\right\vert_{1+\beta,\infty }\int_{  \left\vert q\left( t,z',y\right)\right\vert\leq \epsilon}\left\vert  q\left( t,z',y\right)\right\vert \nu_2\left( dy\right)
	\end{eqnarray*}
	for all $\beta'\in\left( 0,\beta\right)$. Examining the proof of Lemma \ref{deri}, we find that this constant $C$ is uniformly bounded under \textbf{$\mathbf{\tilde{A}}(w,l,\gamma)$}(ii) for all $\beta'\in\left( 0,\beta\right)$. Thus,
	\begin{eqnarray*}
		I_{61} &\leq& Cw\left( a\right)^{\beta}\left\vert u\left( t,\cdot\right)\right\vert_{1+\beta,\infty }\int_{  \left\vert q\left( t,z',y\right)\right\vert\leq \epsilon}\left\vert  q\left( t,z',y\right)\right\vert \nu_2\left( dy\right).
	\end{eqnarray*}
	Estimate $I_{62}$ in the same way as above. By choosing an appropriate $\epsilon$, we arrive at 
	\begin{eqnarray*}
		I_{6} &\leq& w\left(a\right)^{\beta}\left(\frac{\varepsilon}{5} \left\vert u\right\vert_{1+\beta,\infty}+C_{\varepsilon}\left\vert u\right\vert_{0}\right).
	\end{eqnarray*}

	As a summary, for all $\alpha\in\left( 0,2\right)$ and any $\varepsilon\in\left( 0,1\right)$, there exists $C_{\varepsilon}$ that is independent of $u$ so that
	\begin{eqnarray*}
		\sup_{t,z,z'}\left[ \tilde{Q}_{t,z,z'}u\left( t,\cdot\right)\right]_{\beta}&\leq&  \sup_{t,z,y}\left\vert \varrho\left( t,z,y\right)\right\vert\left(\varepsilon \left\vert u\right\vert_{1+\beta,\infty}+C_{\varepsilon}\left\vert u\right\vert_{0}\right).
	\end{eqnarray*}
\end{proof}

\begin{lemma}\label{cc}
	Let \textbf{B($K,\beta$)} hold. $\beta\in\left( 0,1/\alpha\right)$. Then for any $u\in \tilde{C}_{\infty ,\infty}^{1+\beta}\left(H_T\right)$ and any $\varepsilon\in\left( 0,1\right)$, there exists $C_{\varepsilon}$ independent of $u$ such that 
	\begin{eqnarray}\label{c1}
	\left\vert \mathcal{Q}u\left( t,\cdot\right)\right\vert_{\beta,\infty}&\leq&  \varepsilon \left\vert u\left( t,\cdot\right)\right\vert_{1+\beta,\infty}+C_{\varepsilon}\left\vert u\left( t,\cdot\right)\right\vert_{0}.
	\end{eqnarray}
\end{lemma}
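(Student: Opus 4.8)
The plan is to decompose $\mathcal{Q}u$ into its three constituent pieces — the drift term $1_{\alpha\in(1,2)}b(t,x)\cdot\nabla u(t,x)$, the zero-order term $p(t,x)u(t,x)$, and the integral term which is exactly $\tilde{Q}_{t,x,x}u(t,x)$ — and estimate each in the $|\cdot|_{\beta,\infty}$ norm separately, using the norm equivalence of Proposition \ref{thm4} to move freely between $|\cdot|_{\beta,\infty}$ and $|\cdot|_{\beta}$. For the zero-order term, \textbf{B($K,\beta$)}(i) gives $|p(t,\cdot)|_{\beta}\le K$, and since $\tilde{C}^{\beta}$ is a Banach algebra under the generalized H\"older norm (a standard consequence of the Leibniz-type estimate $[fg]_{\beta}\le |f|_0[g]_{\beta}+[f]_{\beta}|g|_0$), we get $|p(t,\cdot)u(t,\cdot)|_{\beta}\le C|u(t,\cdot)|_{\beta}\le C|u(t,\cdot)|_{\beta',\infty}$ for any $\beta'<\beta$; then interpolation via \cite[Proposition 4]{mf} upgrades this to $\varepsilon|u|_{1+\beta,\infty}+C_{\varepsilon}|u|_0$. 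For the drift term, Lemma \ref{deri} together with \textbf{$\mathbf{\tilde{A}}(w,l,\gamma)$}(iii) (specifically the Remark after Lemma \ref{deri}) gives that $u(t,\cdot)$ has a classical gradient with $|\nabla u(t,\cdot)|_{\beta',\infty}\le C|u(t,\cdot)|_{1+\beta',\infty}$-type control for suitable exponents, and again combined with $|b(t,\cdot)|_{\beta}\le K$ and the algebra property plus interpolation we obtain the same bound $\varepsilon|u|_{1+\beta,\infty}+C_{\varepsilon}|u|_0$.

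The integral term is the substantive part, but it is essentially already done: by Lemma \ref{bb} we have
\begin{eqnarray*}
\sup_{t,z,z'}\left\vert \tilde{Q}_{t,z,z'}u\left( t,\cdot\right)\right\vert_{0}&\leq& C\sup_{t,z,y}\left\vert \rho\left( t,z,y\right)\right\vert\left\vert u\right\vert_{1+\beta',\infty},\\
\sup_{t,z,z'}\left[ \tilde{Q}_{t,z,z'}u\left( t,\cdot\right)\right]_{\beta}&\leq& \sup_{t,z,y}\left\vert \rho\left( t,z,y\right)\right\vert\left(\varepsilon \left\vert u\right\vert_{1+\beta,\infty}+C_{\varepsilon}\left\vert u\right\vert_{0}\right),
\end{eqnarray*}
and in particular the diagonal choice $z=z'=x$ is permitted since the bounds are uniform in $z,z'$. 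Adding the $|\cdot|_0$ and $[\cdot]_{\beta}$ estimates gives $|\tilde{Q}_{t,x,x}u(t,\cdot)|_{\beta}\le \varepsilon|u|_{1+\beta,\infty}+C_{\varepsilon}|u|_0$ after one more application of \cite[Proposition 4]{mf} to absorb the $|u|_{1+\beta',\infty}$ term from the $|\cdot|_0$ estimate into $\tfrac{\varepsilon}{2}|u|_{1+\beta,\infty}+C_{\varepsilon}|u|_0$; finally Proposition \ref{thm4} converts $|\cdot|_{\beta}$ to $|\cdot|_{\beta,\infty}$. Summing the three contributions and relabeling $\varepsilon$ yields \eqref{c1}.

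The only mild subtlety — and the one place to be careful — is the bookkeeping with the interpolation inequality: the $|\cdot|_0$ estimates on all three pieces produce terms of the form $C|u|_{1+\beta',\infty}$ with $\beta'<\beta$ fixed but $C$ large, and one must invoke \cite[Proposition 4]{mf} to trade each such term for $\tfrac{\varepsilon}{6}|u|_{1+\beta,\infty}+C_{\varepsilon}|u|_0$; provided the constants $C$ from Lemma \ref{deri} are uniform in $\beta'$ over $(0,\beta)$ — which holds under \textbf{$\mathbf{\tilde{A}}(w,l,\gamma)$}(ii), exactly as noted in the treatment of $I_{61}$ in the proof of Lemma \ref{bb} — this causes no circularity. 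No genuinely new estimate is needed beyond assembling Lemmas \ref{deri}, \ref{bb}, the algebra property of $\tilde{C}^{\beta}$, and Propositions \ref{thm4} and \cite[Proposition 4]{mf}.
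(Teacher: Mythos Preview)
There is a genuine gap in your treatment of the integral term. You write that ``the diagonal choice $z=z'=x$ is permitted since the bounds are uniform in $z,z'$,'' but this conflates two different quantities. Lemma~\ref{bb} bounds, for each \emph{fixed} pair $(z,z')$, the seminorm
\[
\bigl[\tilde{Q}_{t,z,z'}u(t,\cdot)\bigr]_\beta
=\sup_{x,h\neq 0}\frac{\bigl|\tilde{Q}_{t,z,z'}u(t,x+h)-\tilde{Q}_{t,z,z'}u(t,x)\bigr|}{w(|h|)^\beta},
\]
uniformly in $(z,z')$. What you need for $\mathcal{Q}u$ is instead the seminorm of $x\mapsto\tilde{Q}_{t,x,x}u(t,x)$, in which \emph{all three} occurrences of $x$ move simultaneously. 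Uniformity over fixed parameters does not give this: the difference $\tilde{Q}_{t,x+h,x+h}u(t,x+h)-\tilde{Q}_{t,x,x}u(t,x)$ must be split as
\[
\bigl(\tilde{Q}_{t,x+h,x+h}-\tilde{Q}_{t,x,x+h}\bigr)u(t,x+h)
+\bigl(\tilde{Q}_{t,x,x+h}-\tilde{Q}_{t,x,x}\bigr)u(t,x+h)
+\bigl(\tilde{Q}_{t,x,x}u(t,x+h)-\tilde{Q}_{t,x,x}u(t,x)\bigr).
\]
Only the third piece is controlled by the seminorm estimate of Lemma~\ref{bb}, and the first piece (variation in $\varrho$) follows from the sup-norm estimate of Lemma~\ref{bb} together with $[\varrho(t,\cdot,y)]_\beta\le K$. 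The second piece---variation of the jump displacement $q(t,\cdot,y)$---is not touched by Lemma~\ref{bb} at all, and controlling it is precisely where the hypotheses \textbf{B($K,\beta$)}(v) enter. This is the substantive part of the paper's proof: one must expand the second piece into integrals of the form $I_1,I_2,I_3$ (over $|y|\le 1$ with $\alpha\in(1,2)$, over $|y|>1$ with $\alpha\in(1,2)$, and over all $y$ with $\alpha\in(0,1]$), and estimate each using Lemmas~\ref{dif}, \ref{diff}, \ref{deri} together with the specific moment-type bounds on $q(t,x+h,y)-q(t,x,y)$ assumed in \textbf{B($K,\beta$)}(v). Your plan, as written, never invokes (v) and therefore cannot close.

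Your handling of the drift and zero-order terms is fine and matches the paper's approach.
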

\begin{proof}
	Note that $\mathcal{Q}u\left( t,x\right)=Q_{t,x,x}u\left( t,x\right)$. By Lemmas \ref{bb} and \ref{deri}
	\begin{eqnarray*}
		\left\vert \mathcal{Q}u\left( t,\cdot\right)\right\vert_{0}&\leq&  C \left\vert u\left( t,\cdot\right)\right\vert_{1+\beta',\infty}
	\end{eqnarray*}
	for some $\beta'\in\left( 0,\beta\right)$. Meanwhile, for any $x,h\in\mathbf{R}^d$,
	\begin{eqnarray*}
		&&\left\vert Q_{t,x+h,x+h}u\left( t,x+h\right)-Q_{t,x,x}u\left( t,x\right)\right\vert\\
		&\leq& 1_{\alpha\in\left( 1,2\right)}\left\vert b\left( t,x+h\right)\nabla u\left( t,x+h\right)-b\left( t,x\right)\nabla u\left( t,x\right)\right\vert \\
		&+&\left\vert p\left( t,x+h\right)u\left( t,x+h\right)-p\left( t,x\right)u\left( t,x\right)\right\vert\\
		&+& \left\vert \tilde{Q}_{t,x+h,x+h}u\left( t,x+h\right)-\tilde{Q}_{t,x,x}u\left( t,x\right)\right\vert.
	\end{eqnarray*}
	
	Obviously,
	\begin{eqnarray*}
		&& 1_{\alpha\in\left( 1,2\right)}\left\vert b\left( t,x+h\right)\nabla u\left( t,x+h\right)-b\left( t,x\right)\nabla u\left( t,x\right)\right\vert \\
		&+&\left\vert p\left( t,x+h\right)u\left( t,x+h\right)-p\left( t,x\right)u\left( t,x\right)\right\vert\\
		&\leq& Cw\left( \left\vert h\right\vert\right)^{\beta}\left(\left\vert u\left( t,\cdot\right)\right\vert_{\beta}+1_{\alpha\in\left( 1,2\right)}\left\vert \nabla u\left( t,\cdot\right)\right\vert_{\beta}\right).
	\end{eqnarray*}
	In the mean time, 
	\begin{eqnarray*}
		&&\left\vert \tilde{Q}_{t,x+h,x+h}u\left( t,x+h\right)-\tilde{Q}_{t,x,x}u\left( t,x\right)\right\vert\\
		&\leq& \left\vert \tilde{Q}_{t,x+h,x+h}u\left( t,x+h\right)-\tilde{Q}_{t,x,x+h}u\left( t,x+h\right)\right\vert\\
		&+& \left\vert \tilde{Q}_{t,x,x+h}u\left( t,x+h\right)-\tilde{Q}_{t,x,x}u\left( t,x+h\right)\right\vert\\
		&+& \left\vert \tilde{Q}_{t,x,x}u\left( t,x+h\right)-\tilde{Q}_{t,x,x}u\left( t,x\right)\right\vert.
	\end{eqnarray*}
	By Lemma \ref{bb},
	\begin{eqnarray*}
		&&\left\vert \tilde{Q}_{t,x+h,x+h}u\left( t,x+h\right)-\tilde{Q}_{t,x,x+h}u\left( t,x+h\right)\right\vert\\
		&\leq&C\sup_{t,y}\left[ \varrho\left( t,\cdot,y\right)\right]_{\beta} w\left( \left\vert h\right\vert\right)^{\beta}\left\vert u\right\vert_{1+\beta',\infty}
	\end{eqnarray*}
	for some $\beta'\in\left( 0,\beta\right)$, and
	\begin{eqnarray*}
		&&\left\vert \tilde{Q}_{t,x,x}u\left( t,x+h\right)-\tilde{Q}_{t,x,x}u\left( t,x\right)\right\vert\\
		&\leq&C\sup_{t,z,y}\left\vert \varrho\left( t,z,y\right)\right\vert w\left( \left\vert h\right\vert\right)^{\beta}\left( \varepsilon \left\vert u\right\vert_{1+\beta,\infty}+C_{\varepsilon}\left\vert u\right\vert_{0}\right).
	\end{eqnarray*}
	Besides, 
	\begin{eqnarray*}
		&&\left\vert \tilde{Q}_{t,x,x+h}u\left( t,x+h\right)-\tilde{Q}_{t,x,x}u\left( t,x+h\right)\right\vert\\
		&\leq& C\int_{ \left\vert y\right\vert\leq 1}1_{\alpha\in\left( 1,2\right)}\left\vert \nabla^{\alpha}u\left( t,x+h;q\left( t,x+h,y\right)\right)-\nabla^{\alpha}u\left( t,x+h;q\left( t,x,y\right)\right)\right\vert \nu_2\left( dy\right)\\
		&+& C\int_{\left\vert y\right\vert> 1}1_{\alpha\in\left( 1,2\right)}\left\vert u\left( t,x+h+q\left( t,x+h,y\right)\right)-u\left( t,x+h+q\left( t,x,y\right)\right)\right\vert \nu_2\left( dy\right)\\
		&+& C\int_{  \mathbf{R}^d_0}1_{\alpha\in\left( 0,1\right]}\left\vert u\left( t,x+h+q\left( t,x+h,y\right)\right)-u\left( t,x+h+q\left( t,x,y\right)\right)\right\vert \nu_2\left( dy\right)\\
		&:=&C\left(I_1+I_2+I_3\right).
	\end{eqnarray*}
	Similarly as what we did in Lemma \ref{bb}, 
	\begin{eqnarray*}
		I_1 &\leq& C\int_{\left\vert y\right\vert\leq 1}\int_{0}^1\left\vert \nabla u\left( t,x+h+\theta q\left( t,x+h,y\right)\right)-\nabla u\left( t,x+h\right)\right\vert d\theta\\
		&&\left\vert q\left( t,x+h,y\right)-q\left( t,x,y\right)\right\vert \nu_2\left( dy\right)\\
		&+& C\int_{\left\vert y\right\vert\leq 1}\int_{0}^1\vert \nabla u\left( t,x+h+\theta q\left( t,x+h,y\right)\right)\\
		&&-\nabla u\left( t,x+h+\theta q\left( t,x,y\right)\right)\vert d\theta\left\vert q\left( t,x,y\right)\right\vert \nu_2\left( dy\right)\\
		&\leq& C\left[\nabla u\left( t,\cdot\right)\right]_{\beta}\int_{\left\vert y\right\vert\leq 1}w\left(  \left\vert q\left( t,x+h,y\right)\right\vert\right)^{\beta}\left\vert q\left( t,x+h,y\right)-q\left( t,x,y\right)\right\vert\nu_2\left( dy\right)\\
		&+& C\left[\nabla u\left( t,\cdot\right)\right]_{\beta}\int_{\left\vert y\right\vert\leq 1}w\left(  \left\vert q\left( t,x+h,y\right)-q\left( t,x,y\right)\right\vert\right)^{\beta}\left\vert q\left( t,x,y\right)\right\vert\nu_2\left( dy\right)\\
		&\leq& C \left\vert u\left( t,\cdot\right)\right\vert_{\kappa+\beta,\infty} w\left( \left\vert h\right\vert\right)^{\beta}
	\end{eqnarray*}
	for some $\kappa\in\left( 0,1\right)$.
	\begin{eqnarray*}
		I_2 &\leq& C\int_{\left\vert y\right\vert> 1}\left\vert\nabla u\left( t,\cdot\right) \right\vert_{0}\left\vert q\left( t,z'+h,y\right)-q\left( t,z',y\right)\right\vert\wedge \left\vert u \right\vert_{0} \nu_2\left( dy\right)\\
		&\leq& C\left\vert u\left( t,\cdot\right)  \right\vert_{1+\beta',\infty} w\left( \left\vert h\right\vert\right)^{\beta}
	\end{eqnarray*}
	for some $\beta'\in\left( 0,\beta\right)$. And
	\begin{eqnarray*}
		I_3 &\leq& C\int_{\mathbf{R}^d_0}1_{\alpha\in\left( 0,1\right)}\left(\left\vert L^{\mu} u\left( t,\cdot\right)  \right\vert_{0}w\left( \left\vert q\left( t,z'+h,y\right)-q\left( t,z',y\right)\right\vert\right)\wedge \left\vert u \right\vert_{0}\right)\nu_2\left( dy\right)\\
		&&+C\int_{\mathbf{R}^d_0}1_{\alpha=1}\left(\left\vert \nabla u\left( t,\cdot\right)  \right\vert_{0} \left\vert q\left( t,z'+h,y\right)-q\left( t,z',y\right)\right\vert\wedge \left\vert u \right\vert_{0}\right)\nu_2\left( dy\right)\\
		&\leq& C1_{\alpha\in\left( 0,1\right]} \left\vert u\left( t,\cdot\right)  \right\vert_{1+\beta',\infty}w\left( \left\vert h\right\vert\right)^{\beta}
	\end{eqnarray*}
	for some $\beta'\in\left( 0,\beta\right)$.
	
	Summarizing, we obtain $\eqref{c1}$.
\end{proof}

\section{Proof of the Main Result}
\subsection{Auxiliary Results} In this section, we state or prove well-posedness for integro-differential equations with space-independent operators. 

\begin{theorem}\cite[Theorem 1.1]{mf}\label{thm2}
	Let $\beta\in\left(0,\infty\right), \lambda\geq 0$ and $\nu$ be a L\'{e}vy measure satisfying \textbf{A(w,l)}. If $f\left( t,x\right)\in  \tilde{C}^{\beta}_{\infty,\infty}\left(H_T\right)$. Then there is a unique solution $u\in\left( t,x\right)\in \tilde{C}^{1+\beta}_{\infty,\infty}\left(H_T\right)$ to
	\begin{eqnarray}\label{eq}
	\partial_t u\left( t,x\right)&=&Lu\left( t,x\right)-\lambda u\left( t,x\right)+f\left( t,x\right), \lambda\geq 0,\\
	u\left( 0,x\right)&=& 0,\quad\left( t,x\right)\in H_T,\nonumber
	\end{eqnarray}
	where for any function $\varphi\in  C_b^{2}\left(\mathbf{R}^d\right)$,
	\begin{eqnarray}
	L\varphi\left( x\right):=\int\left[ \varphi\left( x+y\right)-\varphi\left( x\right)-\chi_{\alpha}\left( y\right)y\cdot \nabla \varphi\left( x\right)\right]\nu\left(d y\right).
	\end{eqnarray}
	Moreover, there exists a constant $C$ depending on $\kappa,\beta, d, T,\mu,\nu$ such that
	\begin{eqnarray}
	\left\vert u\right\vert_{\beta,\infty}&\leq& C\left( \lambda^{-1}\wedge T\right) \left\vert f\right\vert_{\beta,\infty},\label{eest6}\\
	\left\vert u\right\vert_{1+\beta,\infty}&\leq& C\left\vert f\right\vert_{\beta,\infty}\label{eest3}
	\end{eqnarray}
	And there is a constant $C$ depending on $\kappa,\beta,d, T,\mu,\nu$ such that for all $0\leq s<t\leq T$, $\kappa\in\left[ 0,1\right]$, 
	\begin{equation}\label{eest4}
	\left\vert u\left(t,\cdot\right)-u\left(s,\cdot\right)\right\vert_{\kappa+\beta,\infty}\leq C\left\vert t-s\right\vert^{1-\kappa}\left\vert f\right\vert_{\beta,\infty}.
	\end{equation}
\end{theorem}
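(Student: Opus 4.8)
The plan is to construct the solution by an explicit Duhamel formula and to read off the Schauder-type bounds from the probabilistic representations of Lemmas \ref{rep}--\ref{rep2}, precisely because the non-smoothness of $\psi^{\nu}$ rules out the Mikhlin--H\"ormander multiplier theorem. Let $P_{r}g(x):=\mathbf{E}\,g(x+Z_{r}^{\nu})$ be the transition semigroup of $Z_{t}^{\nu}$ and set $u(t,x):=\int_{0}^{t}e^{-\lambda(t-s)}P_{t-s}f(s,\cdot)(x)\,ds$. When $f(s,\cdot)\in C_{b}^{\infty}(\mathbf{R}^{d})$, differentiating under the integral and using $\partial_{r}P_{r}\varphi=L^{\nu}P_{r}\varphi=P_{r}L^{\nu}\varphi$ on $C_{b}^{\infty}$ (a consequence of the representations in Lemma \ref{bij}) shows that $u$ solves \eqref{eq}; for general $f\in\tilde{C}^{\beta}_{\infty,\infty}(H_{T})$ one approximates by smooth functions as in \cite[Proposition 5]{mf} and passes to the limit with the a priori estimates below. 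Uniqueness holds because the difference of two solutions solves the homogeneous problem with zero data, and then the contraction property of $P_{r}$ (or Lemma \ref{bij}) forces it to vanish.

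Since $P_{r}$ and $L^{\mu}$ both commute with convolution, one has, for each dyadic block $\varphi_{j}$, the identities $(L^{\mu}u)\ast\varphi_{j}(t,\cdot)=\int_{0}^{t}e^{-\lambda(t-s)}L^{\mu}P_{t-s}\bigl(f(s,\cdot)\ast\varphi_{j}\bigr)\,ds$ and $u\ast\varphi_{j}(t,\cdot)=\int_{0}^{t}e^{-\lambda(t-s)}P_{t-s}\bigl(f(s,\cdot)\ast\varphi_{j}\bigr)\,ds$. The bound \eqref{eest6} follows at once from $\|P_{r}h\|_{0}\le\|h\|_{0}$ and $\|(f(s,\cdot))\ast\varphi_{j}\|_{0}\le w(N^{-j})^{\beta}|f|_{\beta,\infty}$, integrating $e^{-\lambda(t-s)}$ over $[0,t]$. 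The real content is \eqref{eest3}, which I would reduce to the single-block smoothing estimate
\begin{eqnarray*}
\bigl\|L^{\mu}P_{r}h\bigr\|_{0}\ \le\ C\,w(N^{-j})^{-1}\,e^{-cr/w(N^{-j})}\,\|h\|_{0}
\end{eqnarray*}
for $h$ spectrally supported in the annulus $\{N^{j-1}\le|\xi|\le N^{j+1}\}$ (the block $j=0$ is treated separately and trivially). Granting this, $\bigl\|(L^{\mu}u)\ast\varphi_{j}\bigr\|_{0}\le C\,w(N^{-j})^{-1}\|f(s)\ast\varphi_{j}\|_{0}\int_{0}^{t}e^{-c(t-s)/w(N^{-j})}\,ds\le C\,w(N^{-j})^{\beta}|f|_{\beta,\infty}$ uniformly in $j$, which with \eqref{eest6} and Lemma \ref{cont} yields \eqref{eest3}.

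To prove the single-block estimate I would exploit the scaling normalization \eqref{meas}: with $R=N^{-j}$, the identities $\psi^{\mu}(\xi)=w(R)^{-1}\psi^{\tilde{\mu}_{R}}(R\xi)$ and $p(t,z)=R^{-d}p^{R}(w(R)^{-1}t,R^{-1}z)$ convert the block estimate at scale $j$ into a unit-scale estimate for the normalized measures $\tilde{\nu}_{R},\tilde{\mu}_{R}$, whose relevant constants are uniform in $R$ by \textbf{A(w,l)}(iii)--(iv). At unit scale the non-degeneracy in \textbf{A(w,l)}(i) gives $\Re\psi^{\tilde{\nu}_{R}}\le-c$ uniformly on the annulus $\{N^{-1}\le|\xi|\le N\}$, which produces the exponential factor $e^{-cr/w(R)}$ from the semigroup; writing $L^{\mu}$ either directly (for $\kappa=1$) or through the representation of Lemma \ref{rep} (for fractional exponents), and inserting the density gradient bound $\int|\nabla p^{R}(t,z)|\,dz\le C\gamma(t)^{-1}$ of Lemma \ref{lemma3} (cf.\ \cite{cr}), supplies the remaining factor of order $w(R)^{-1}$; the $t$-integral hidden in the representation converges thanks to the integrability of $\gamma$ built into \textbf{A(w,l)}. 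This scaling-plus-heat-kernel argument is the technical core and the main obstacle: because $\psi^{\nu}$ is generally non-smooth, the $L^{1}$-bound on the kernel of $L^{\mu}P_{r}$ restricted to a dyadic block cannot be obtained from a multiplier theorem, so every such estimate must be channelled through Lemmas \ref{rep}--\ref{rep2} and the sharp density estimates, with the scaling hypotheses doing the work of uniformity in $j$.

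Finally, for \eqref{eest4} I would establish the two endpoints and interpolate. The case $\kappa=1$ is immediate from \eqref{eest3} at times $t$ and $s$. For $\kappa=0$, $u(t,\cdot)-u(s,\cdot)=\int_{s}^{t}\bigl(L^{\nu}u-\lambda u+f\bigr)(r,\cdot)\,dr$, and each integrand has $\tilde{C}^{\beta}_{\infty,\infty}$-norm $\le C|f|_{\beta,\infty}$ — for $L^{\nu}u$ by Lemma \ref{cont} together with \eqref{eest3}, for $\lambda u$ by \eqref{eest6} — so Minkowski's inequality gives $|u(t,\cdot)-u(s,\cdot)|_{\beta,\infty}\le C|t-s|\,|f|_{\beta,\infty}$. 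The intermediate exponents then follow from the elementary Besov interpolation inequality $|v|_{\kappa+\beta,\infty}\le|v|_{\beta,\infty}^{1-\kappa}|v|_{1+\beta,\infty}^{\kappa}$, obtained by balancing $w(N^{-j})^{-\kappa-\beta}\|v\ast\varphi_{j}\|_{0}$ against the two available block bounds $\|v\ast\varphi_{j}\|_{0}\le w(N^{-j})^{\beta}|v|_{\beta,\infty}$ and $\|v\ast\varphi_{j}\|_{0}\le w(N^{-j})^{1+\beta}|v|_{1+\beta,\infty}$.
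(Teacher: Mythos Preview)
The paper does not prove this statement: Theorem~\ref{thm2} is quoted verbatim as \cite[Theorem~1.1]{mf} and carries no proof here. There is therefore nothing in the present paper to compare your argument against line by line; the only internal evidence is the proof of Theorem~\ref{thm1}, which establishes the $G$-twisted analogue by an It\^o-formula computation for existence and uniqueness and then reduces all the norm estimates back to Theorem~\ref{thm2} via the change of variables $g(t,x)=f(t,Gx)$ and Lemma~\ref{lem1}, deferring the actual H\"older estimates to ``section~5 of \cite{mf}''.

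Your sketch is consistent with that picture and with the methodology advertised in the introduction: Duhamel representation $u(t,x)=\int_0^t e^{-\lambda(t-s)}\mathbf{E}f(s,x+Z^{\nu}_{t-s})\,ds$, block-wise analysis after commuting $P_r$ with convolution, and reduction to a unit-scale estimate via the scaling identities $\psi^{\mu}(\xi)=w(R)^{-1}\psi^{\tilde\mu_R}(R\xi)$ and the density bounds of Lemma~\ref{lemma3}. One point to tighten: the phrase ``the integrability of $\gamma$ built into \textbf{A(w,l)}'' is misleading, since \textbf{A(w,l)} by itself imposes no explicit integrability on $\gamma$; the single-block bound you want does not in fact need any such hypothesis, because the exponential factor $e^{-cr/w(N^{-j})}$ coming from the spectral gap on the annulus already makes the $r$-integral convergent. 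The role of Lemma~\ref{lemma3} is rather to give uniform-in-$R$ control of $\lVert L^{\tilde\mu_R}P^{\tilde\nu_R}_t\tilde\varphi\rVert_{L^1}$ at unit scale (combine it with the $L^1$ statement in Lemma~\ref{rep}), not to handle a divergent time integral. With that correction your outline matches the intended strategy.
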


\begin{theorem}\label{thm1}
	Let $\nu$ be a L\'{e}vy measure, $\alpha\in\left( 0,2\right), \beta\in\left(0,1\right), \lambda\geq 0$. $G$ is an invertible $d\times d$-matrix. Assume that $f\left( t,x\right)\in \tilde{C}^{\beta}_{\infty,\infty}\left(H_T\right)$. Then there is a unique solution $u\in\left( t,x\right)\in \tilde{C}^{1+\beta}_{\infty,\infty}\left(H_T\right)$ to 
	\begin{eqnarray}\label{eeq1}
	\partial_t u\left( t,x\right)&=&Lu\left( t,x\right)-\lambda u\left( t,x\right)+f\left( t,x\right), \lambda\geq 0,\\
	u\left( 0,x\right)&=& 0,\quad\left( t,x\right)\in H_T,\nonumber
	\end{eqnarray}
	 where for any function $\varphi\in  C_b^{2}\left(\mathbf{R}^d\right)$,
	\begin{eqnarray*}
		L\varphi\left( x\right)&:=&\int\left[\varphi\left( x+Gy\right)-\varphi\left( x\right)-\chi_{\alpha}\left( y\right)Gy\cdot \nabla \varphi\left( x\right) \right]\nu\left(d y\right).
	\end{eqnarray*}
	Moreover, there exists a constant $C$ depending on $\kappa,\beta, d, T,\mu,\nu,\left\Vert G^{-1}\right\Vert,\left\Vert G\right\Vert$ such that
	\begin{eqnarray}
	\left\vert u\right\vert_{\beta,\infty}&\leq& C\left( \lambda^{-1}\wedge T\right) \left\vert f\right\vert_{\beta,\infty},\label{est6}\\
	\left\vert u\right\vert_{1+\beta,\infty}&\leq& C\left\vert f\right\vert_{\beta,\infty}\label{est3}
	\end{eqnarray}
	And there is a constant $C$ depending on $\kappa,\beta,d, T,\mu,\nu,\left\Vert G^{-1}\right\Vert,\left\Vert G\right\Vert$ such that for all $0\leq s<t\leq T$, $\kappa\in\left[ 0,1\right]$,
	\begin{equation}\label{est4}
	\left\vert u\left(t,\cdot\right)-u\left(s,\cdot\right)\right\vert_{\kappa+\beta,\infty}\leq C\left\vert t-s\right\vert^{1-\kappa}\left\vert f\right\vert_{\beta,\infty}.
	\end{equation}
\end{theorem}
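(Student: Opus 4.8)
The plan is to reduce \eqref{eeq1} to the space-independent problem \eqref{eq} of Theorem \ref{thm2} by the linear change of variables that turns the displacement $Gy$ into $y$. Concretely: given $f\in\tilde{C}^{\beta}_{\infty,\infty}(H_T)$, set $\tilde{f}(t,x):=f(t,Gx)$. Since $G$ is a fixed invertible matrix with $\Vert G\Vert<\infty$ and $\Vert G^{-1}\Vert<\infty$, Lemma \ref{lem1} gives $\tilde{f}(t,\cdot)\in\tilde{C}^{\beta}_{\infty,\infty}(\mathbf{R}^d)$ with $|\tilde{f}|_{\beta,\infty}\le C|f|_{\beta,\infty}$, where $C=C(\Vert G\Vert,\Vert G^{-1}\Vert)$. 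Theorem \ref{thm2} then furnishes a unique $v\in\tilde{C}^{1+\beta}_{\infty,\infty}(H_T)$ solving $\partial_t v=L^{\nu}v-\lambda v+\tilde{f}$ with $v(0,\cdot)=0$, together with the estimates \eqref{eest6}, \eqref{eest3}, \eqref{eest4} for $v$ in terms of $\tilde{f}$.

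Next I would put $u(t,x):=v(t,G^{-1}x)$ and verify it solves \eqref{eeq1}. Writing $v(t,\cdot)=u(t,G\,\cdot)$, the identity $Lf(G\,\cdot)=L^{\nu}\bigl(f(G\,\cdot)\bigr)$ established in the proof of Proposition \ref{ppr1} yields $Lu(t,x)=L^{\nu}v(t,G^{-1}x)$; combined with $\partial_t u(t,x)=\partial_t v(t,G^{-1}x)$ and $f(t,x)=\tilde{f}(t,G^{-1}x)$, the equation for $v$ gives $\partial_t u=Lu-\lambda u+f$ and $u(0,\cdot)=0$. Applying Lemma \ref{lem1} once more, now with the matrix $G^{-1}$ in place of $G$ (its hypotheses are symmetric under $G\leftrightarrow G^{-1}$) and at smoothness level $1+\beta$ --- and, for the time-regularity bound, at level $\kappa+\beta\in(0,2)$ --- shows $u\in\tilde{C}^{1+\beta}_{\infty,\infty}(H_T)$ and converts the estimates for $v$ into \eqref{est6}, \eqref{est3}, \eqref{est4}; for instance $|u|_{1+\beta,\infty}=|v(t,G^{-1}\cdot)|_{1+\beta,\infty}\le C|v|_{1+\beta,\infty}\le C|\tilde{f}|_{\beta,\infty}\le C|f|_{\beta,\infty}$, and similarly for the other two.

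For uniqueness, if $u_1,u_2\in\tilde{C}^{1+\beta}_{\infty,\infty}(H_T)$ both solve \eqref{eeq1}, then $v_i(t,x):=u_i(t,Gx)$ belong to $\tilde{C}^{1+\beta}_{\infty,\infty}(H_T)$ by Lemma \ref{lem1} and both solve $\partial_t v_i=L^{\nu}v_i-\lambda v_i+\tilde{f}$ with zero initial value, so $v_1=v_2$ by the uniqueness part of Theorem \ref{thm2}, whence $u_1=u_2$.

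The step I expect to be the main obstacle is justifying the operator identity $Lu(t,x)=L^{\nu}v(t,G^{-1}x)$ for $u\in\tilde{C}^{1+\beta}_{\infty,\infty}$, since Proposition \ref{ppr1} records it only for $C_b^2$ functions, whereas here $L$ and $L^{\nu}$ must be understood as the extended operators of Lemma \ref{cont}. I would handle it by the approximation already used in Lemmas \ref{dif}--\ref{diff}: pick $v_n\in C_b^{\infty}\cap\tilde{C}^{1+\beta}_{\infty,\infty}$ with $L^{\mu,\kappa}v_n\to L^{\mu,\kappa}u(t,\cdot)$ for the relevant $\kappa$, note that $w\mapsto w(G\,\cdot)$ is continuous on these spaces by Lemma \ref{lem1}, write the identity for each smooth $v_n$, and pass to the limit via the uniform-convergence statement in Lemma \ref{cont}. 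Everything else is bookkeeping of constants through Lemma \ref{lem1} and Theorem \ref{thm2}.
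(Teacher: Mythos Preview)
Your reduction-by-change-of-variables is correct and is in fact the same mechanism the paper uses for the estimates: the paper sets $g(t,x)=f(t,Gx)$, observes that $u(t,G\cdot)$ coincides with the solution of the $L^{\nu}$-problem with data $g$, and then invokes Lemma~\ref{lem1} on both ends exactly as you do. So for \eqref{est6}, \eqref{est3}, \eqref{est4} the two arguments are identical.

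The one genuine difference is in existence and uniqueness. The paper does \emph{not} pull these back from Theorem~\ref{thm2}; instead it works directly with the probabilistic representation $u(t,x)=\int_0^t e^{-\lambda(t-s)}\mathbf{E}f(s,x+GZ^{\nu}_{t-s})\,ds$ and verifies via It\^o's formula that this solves \eqref{eeq1}, and likewise uses It\^o on $e^{-\lambda s}u(t-s,x+GZ^{\nu}_s)$ for uniqueness. Your route --- solve the $L^{\nu}$-equation first and then undo the substitution --- is cleaner and avoids redoing the stochastic calculus, at the price of the extra verification you flag: that the operator identity $L\varphi(Gx)=L^{\nu}\bigl(\varphi(G\cdot)\bigr)(x)$, trivial for $C_b^{2}$ functions, persists for the extended operator on $\tilde{C}^{1+\beta}_{\infty,\infty}$. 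Your proposed approximation argument (via Lemma~\ref{cont} and the $C_b^{\infty}$ approximants of \cite[Proposition~5]{mf}) handles this; since $w\mapsto w(G\cdot)$ is bounded on every $\tilde{C}^{s}_{\infty,\infty}$ by Lemma~\ref{lem1}, the uniform convergence passes through. What the paper's approach buys in exchange is an explicit formula for $u$ in terms of the driving L\'evy process, which is consistent with the probabilistic flavor of the rest of the paper.
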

\begin{proof}
	We first assume $f\left( t,x\right)\in  C_b^{\infty}\left(H_T\right)\cap \tilde{C}^{\beta}_{\infty,\infty}\left(H_T\right)$.
	
	\textsc{Existence. }Denote $F\left(r,Z^{\nu}_r\right)=e^{-\lambda\left( r-s\right)}f\left(s, x+GZ^{\nu}_r-GZ^{\nu}_s\right), s\leq r\leq t,$ and apply the It\^{o} formula to $F\left(r,Z^{\nu}_r\right)$ on $\left[s,t\right]$.
	\begin{eqnarray*}
		&& e^{-\lambda\left( t-s\right)}f\left(s, x+GZ^{\nu}_t-GZ^{\nu}_s\right)-f\left( s,x\right)\\
		&=& -\lambda\int_s^t  F\left(r,Z^{\nu}_{r}\right)dr+\int_s^t\int \chi_{\alpha}\left( y\right)y\cdot\nabla F\left(r, Z^{\nu}_{r-}\right)\tilde{J}\left(dr,dy\right)\\
		&&+ \int_s^t \int \left[ F\left(r,Z^{\nu}_{r-}+y\right)-F\left(r,Z^{\nu}_{r-}\right)-\chi_{\alpha}\left(y\right) y\cdot \nabla F\left(r,Z^{\nu}_{r-}\right)\right]J\left( dr,dy\right).
	\end{eqnarray*}
	Take expectation for both sides.
	\begin{eqnarray*}
		&&  e^{-\lambda\left( t-s\right)}\mathbf{E}f\left(s, x+GZ^{\nu}_t-GZ^{\nu}_s\right)-f\left( s,x\right)\\
		&=& -\lambda\int_s^t  e^{-\lambda\left( r-s\right)}\mathbf{E}f\left(s, x+GZ^{\nu}_r-GZ^{\nu}_s\right)dr\\
		&&+\int_s^t L e^{-\lambda\left( r-s\right)}\mathbf{E}f\left(s, x+GZ^{\nu}_{r-}-GZ^{\nu}_s\right)dr.
	\end{eqnarray*}
	Integrate both sides over $\left[0,t\right]$ with respect to $s$ and obtain
	\begin{eqnarray*}
		&&  \int_0^t e^{-\lambda\left( t-s\right)}\mathbf{E}f\left(s, x+GZ^{\nu}_t-GZ^{\nu}_s\right)ds-\int_0^t f\left( s,x\right)ds\\
		&=& -\lambda\int_0^t\int_0^r  e^{-\lambda\left( r-s\right)}\mathbf{E}f\left(s, x+GZ^{\nu}_r-GZ^{\pi}_s\right)dsdr\\
		&&+\int_0^t L\int_0^r  e^{-\lambda\left( r-s\right)}\mathbf{E}f\left(s, x+GZ^{\nu}_{r-}-GZ^{\nu}_s\right)dsdr,
	\end{eqnarray*}
	which shows $u\left(t,x\right)=\int_0^t e^{-\lambda\left( t-s\right)}\mathbf{E}f\left(s, x+GZ_{t-s}^{\nu}\right)ds$ solves $\eqref{eeq1}$ in the integral sense. As a result of the dominated convergence theorem and Fubini's theorem, $u\in C^{\infty}_b\left( H_T\right)$. And by the equation, $u$ is continuously differentiable in $t$.
	
	\textsc{Uniqueness. }Suppose there are two solutions $u_1,u_2$ solving the equation, then $u:=u_1-u_2$ solves 
	\begin{eqnarray}\label{uni}
	\partial_t u\left( t,x\right)&=&L u\left(t,x\right)-\lambda u\left(t,x\right),\\
	u\left( 0,x\right)&=& 0.\nonumber
	\end{eqnarray}
	
	Apply the It\^{o} formula to $v\left( t-s,Z^{\nu}_s\right):=e^{-\lambda s}u\left(t-s,x+GZ^{\nu}_s\right)$, $0\leq s\leq t,$ over $\left[0,t\right]$ and take expectation for both sides of the resulting identity, then 
	\begin{eqnarray*}
		u\left( t,x\right)=-\mathbf{E}\int_0^t e^{-\lambda s}\left[ \left(-\partial_t u-\lambda u+Lu\right)\circ\left(t-s,x+GZ^{\nu}_{s-}\right)\right]ds=0.
	\end{eqnarray*}
	
	\textsc{Estimates for Smooth Inputs. } Denote $g\left(t,x\right)=f\left(t, Gx\right), x\in\mathbf{R}^d$. Then by Lemma \ref{lem1}, for any $\beta\in\left( 0,1\right)$,
	\begin{eqnarray*}
		\left\vert u\left( t,\cdot\right)\right\vert_{\beta,\infty}&\leq& \left\vert \int_0^t e^{-\lambda\left( t-s\right)}\mathbf{E}f\left(s, G\cdot+GZ_{t-s}^{\nu}\right)ds\right\vert_{\beta,\infty}\\
		&=& \left\vert \int_0^t e^{-\lambda\left( t-s\right)}\mathbf{E}g\left(s, \cdot+Z_{t-s}^{\nu}\right)ds\right\vert_{\beta,\infty}\\
		&\leq& C\left( \lambda^{-1}\wedge T\right) \left\vert g\left( t,\cdot\right)\right\vert_{\beta,\infty}\\
		&\leq& C\left( \lambda^{-1}\wedge T\right) \left\vert f\left( t,\cdot\right)\right\vert_{\beta,\infty}.
	\end{eqnarray*}
	Similarly, we can prove $\eqref{est3},\eqref{est4}$.

\textsc{Estimates for H\"{o}lder Inputs. }This part of proof is identical to section 5 of \cite{mf}.
\end{proof}

\subsection{Proof of Theorem 1.1}
We aim at providing a unifying proof for both $\mathcal{L}=\mathcal{A}+\mathcal{Q}$ and $\mathcal{L}=\mathcal{G}+\mathcal{Q}$. Before that, we claim

\begin{lemma}\label{beta}
	Let $\beta\in\left( 0,1/\alpha\right)$ and $f,g\in\tilde{C}^{\beta}\left( \mathbf{R}^d\right)$. Then 
	\begin{eqnarray*}
		\left\vert fg\right\vert_{\beta} &\leq& \left\vert f\right\vert_{0}\left\vert g\right\vert_{0}+\left\vert f\right\vert_{0}\left[g\right]_{\beta}+\left\vert g\right\vert_{0}\left[f\right]_{\beta},\\
		\left\vert f\right\vert_{0} &=&\sup_{z}\left\vert \eta_{m,z}f\right\vert_{0},\quad\forall k\in\mathbf{N}_{+},
	\end{eqnarray*}
	and for some positive constant $C$ that does not depend on $m$,
	\begin{eqnarray*}
		\left\vert f\right\vert_{\beta}&\leq& C l\left( m\right)^{\beta}\left\vert f\right\vert_{0}+\sup_{z}\left\vert \eta_{m,z}f\right\vert_{\beta},\\
		\sup_{z}\left\vert \eta_{m,z}f\right\vert_{\beta}&\leq&C l\left( m\right)^{\beta}\left\vert f\right\vert_0+\left\vert f\right\vert_{\beta}.
	\end{eqnarray*}
\end{lemma}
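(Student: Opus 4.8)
The plan is to establish the four claims in the order stated, since each is essentially elementary once the generalized Hölder norm is unwound, and later inequalities reuse the earlier ones. For the product estimate $|fg|_\beta \le |f|_0|g|_0 + |f|_0[g]_\beta + |g|_0[f]_\beta$, I would write the increment $fg(x+h) - fg(x) = f(x+h)\bigl(g(x+h)-g(x)\bigr) + g(x)\bigl(f(x+h)-f(x)\bigr)$, bound $|f(x+h)| \le |f|_0$ and $|g(x)| \le |g|_0$, and divide by $w(|h|)^\beta$; the sup-part is immediate from $|fg|_0 \le |f|_0|g|_0$.

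For the identity $|f|_0 = \sup_z |\eta_{m,z} f|_0$ for all $m \in \mathbf{N}_+$: the inequality $\le$ follows because $\eta_{m,z} \equiv 1$ on $\overline{B_{1/m}(z)}$ (using $\eta_{m,z}(x) = \eta(m(x-z))$ and $\eta \equiv 1$ on $\overline{B_1(0)}$), so taking $z = x$ recovers $|f(x)| = |\eta_{m,x}f(x)| \le \sup_z|\eta_{m,z}f|_0$; the reverse inequality $\ge$ is clear since $0 \le \eta_{m,z} \le 1$. (I would note the claim as written says "$\forall k \in \mathbf{N}_+$'' — I'll read $k$ as $m$, a harmless typo.)

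For the two-sided comparison between $[f]_\beta$ (equivalently $|f|_\beta$) and $\sup_z|\eta_{m,z}f|_\beta$, the upper bound $\sup_z |\eta_{m,z}f|_\beta \le C l(m)^\beta |f|_0 + |f|_\beta$ comes from the product estimate applied to $f$ and $\eta_{m,z}$: we get $|\eta_{m,z}f|_\beta \le |\eta_{m,z}|_0|f|_0 + |\eta_{m,z}|_0[f]_\beta + |f|_0[\eta_{m,z}]_\beta \le |f|_0 + [f]_\beta + |f|_0[\eta_{m,z}]_\beta$, and the point is that $[\eta_{m,z}]_\beta \le C l(m)^\beta$ uniformly in $z$. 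Indeed $\eta_{m,z}(x+h) - \eta_{m,z}(x) = \eta(m(x-z)+mh) - \eta(m(x-z))$, which is bounded by $\|\nabla\eta\|_0 \, m|h|$ and also by $2\|\eta\|_0$; using $w(|h|)^\beta$ in the denominator together with the scaling inequality $w(\varepsilon r) \le l(\varepsilon)w(r)$ — take $\varepsilon = m$, $r = |h|$ so that $w(m|h|) \le l(m)w(|h|)$ hence $w(|h|)^{-\beta} \le l(m)^\beta w(m|h|)^{-\beta}$ — one reduces to showing $[\eta]_\beta < \infty$ for the fixed bump $\eta$, which is clear since $\eta \in C_0^\infty$ and $w(r)^{-\beta}$ is integrable-type behaved near $0$ by $\mathbf{A(w,l)}$. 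The opposite bound $|f|_\beta \le C l(m)^\beta|f|_0 + \sup_z|\eta_{m,z}f|_\beta$ is the localization direction: given $x_1, x_2$ with $a := |x_1-x_2|$, if $a \le 1/m$ then both points lie in a ball where some $\eta_{m,z}$ is identically $1$ (choose $z = x_1$, say, noting $\eta_{m,x_1} \equiv 1$ on $\overline{B_{1/m}(x_1)} \ni x_2$), so $f(x_1)-f(x_2) = (\eta_{m,x_1}f)(x_1) - (\eta_{m,x_1}f)(x_2)$ and the increment is controlled by $[\eta_{m,x_1}f]_\beta \le \sup_z|\eta_{m,z}f|_\beta$; if $a > 1/m$ then $w(a) \ge w(1/m)$ and $w(1/m)^{-\beta} \le l(m)^\beta$ (from $w(1) = 1$ and $w(1) = w(m \cdot m^{-1}) \le l(m)w(m^{-1})$), so $|f(x_1)-f(x_2)|/w(a)^\beta \le 2|f|_0 w(1/m)^{-\beta} \le 2 l(m)^\beta|f|_0$.

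The main obstacle — really the only non-bookkeeping point — is the uniform-in-$z$ bound $[\eta_{m,z}]_\beta \le C l(m)^\beta$ and the companion estimate $w(m^{-1})^{-\beta} \le l(m)^\beta$; both hinge on using the scaling inequality \eqref{scale} in the right direction and on the normalization $w(1) = 1$, and one must be slightly careful that $C$ depends only on $\eta$, $\beta$, $w$, $l$ and not on $m$. Everything else is the triangle inequality and the definition of $\eta_{m,z}$.
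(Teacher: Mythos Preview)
Your proposal is correct and follows essentially the same route as the paper: the product estimate via the telescoping increment, the sup-identity from $\eta_{m,z}(z)=1$, the bound $[\eta_{m,z}]_\beta\le C\,l(m)^\beta$ via the scaling relation, and the localization split into $|x_1-x_2|\le 1/m$ versus $|x_1-x_2|>1/m$ are exactly what the paper does.

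One small correction: in the case $a=|x_1-x_2|>1/m$ you assert $w(a)\ge w(1/m)$, but the scaling function $w$ is \emph{not} assumed to be nondecreasing (only continuous with the right limits and the scaling bound \eqref{scale}). The conclusion you want, $w(a)^{-\beta}\le l(m)^\beta$, still holds, but one should derive it directly: from $1=w(1)=w\bigl((1/a)\cdot a\bigr)\le l(1/a)\,w(a)$ one gets $w(a)^{-1}\le l(1/a)$, and since $1/a<m$ and $l$ is nondecreasing, $l(1/a)\le l(m)$. This is precisely the route the paper takes (writing the far-apart contribution as $C\sup_{|x-y|>1/m} l(1/|x-y|)^\beta|f|_0\le C\,l(m)^\beta|f|_0$), so with this one-line fix your argument and the paper's coincide.
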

\begin{proof}
	Proof for the first two is identical to that for the standard H\"{o}lder norm. By monotonicity of the scaling factor,
	\begin{eqnarray*}
		\left\vert f\right\vert_{\beta}&\leq& C\sup_{\left\vert x-y\right\vert>\frac{1}{m}} l\left( \frac{1}{\left\vert x-y\right\vert}\right)^{\beta}\left\vert f\right\vert_{0}+\sup_{\left\vert x-y\right\vert\leq\frac{1}{m}}\frac{\left\vert f\left( x\right)-f\left(y\right)\right\vert}{w\left(\left\vert x-y\right\vert\right)^{\beta}}\\
		&\leq& C l\left( m\right)^{\beta}\left\vert f\right\vert_{0}+\sup_{z} \frac{\left\vert\eta_{m,z}\left( x\right) f\left( x\right)-\eta_{m,z}\left( y\right)f\left(y\right)\right\vert}{w\left(\left\vert x-y\right\vert\right)^{\beta}}\\
		&\leq& C l\left( m\right)^{\beta}\left\vert f\right\vert_{0}+\sup_{z}\left\vert \eta_{m,z}f\right\vert_{\beta}.
	\end{eqnarray*} 
	Meanwhile,
	\begin{eqnarray}
	\sup_{z}\left\vert \eta_{m,z}f\right\vert_{\beta}&\leq& \left\vert f\right\vert_{0}\left\vert \eta\right\vert_{0}+\left\vert f\right\vert_{0}\sup_{z}\left[\eta_{m,z}\right]_{\beta}+\left\vert \eta\right\vert_{0}\left[f\right]_{\beta}\nonumber\\
	&\leq& Cl\left(m\right)^{\beta}\left\vert f\right\vert_{0}+\left\vert f\right\vert_{\beta}.\label{est}
	\end{eqnarray} 
\end{proof}

Without introducing much confusion, we will use $L_z$ to represent $\eqref{opp3}$ and $\eqref{opp1}$ at the same time, and $\langle u,\eta_{m,z}\rangle_z$ for both $\eqref{opp4}$ and $\eqref{opp2}$. We will also use $\left\vert \cdot\right\vert_{\beta}$ and $\left\vert \cdot\right\vert_{\beta, \infty}$ interchangeably, which is justified by \textbf{$\mathbf{\tilde{A}}(w,l,\gamma)$}(ii). 

\textsc{Estimates and Uniqueness. }Let $u\in\tilde{C}^{1+\beta}_{\infty,\infty}\left(H_T\right)$ be a solution to $\eqref{eq3}$, either $\mathcal{L}=\mathcal{A}+\mathcal{Q}$ or $\mathcal{L}=\mathcal{G}+\mathcal{Q}$. Obviously,
\begin{eqnarray*}
	\partial_t \left( \eta_{m,z} u\right)
	&=& \eta_{m,z}\left( L_z u\right)-\lambda \left(\eta_{m,z} u\right)+\eta_{m,z}f+\eta_{m,z}\left[ \left( \mathcal{L}-L_z \right)u\right],
\end{eqnarray*}
where by elementary derivation,
\begin{eqnarray}\label{same}
\eta_{m,z}\left( L_z u\right)=L_z\left(\eta_{m,z} u\right)-u\left(L_z\eta_{m,z} \right)-\langle u,\eta_{m,z}\rangle_z,
\end{eqnarray}
therefore, $\eta_{m,z} u$ solves
\begin{eqnarray}
\partial_t \left( \eta_{m,z} u\right)&=&L_z\left(\eta_{m,z} u\right)-\lambda \left(\eta_{m,z} u\right)-u\left(L_z\eta_{m,z} \right)+\eta_{m,z}f\nonumber\\
&& +\eta_{m,z}\left[ \left( \mathcal{L}-L_z \right)u\right]-\langle u,\eta_{m,z}\rangle_z.\label{g7}
\end{eqnarray}

As an application of Theorems \ref{thm2} and \ref{thm1}, we have 
\begin{eqnarray}
\left\vert  \eta_{m,z} u\right\vert_{1+\beta,\infty}&\leq& C\big( \left\vert u\left(L_z\eta_{m,z} \right)\right\vert_{\beta,\infty}+\left\vert\eta_{m,z}f\right\vert_{\beta,\infty}\label{g1}\\
&& +\left\vert\eta_{m,z}\left[ \left( \mathcal{L}-L_z \right)u\right]\right\vert_{\beta,\infty}+\left\vert \langle u,\eta_{m,z}\rangle_z\right\vert_{\beta,\infty}\big),\nonumber
\end{eqnarray}
\begin{eqnarray*}
	\left\vert \eta_{m,z} u\right\vert_{\beta,\infty}&\leq& C\left( \lambda^{-1}\wedge T\right)\big( \left\vert u\left(L_z\eta_{m,z} \right)\right\vert_{\beta,\infty}+\left\vert\eta_{m,z}f\right\vert_{\beta,\infty}\\
	&& +\left\vert\eta_{m,z}\left[ \left( \mathcal{L}-L_z \right)u\right]\right\vert_{\beta,\infty}+\left\vert \langle u,\eta_{m,z}\rangle_z\right\vert_{\beta,\infty}\big)
\end{eqnarray*}
for some $C$ independent of $\lambda$. Clearly, $\left\vert L_z\eta_{m,z} \right\vert_{\beta,\infty}\leq C\left\vert \eta_{m,z} \right\vert_{1+\beta,\infty}
\leq C l\left(m\right)^{1+\beta}$. Then by Lemma \ref{beta} and \cite[Proposition 4]{mf},
\begin{eqnarray*}
	&&\left\vert u\left(L_z\eta_{m,z} \right)\right\vert_{\beta,\infty}\leq C \left\vert u\left(L_z\eta_{m,z} \right)\right\vert_{\beta}\\
	&\leq& \left\vert u\right\vert_{\beta}\left\vert L_z\eta_{m,z} \right\vert_{0}+\left\vert u\right\vert_{0}\left\vert L_z\eta_{m,z} \right\vert_{\beta}\\
	&\leq& Cl\left(m\right)^{1+\beta}\left\vert u\right\vert_{\beta,\infty}\leq Cl\left(m\right)^{1+\beta}\left( \varepsilon \left\vert u\right\vert_{1+\beta}+C_{\varepsilon}\left\vert u\right\vert_{0}\right).
\end{eqnarray*}
Apply Lemma \ref{beta} again.
\begin{eqnarray*}
	&&\left\vert\eta_{m,z}\left[ \left( \mathcal{L}-L_z \right)u\right]\right\vert_{\beta,\infty}\\
	&\leq& C\left(l\left(m\right)^{\beta}\left\vert \left( \widetilde{\mathcal{L}}-L_z \right)u\right\vert_{0}+l\left(m\right)^{\beta}\left\vert  \mathcal{Q}u\right\vert_{0}+\left\vert \left( \widetilde{\mathcal{L}}-L_z \right)u\right\vert_{\beta,\infty}+\left\vert \mathcal{Q}u\right\vert_{\beta,\infty}\right),
\end{eqnarray*}
where $\widetilde{\mathcal{L}}$ is either $\mathcal{A}$ or $\mathcal{G}$. Then by Lemmas \ref{cont2}, \ref{prr1}, \ref{cc} and Corollaries \ref{coo},\ref{col},
\begin{eqnarray*}
	&&\left\vert\eta_{m,z}\left[ \left( \mathcal{L}-L_z \right)u\right]\right\vert_{\beta,\infty}\\
	&\leq& Cl\left(m\right)^{\beta}\left( \varepsilon \left\vert u\right\vert_{1+\beta,\infty}+C_{\varepsilon}\left\vert u\right\vert_{0}\right)+CF\left(m,x,z\right)\left\vert u \right\vert_{1+\beta,\infty},
\end{eqnarray*}
where 
\begin{eqnarray*}
F\left(m,x,z\right):=\sup_{t,y,\left\vert x-z\right\vert\leq 2/m}\left\vert \rho\left(t,x,y\right)-\rho\left(t,z,y\right)\right\vert_0	
\end{eqnarray*}
if $\mathcal{L}=\mathcal{A}+\mathcal{Q}$, and $F\left(m,x,z\right):=\sup_{\left\vert x-z\right\vert\leq 2/m}\left\Vert G\left( x\right)-G\left( z\right)\right\Vert^{\sigma}$ if $\mathcal{L}=\mathcal{G}+\mathcal{Q}$.

Combining Lemmas \ref{aa}, \ref{aa2}, we obtain
\begin{eqnarray}
&&\left\vert u\left(L_z\eta_{m,z} \right)\right\vert_{\beta,\infty}+\left\vert\eta_{m,z}f\right\vert_{\beta,\infty}\notag\\
&&+\left\vert\eta_{m,z}\left[ \left( \mathcal{L}-L_z \right)u\right]\right\vert_{\beta,\infty}+\left\vert \langle u,\eta_{m,z}\rangle_z\right\vert_{\beta,\infty}\nonumber\\
&\leq& l\left(m\right)^{1+\beta}\left( \varepsilon \left\vert u\right\vert_{1+\beta,\infty}+C_{\varepsilon}\left\vert u\right\vert_{0}\right)+C\left(l\left(m\right)^{\beta}\left\vert f\right\vert_{0}+\left\vert f\right\vert_{\beta,\infty}\right)\nonumber\\
&&+CF\left(m,x,z\right)\left\vert u \right\vert_{1+\beta,\infty}.\label{g2}
\end{eqnarray}

An immediate conclusion of this estimate is 
\begin{eqnarray}
\left\vert \eta_{m,z} u\right\vert_{\beta,\infty}&\leq& C\left( \lambda^{-1}\wedge T\right)\left(F\left(m,x,z\right)\left\vert u\right\vert_{1+\beta,\infty}+l\left(m\right)^{\beta}\left\vert f\right\vert_{0}+\left\vert f\right\vert_{\beta,\infty}\right)\nonumber\\
&&+C\left( \lambda^{-1}\wedge T\right) l\left(m\right)^{1+\beta}\left( \varepsilon \left\vert u\right\vert_{1+\beta,\infty}+C_{\varepsilon}\left\vert u\right\vert_{0}\right),\label{g4}
\end{eqnarray}
where $C$ does not depend on $\lambda,m$. Thus,
\begin{eqnarray}
&&\left\vert  u\right\vert_{0}\leq C\sup_z \left\vert \eta_{m,z}u\right\vert_{\beta}\notag\\
&\leq& C\left( \lambda^{-1}\wedge T\right) l\left(m\right)^{1+\beta}\left(\left\vert u\right\vert_{1+\beta,\infty}+\left\vert f\right\vert_{\beta,\infty}\right),\label{g3}
\end{eqnarray}
Combining $\eqref{g1},\eqref{g2},\eqref{g3}$, we then have
\begin{eqnarray}
&&\left\vert  \eta_{m,z} u\right\vert_{1+\beta,\infty}\nonumber\\
&\leq& \varepsilon l\left(m\right)^{1+\beta}\left\vert u\right\vert_{1+\beta,\infty}+C_{\varepsilon}\left( \lambda^{-1}\wedge T\right) l\left(m\right)^{2+2\beta}\left(\left\vert u\right\vert_{1+\beta,\infty}+\left\vert f\right\vert_{\beta,\infty}\right)\nonumber\\
&&+Cl\left(m\right)^{\beta}\left\vert f\right\vert_{\beta,\infty}+CF\left(m,x,z\right)\left\vert u \right\vert_{1+\beta,\infty}.\label{g5}
\end{eqnarray}

On the other hand, by Lemma \ref{beta},
\begin{eqnarray*}
	\left\vert L^{\mu}u\right\vert_{\beta,\infty}&\leq& C\left\vert L^{\mu}u\right\vert_{\beta}\leq C l\left( m\right)^{\beta}\left\vert  L^{\mu}u\right\vert_{0}+C\sup_{z}\left\vert \eta_{m,z} L^{\mu}u\right\vert_{\beta}\\
	&\leq& C l\left( m\right)^{\beta}\left\vert  L^{\mu}u\right\vert_{0}+C\sup_{z}\left\vert \eta_{m,z} L^{\mu}u\right\vert_{\beta,\infty}.
\end{eqnarray*}
Let $\rho\left( z,y\right)=1,\nu\left( dy\right)=\mu\left( dy\right)$ in Lemma \ref{cont2} and utilize $\eqref{same}$.
\begin{eqnarray}
&&\sup_{z}\left\vert \eta_{m,z} L^{\mu}u\right\vert_{\beta,\infty}\nonumber\\
&\leq& \sup_{z}\left\vert \eta_{m,z} u\right\vert_{1+\beta,\infty}+\sup_{z}\left\vert u\left(L^{\mu}\eta_{m,z} \right)\right\vert_{\beta,\infty}+\sup_{z}\left\vert \langle u,\eta_{m,z}\rangle_z\right\vert_{\beta,\infty}\nonumber\\
&\leq& \sup_{z}\left\vert \eta_{m,z} u\right\vert_{1+\beta,\infty}+C l\left(m\right)^{1+\beta}\left( \varepsilon \left\vert u\right\vert_{1+\beta,\infty}+C_{\varepsilon}\left\vert u\right\vert_{0}\right),\label{g8}
\end{eqnarray}
where $C$ does not depend on $\lambda,m$. Combining $\eqref{g3},\eqref{g5}$, we obtain
\begin{eqnarray*}
	&&\left\vert u\right\vert_{1+\beta,\infty}\leq C\left(\left\vert u\right\vert_{0}+\left\vert L^{\mu}u\right\vert_{\beta,\infty}\right)\\
	&\leq& C\left( \lambda^{-1}\wedge T\right) l\left(m\right)^{1+\beta}\left(\left\vert u\right\vert_{1+\beta,\infty}+\left\vert f\right\vert_{\beta,\infty}\right)+C\sup_{z}\left\vert \eta_{m,z} u\right\vert_{1+\beta,\infty}\\
	&&+ C l\left(m\right)^{1+\beta}\left( \varepsilon \left\vert u\right\vert_{1+\beta,\infty}+C_{\varepsilon}\left\vert u\right\vert_{0}\right)\\
	&\leq& \varepsilon l\left(m\right)^{1+\beta}\left\vert u\right\vert_{1+\beta,\infty}+C_{\varepsilon}\left( \lambda^{-1}\wedge T\right) l\left(m\right)^{2+2\beta}\left(\left\vert u\right\vert_{1+\beta,\infty}+\left\vert f\right\vert_{\beta,\infty}\right)\nonumber\\
	&&+C_{\varepsilon}l\left(m\right)^{\beta}\left\vert f\right\vert_{\beta,\infty}+CF\left(m,x,z\right)\left\vert u \right\vert_{1+\beta,\infty}.
\end{eqnarray*}

In the inequality above, we first set $m$ sufficiently large so that 
\begin{equation*}
CF\left(m,x,z\right)\left\vert u \right\vert_{1+\beta,\infty}\leq \frac{1}{4}\left\vert u \right\vert_{1+\beta,\infty}.
\end{equation*}
For such an $m$, we then select $\varepsilon$ such that $\varepsilon l\left(m\right)^{1+\beta}<1/4$. At last, we choose $\lambda$ large enough so that for such $m,\varepsilon$, $C_{\varepsilon}\left( \lambda^{-1}\wedge T\right) l\left(m\right)^{2+2\beta}<1/4$. As a summary, with appropriate choice of $m,\varepsilon,\lambda$, $\left\vert u\right\vert_{1+\beta,\infty}\leq C\left(\lambda\right)\left\vert f\right\vert_{\beta,\infty}$. 

We need $\lambda$ to be sufficiently large though, say $\lambda\geq \lambda_0$. To completely relax this constraint, let us consider $v\left( t,x\right):=e^{\left( \lambda-\lambda_0\right)t}u\left( t,x\right),\lambda>0$, where $u$ solves $\eqref{eq3}$. Then $v$ is a solution to 
\begin{eqnarray*}
	\partial_t v\left( t,x\right)&=&\mathcal{L}v\left( t,x\right)-\lambda_0 v\left( t,x\right)+e^{\left( \lambda-\lambda_0\right)t}f\left( t,x\right), \lambda\geq 0,\\
	v\left( 0,x\right)&=& 0,\quad\left( t,x\right)\in H_T,\nonumber
\end{eqnarray*}
and 
\begin{eqnarray*}
	\left\vert v\right\vert_{1+\beta,\infty}= \left\vert e^{\left( \lambda-\lambda_0\right)t}u\right\vert_{1+\beta,\infty}\leq C_{\lambda_0}\left\vert e^{\left( \lambda-\lambda_0\right)t}f\right\vert_{\beta,\infty}.	
\end{eqnarray*}
Namely, $\left\vert u\right\vert_{1+\beta,\infty}\leq C_{\lambda_0}\left\vert f\right\vert_{\beta,\infty}$. Note $C_{\lambda_0}$ is uniform with respect to $\lambda$. 

Now we can conclude from $\eqref{g4}$, $\eqref{g3}$ and Lemma \ref{beta} that
\begin{eqnarray*}
	\left\vert u\right\vert_{\beta,\infty}&\leq& C l\left( m\right)^{\beta}\left\vert u\right\vert_{0}+C\sup_{z}\left\vert \eta_{m,z}u\right\vert_{\beta,\infty}\leq C\left( \lambda^{-1}\wedge T\right) \left\vert f\right\vert_{\beta,\infty},
\end{eqnarray*}
where $C$ does not depend on $\lambda,u,f$.

Again, according to Theorems \ref{thm2}, \ref{thm1} and $\eqref{g2},\eqref{g7}$, there is a constant $C$ depending on $\kappa,\beta,d,T,\mu,\nu$ such that for all $0\leq s<t\leq T$, $\kappa\in\left[ 0,1\right]$,
\begin{eqnarray*}
	&&\left\vert  \eta_{m,z} u\left( t,\cdot\right)-\eta_{m,z} u\left( s,\cdot\right)\right\vert_{1+\beta,\infty}\\
	&\leq& C\left( t-s\right)^{1-\kappa}\big( \left\vert u\left(L_z\eta_{m,z} \right)\right\vert_{\beta,\infty}+\left\vert\eta_{m,z}f\right\vert_{\beta,\infty}\\
	&&+\left\vert\eta_{m,z}\left[ \left( \mathcal{L}-L_z \right)u\right]\right\vert_{\beta,\infty}+\left\vert \langle u,\eta_{m,z}\rangle_z\right\vert_{\beta,\infty}\big)\\
	&\leq& Cl\left( m\right)^{1+\beta}\left( t-s\right)^{1-\kappa}\left\vert f\right\vert_{\beta,\infty}.\nonumber
\end{eqnarray*}
Apply Lemma \ref{beta} and repeat derivation $\eqref{g8}$ for the difference function,
\begin{eqnarray*}
	&&\left\vert u\left( t,\cdot\right)-u\left( s,\cdot\right)\right\vert_{1+\beta,\infty}\\
	&\leq& C\left(\left\vert u\left( t,\cdot\right)-u\left( s,\cdot\right)\right\vert_{0}+\left\vert L^{\mu}u\left( t,\cdot\right)-L^{\mu}u\left( s,\cdot\right)\right\vert_{\beta,\infty}\right)\\
	&\leq& C\sup_z\left\vert  \eta_{m,z} u\left( t,\cdot\right)-\eta_{m,z} u\left( s,\cdot\right)\right\vert_{0}\\
	&&+C l\left(m\right)^{\beta}\sup_z \left\vert  \eta_{m,z}L^{\mu} u\left( t,\cdot\right)-\eta_{m,z}L^{\mu} u\left( s,\cdot\right)\right\vert_{\beta,\infty}\\
	&\leq& \left(C l\left(m\right)^{\beta}+C_{\varepsilon}l\left(m\right)^{1+2\beta}\right)\left\vert  \eta_{m,z} u\left( t,\cdot\right)-\eta_{m,z} u\left( s,\cdot\right)\right\vert_{1+\beta,\infty}\\
	&& +\varepsilon l\left(m\right)^{1+2\beta}\left\vert u\left( t,\cdot\right)-u\left( s,\cdot\right)\right\vert_{1+\beta,\infty}.
\end{eqnarray*}	
Choose $\varepsilon$ such that $\varepsilon l\left(m\right)^{1+2\beta}<1/2$. Then we arrive at 
\begin{eqnarray*}
	&&\left\vert u\left( t,\cdot\right)-u\left( s,\cdot\right)\right\vert_{1+\beta,\infty}\\
	&\leq& \left(C l\left(m\right)^{\beta}+C_{\varepsilon}l\left(m\right)^{1+2\beta}\right)\left\vert  \eta_{m,z} u\left( t,\cdot\right)-\eta_{m,z} u\left( s,\cdot\right)\right\vert_{1+\beta,\infty}\\
	&\leq& C\left( t-s\right)^{1-\kappa}\left\vert f\right\vert_{\beta,\infty}.
\end{eqnarray*}		

Uniqueness of the solution is a direct consequence of these estimates.

\qquad\\
\noindent\textsc{Existence. }Let $\mathcal{V}\left( H_T\right)$ be the linear space that for any $v\in \mathcal{V}\left( H_T\right)$, there exists a unique $f\in\tilde{C}^{\beta}_{\infty,\infty}\left( H_T\right)$ such that $v\left( t,x\right)=\int_0^t f\left( s,x\right)ds$. Equip $\mathcal{V}\left( H_T\right)$ with norm $\left\vert v\right\vert_{\mathcal{V}}:=\left\vert f\right\vert_{\beta,\infty}$. Let $\mathcal{U}\left( H_T\right)$ be the linear space that for any $u\in \mathcal{U}\left( H_T\right)$, there is $g\in\tilde{C}^{1+\beta}_{\infty,\infty}\left( H_T\right)$ such that $u\left( t,x\right)=\int_0^t g\left( s,x\right)ds$. Endow $\mathcal{U}\left( H_T\right)$ with norm $\left\vert u\right\vert_{\mathcal{U}}:=\left\vert u\right\vert_{1+\beta,\infty}$. Then $\mathcal{V}\left( H_T\right)$ is a normed linear space and $\mathcal{U}\left( H_T\right)$ is a Banach space. Define for $\theta\in\left[0,1\right]$,
\begin{eqnarray*}
	\mathcal{T}_{\theta}u\left( t,x\right)&=&\theta\left( u\left( t,x\right)-\int_0^t \left( \mathcal{L}u\left( s,x\right)-\lambda u\left( s,x\right)\right)ds\right)\\
	&+&\left( 1-\theta\right)\left( u\left( t,x\right)-\int_0^t \left( L^{\nu} u\left( s,x\right)-\lambda u\left( s,x\right)\right)ds\right)\\
	&:=& u\left( t,x\right)-\int_0^t \left[\mathcal{L}_{\theta}u\left( s,x\right)-\lambda u\left( s,x\right)\right]ds,
\end{eqnarray*}
where $\mathcal{L}_{\theta}=\theta\mathcal{L}+\left(1-\theta \right)L^{\nu}$. Take $u\in\mathcal{U}\left( H_T\right)$. Then $u\left( t,x\right):=\int_0^t g\left( s,x\right)ds$ for some $g\in\tilde{C}^{1+\beta}_{\infty,\infty}\left( H_T\right)$. Clearly, for any $\theta\in\left[0,1\right]$, $u$ solves
\begin{eqnarray*}
	u\left( t,x\right)&=&\int_0^t \lbrack \mathcal{L}_{\theta} u\left( s,x\right)-\lambda u\left( s,x\right)+\Big( g\left( s,x\right)-\mathcal{L}_{\theta}u\left( s,x\right)+\lambda u\left( s,x\right)\Big)\rbrack ds.
\end{eqnarray*}
Therefore,
\begin{eqnarray*}
	\mathcal{T}_{\theta}u\left( t,x\right)&=&\int_0^t \lbrack  g\left( s,x\right)-\mathcal{L}_{\theta}u\left( s,x\right)+\lambda u\left( s,x\right)\rbrack ds,
\end{eqnarray*}
where by Lemma \ref{cc}, Proposition \ref{ppr1} and Corollary \ref{coo},
\begin{eqnarray*}
	\left\vert \mathcal{T}_{\theta}u \right\vert_{\mathcal{V}}=\left\vert g-\mathcal{L}_{\theta}u+\lambda u \right\vert_{\beta,\infty}\leq C \left\vert  u \right\vert_{1+\beta,\infty}<\infty.
\end{eqnarray*}
Then, $\mathcal{T}_{\theta}\left[\mathcal{U}\left( H_T\right)\right]\subset \mathcal{V}\left( H_T\right)$. Meanwhile, by estimates we derived above, there is $C$ independent of $u, \theta$ such that 
\begin{eqnarray*}
	\left\vert u \right\vert_{\mathcal{U}}=\left\vert u \right\vert_{1+\beta,\infty}\leq C\left\vert g-\mathcal{L}_{\theta}u+\lambda u \right\vert_{\beta,\infty}\leq C \left\vert \mathcal{T}_{\theta}u \right\vert_{\mathcal{V}}.
\end{eqnarray*}

Theorem \ref{thm2} says $T_0$ maps $\mathcal{U}$ onto $\mathcal{V}$. By Theorem 5.2 in \cite{gt}, so does $T_1$.

\section{Appendix}

\begin{lemma}\cite[Lemma 2]{mf}\label{ess}
	Let $\nu$ be a L\'{e}vy measure and $w$ be the scaling function which $\nu$ satisfies \textbf{A(w,l)} for. Then,\\
	\noindent a) there are constants $C_1, C_2>0$ such that 
	\begin{eqnarray}
	C_1\varsigma\left( r\right)\leq w\left( r\right)^{-1}\leq C_2 \varsigma\left( r\right), \quad\forall r>0.
	\end{eqnarray}
	\noindent b) $\int_{\left\vert y\right\vert \leq 1}w\left( \left\vert y\right\vert
	\right) \nu\left( dy\right)=+\infty$.\\
	\noindent c) For any $\varepsilon>0$, $\int_{\left\vert y\right\vert \leq 1}w \left( \left\vert y\right\vert
	\right) ^{1+\varepsilon }\nu\left( dy\right) <\infty$.\\
	\noindent d) For any $\varepsilon >0$, $\int_{\left\vert y\right\vert \leq 1}\left\vert y\right\vert ^{\varepsilon
	}w\left( \left\vert y\right\vert \right) \nu\left( dy\right) <\infty$.
\end{lemma}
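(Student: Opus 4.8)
The plan is to establish part (a) first --- that $w(r)^{-1}$ is comparable to $\varsigma(r):=\nu(|y|>r)$ --- and then read off (b)--(d) from it, using a one-dimensional change of variables for (b),(c) and a dyadic decomposition for (d). The only computations I would rely on are the change-of-variables identities coming from $\int g(y)\,\tilde{\nu}_R(dy)=w(R)\int g(y/R)\,\nu(dy)$, namely $\tilde{\nu}_R(|y|>a)=w(R)\varsigma(aR)$ and $\int_{|y|\le1}|y|^2\,\tilde{\nu}_R(dy)=w(R)R^{-2}\int_{|y|\le R}|y|^2\,\nu(dy)$, together with the layer-cake identity $\int_{|y|\le R}|y|^2\,\nu(dy)=2R^2\int_0^1 s\varsigma(Rs)\,ds-R^2\varsigma(R)$.

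For (a), the lower bound on $w(r)^{-1}$ is immediate: since $\alpha_2\ge0$ in every case of \textbf{A(w,l)}(iii), $|y|^{\alpha_2}\ge1$ on $\{|y|>1\}$, so $w(R)\varsigma(R)=\tilde{\nu}_R(|y|>1)\le\int_{|y|>1}|y|^{\alpha_2}\,\tilde{\nu}_R(dy)\le N_0$, i.e. $\varsigma(r)\le N_0\,w(r)^{-1}$. For the upper bound I would play non-degeneracy against scalability: by \textbf{A(w,l)}(i), $\int_{|y|\le1}|y|^2\,\tilde{\nu}_R(dy)\ge\int_{|y|\le1}|y|^2\,\mu^0(dy)\ge c_1>0$ (positivity because $\int_{|y|\le1}|\xi\cdot y|^2\,\mu^0(dy)\ge c_1$ for $\xi\in S_{d-1}$), whence $\int_{|y|\le R}|y|^2\,\nu(dy)\ge c_1 R^2 w(R)^{-1}$; on the other hand the layer-cake identity and \textbf{A(w,l)}(iv) give $\int_{|y|\le R}|y|^2\,\nu(dy)\le 2C_0 R^2\varsigma(R)$. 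Comparing the two, $w(r)^{-1}\le(2C_0/c_1)\varsigma(r)$. As a by-product $\varsigma$ is finite on $(0,\infty)$ with $\varsigma(1)\in(0,\infty)$, and $\varsigma(0+)=+\infty$ since $\varsigma(r)\ge\frac{c_1}{2C_0}w(r)^{-1}\to\infty$; in particular $\nu$ is an infinite measure.

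For (b) and (c), part (a) gives $C_2^{-1}\varsigma(|y|)^{-1}\le w(|y|)\le C_1^{-1}\varsigma(|y|)^{-1}$ for suitable constants $C_1,C_2>0$. Since $\varsigma$ is continuous (\textbf{A(w,l)}(iv)) and nonincreasing, the image of $\nu|_{\{0<|y|\le1\}}$ under $y\mapsto\varsigma(|y|)$ is Lebesgue measure on $[\varsigma(1),\infty)$, so $\int_{0<|y|\le1}\varsigma(|y|)^{-p}\,\nu(dy)=\int_{\varsigma(1)}^{\infty}u^{-p}\,du$ for every $p>0$. For $p=1$ this diverges, and with $w(|y|)\ge C_2^{-1}\varsigma(|y|)^{-1}$ it yields (b); for $p=1+\varepsilon$ it equals $\varepsilon^{-1}\varsigma(1)^{-\varepsilon}<\infty$, and with $w(|y|)\le C_1^{-1}\varsigma(|y|)^{-1}$ it yields (c). For (d) I would split $\int_{|y|\le1}|y|^\varepsilon w(|y|)\,\nu(dy)=\sum_{j\ge0}\int_{2^{-j-1}<|y|\le2^{-j}}|y|^\varepsilon w(|y|)\,\nu(dy)$; on the $j$-th shell $|y|^\varepsilon\le2^{-j\varepsilon}$ and $w(|y|)\le l(2)w(2^{-j-1})$ by $w(\varepsilon' r)\le l(\varepsilon')w(r)$, while the shell has $\nu$-mass at most $\varsigma(2^{-j-1})\le N_0 w(2^{-j-1})^{-1}$ by (a), so the $j$-th term is $\le N_0\,l(2)\,2^{-j\varepsilon}$ and the geometric series converges.

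The step I expect to be the main obstacle is the upper bound $w(r)^{-1}\le C_2\varsigma(r)$ in (a): it is the one place where \textbf{A(w,l)}(i) and \textbf{A(w,l)}(iv) must be used jointly --- (i) to force $\int_{|y|\le R}|y|^2\,\nu(dy)$ to be at least of order $R^2 w(R)^{-1}$, and (iv) to bound it from above by order $R^2\varsigma(R)$. Once (a) is in place the remaining parts are routine bookkeeping, the only delicate point being the observation (itself a consequence of (a)) that $\varsigma(0+)=\infty$ while $\varsigma(1)$ stays finite and positive --- exactly what makes the $p=1$ integral diverge and the $p=1+\varepsilon$ one converge.
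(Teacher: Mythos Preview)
The paper does not prove this lemma; it is stated in the Appendix with the citation \cite[Lemma 2]{mf} and no argument is given. So there is no ``paper's own proof'' to compare against here.

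Your argument is correct and self-contained. A few remarks confirming the steps you flagged: for the upper bound in (a), the positivity $\int_{|y|\le 1}|y|^2\,\mu^0(dy)>0$ indeed follows from the non-degeneracy clause in \textbf{A(w,l)}(i) since $|y|^2\ge|\xi\cdot y|^2$ for any unit $\xi$; the layer-cake identity $\int_{|y|\le R}|y|^2\,\nu(dy)=2R^2\int_0^1 s\,\varsigma(Rs)\,ds-R^2\varsigma(R)$ is exactly $\int_{|y|\le R}|y|^2\,\nu(dy)=2\int_0^R s\,\nu(s<|y|\le R)\,ds$ after the substitution $s=Ru$, so \textbf{A(w,l)}(iv) applies directly. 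For (b)--(c), the pushforward computation is justified because $\varsigma$ is continuous with $\varsigma(0+)=\infty$ and $\varsigma(1)\in(0,\infty)$ (both consequences of (a)), so for $\varsigma(1)\le a<b$ one has $\nu(\{0<|y|\le 1:\varsigma(|y|)\in[a,b]\})=\varsigma(s_b)-\varsigma(s_a)=b-a$. For (d), the only thing to note is that $w$ need not be monotone, but your bound $w(|y|)\le l(2)\,w(2^{-j-1})$ on the $j$-th shell follows from the scaling inequality $w(\varepsilon r)\le l(\varepsilon)w(r)$ with $\varepsilon=|y|\cdot 2^{j+1}\le 2$ and the monotonicity of $l$.
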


\begin{lemma}\cite[Lemma 5]{cr}\label{lemma3}
	Let $\nu$ be a L\'{e}vy measure satisfying \textbf{A(w,l)}. $Z^{\tilde{\nu}_R}_t$ is the L\'{e}vy process associated to $\tilde{\nu}_R,R>0$. For each $t, R$, $Z^{\tilde{\nu}_R}_t$ has a bounded and continuous density function $p^R\left( t,x\right),t\in\left( 0,\infty\right), x\in\mathbf{R}^d$. And $p^R\left( t,x\right)$ has bounded and continuous derivatives up to order $4$. Meanwhile, for any multi-index $\left\vert\vartheta\right\vert\leq 4$, 
	\begin{eqnarray*}
		\int\left\vert \partial^{\vartheta}p^R\left( t,x\right)\right\vert dx &\leq&  C\gamma\left( t\right)^{-\left\vert\vartheta\right\vert},\\
		\sup_{x\in\mathbf{R}^d}\left\vert \partial^{\vartheta}p^R\left( t,x\right)\right\vert  &\leq&  C\gamma\left( t\right)^{-d-\left\vert\vartheta\right\vert},
	\end{eqnarray*}
	where $C>0$ is independent of $t,R$. For any $\beta\in\left(0,1\right)$ such that $\left\vert\vartheta\right\vert+\beta<4$,
	\begin{eqnarray*}
		\int\left\vert \partial^{\beta}\partial^{\vartheta}p^R\left( t,x\right)\right\vert dx &\leq&  C\gamma\left( t\right)^{-\left\vert\vartheta\right\vert-\beta}.
	\end{eqnarray*}
	For any $a>0$, there is a constant $C>0$ independent of $t,R$, so that 
	\begin{eqnarray*}
		\int_{\left\vert x\right\vert>a}\left\vert \partial^{\vartheta}p^R\left( t,x\right)\right\vert dx &\leq&  C\left( \gamma\left( t\right)^{2-\left\vert\vartheta\right\vert}+t\gamma\left( t\right)^{-\left\vert\vartheta\right\vert}\right). 
	\end{eqnarray*}
\end{lemma}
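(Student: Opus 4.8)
The plan is to realise $p^{R}(t,\cdot)$ as the Fourier inverse of the characteristic function of $Z^{\tilde\nu_R}_t$, $\mathbf{E}[e^{i2\pi\xi\cdot Z^{\tilde\nu_R}_t}]=e^{t\psi_R(\xi)}$ with $\psi_R:=\psi^{\tilde\nu_R}$, and to read every bound off that representation, the real issue being to keep all constants free of $R$. Two ingredients of \textbf{A(w,l)} make this possible, both uniform in $R$: the lower bound $\tilde\nu_R\ge\mu^0$, which supplies smoothing, and the scalability bound $\int_{|y|\le1}|y|^{\alpha_1}\tilde\nu_R(dy)+\int_{|y|>1}|y|^{\alpha_2}\tilde\nu_R(dy)\le N_0$, which supplies moments and tail control. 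I also use the exact self-similarity $\psi_R(\xi)=w(R)\psi^{\nu}(R^{-1}\xi)$ — valid for all $\alpha$, the symmetry hypothesis of \textbf{A(w,l)} handling $\alpha=1$ — giving $p^{R}(t,x)=R^{d}p^{1}(tw(R),Rx)$; together with the relations among $w,l,\gamma$ recorded in the hypotheses and in \cite{mf}, this is the mechanism by which the scale $\gamma(t)$ appears, and in particular it lets one first prove the estimates for $t\ge1$, where \eqref{mu0} directly gives the integrability needed, and then propagate them to all $t>0$. Splitting $\tilde\nu_R=\mu^0+(\tilde\nu_R-\mu^0)$ gives $\psi_R=\psi^{\mu^0}+\psi^{\tilde\nu_R-\mu^0}$ with $\Re\psi^{\tilde\nu_R-\mu^0}\le0$, hence $|e^{t\psi_R(\xi)}|\le e^{-t\zeta^0(\xi)}$; since \eqref{mu0} makes $(1+|\xi|)^{4}e^{-\zeta^0(\xi)}$ integrable, $(2\pi i\xi)^{\vartheta}e^{t\psi_R(\xi)}\in L^{1}(\mathbf{R}^{d})$ uniformly in $R$ for each $t\ge1$ and $|\vartheta|\le4$, so Fourier inversion yields a bounded continuous density $p^{R}(t,x)=\int e^{i2\pi x\cdot\xi}e^{t\psi_R(\xi)}\,d\xi\in C^{4}_{b}(\mathbf{R}^{d})$ with $\partial^{\vartheta}p^{R}(t,x)=\int(2\pi i\xi)^{\vartheta}e^{i2\pi x\cdot\xi}e^{t\psi_R(\xi)}\,d\xi$. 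Peeling off half of the non-degenerate part, $e^{t\psi_R}=e^{t\psi^{\mu^0/2}}\,e^{t(\psi_R-\psi^{\mu^0/2})}$, gives moreover the convolution identity $p^{R}(t,\cdot)=q(t,\cdot)\ast\varpi^{R}(t,\cdot)$, where $q=\mathcal{F}^{-1}[e^{t\psi^{\mu^0/2}}]$ is the smooth, rapidly decaying density of the fixed process $Z^{\mu^0/2}_t$ and $\varpi^{R}(t,\cdot)$ is the law of $Z^{\tilde\nu_R-\mu^0/2}_t$, a probability measure.

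For the pointwise bounds, $|e^{t\psi_R}|\le e^{-t\zeta^0}$ gives $\sup_x|\partial^{\vartheta}p^{R}(t,x)|\le\int(2\pi|\xi|)^{|\vartheta|}e^{-t\zeta^0(\xi)}\,d\xi$; combining the lower bound $-\Re\psi_R\ge\zeta^0$ with a matching upper bound on $-\Re\psi_R$ furnished uniformly in $R$ by the scalability assumption, and rescaling $\xi\mapsto\gamma(t)^{-1}\xi$ (equivalently, transferring through $p^{R}(t,x)=R^{d}p^{1}(tw(R),Rx)$ and invoking the doubling of $w,l,\gamma$), converts this into $\sup_x|\partial^{\vartheta}p^{R}(t,x)|\le C\gamma(t)^{-d-|\vartheta|}$ with $C$ independent of $t,R$. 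For the $L^{1}$ bounds the standard device of multiplying by $(1+|x|^{2})^{N}$ is unavailable because $\psi_R$ need not be smooth, so one starts from the factorization, $\partial^{\vartheta}p^{R}(t,\cdot)=\partial^{\vartheta}q(t,\cdot)\ast\varpi^{R}(t,\cdot)$, which already gives the $R$-free bound $\int|\partial^{\vartheta}p^{R}(t,x)|\,dx\le\int|\partial^{\vartheta}q(t,x)|\,dx$, and then sharpens the time rate to $C\gamma(t)^{-|\vartheta|}$ by accounting for the additional spreading carried by $\varpi^{R}$ — or, what amounts to the same thing, by using a $\gamma(t)$-adapted split of $\tilde\nu_R$ into small and large jumps together with the two-sided control of $-\Re\psi_R$. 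The fractional orders $|\vartheta|+\beta<4$ are handled identically: the fractional derivative lands on the smooth factor $q$, or, in the Fourier picture, on the increment $\partial^{\vartheta}p^{R}(t,x+h)-\partial^{\vartheta}p^{R}(t,x)$ through $|e^{i2\pi h\cdot\xi}-1|\le C(|h||\xi|)^{\beta}$ and the integrability of $(1+|\xi|)^{|\vartheta|+\beta}e^{-t\zeta^0(\xi)}$.

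For the estimate away from the origin, decompose $Z^{\tilde\nu_R}_t=S_t+L_t$ into the compensated part carrying jumps of size $\le1$ and the compound Poisson part carrying those of size $>1$; the uniform scalability bound gives $\mathrm{Var}(S_t)\le CtN_0$ (hence $\mathbf{P}(|S_t|>a/2)\le CtN_0 a^{-2}$) and jump rate $\tilde\nu_R(|y|>1)\le CN_0$ (hence $\mathbf{P}(L_t\ne0)\le CtN_0$), and conditioning on $\{L_t=0\}$ and applying the previous estimates to the remaining mass yields $\int_{|x|>a}|\partial^{\vartheta}p^{R}(t,x)|\,dx\le C(\gamma(t)^{2-|\vartheta|}+t\gamma(t)^{-|\vartheta|})$ uniformly in $R$. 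The main obstacle is precisely the sharpness and $R$-uniformity of the $L^{1}$ estimates: the non-smoothness of the symbol rules out reducing to weighted sup-norms, which forces the detour through the fixed non-degenerate measure $\mu^0$, and the delicate point is to verify that, once the heavy part $\varpi^{R}$ is reinstated, the bounds reproduce exactly the exponent $\gamma(t)^{-|\vartheta|}$ dictated by the scaling of $w,l,\gamma$ — simultaneously for every member of the self-similar family $\{\tilde\nu_R\}_{R>0}$.
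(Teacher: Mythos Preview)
The paper does not give its own proof of this lemma: it is stated in the Appendix purely as a citation of \cite[Lemma 5]{cr}, so there is nothing in the present paper to compare your argument against line by line.

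That said, your outline is in the right spirit and matches the strategy actually carried out in \cite{cr}: the decisive move is indeed the convolution factorisation $p^{R}(t,\cdot)=q(t,\cdot)\ast\varpi^{R}(t,\cdot)$ obtained by peeling off a fixed portion of the non-degenerate minorant $\mu^{0}$, which circumvents the non-smoothness of $\psi_R$ and supplies $R$-uniform constants; and the time scale $\gamma(t)$ enters exactly through the self-similarity $p^{R}(t,x)=R^{d}p(w(R)t,Rx)$ together with the relations among $w,l,\gamma$. Where your sketch is thinnest is precisely the point you flag yourself: the passage from the crude bound $\int|\partial^{\vartheta}p^{R}(t,x)|\,dx\le\int|\partial^{\vartheta}q(t,x)|\,dx$ to the sharp rate $C\gamma(t)^{-|\vartheta|}$ does not follow from ``accounting for the additional spreading carried by $\varpi^{R}$'' in any soft way --- in \cite{cr} one instead proves the sharp bound at a single time scale (say $t\asymp1$) using \eqref{mu0}, and then transfers it to all $t>0$ by the exact self-similarity, choosing $R$ so that $w(R)t\asymp1$, i.e.\ $R\asymp\gamma(t)$. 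Your rescaling remark ``$\xi\mapsto\gamma(t)^{-1}\xi$'' is the Fourier-side shadow of this, but as written it is not quite an argument: $\zeta^{0}$ is fixed and does not rescale, so the rescaling must be done at the level of the family $\{\tilde\nu_R\}$, not of $\mu^{0}$ alone. The tail estimate via the small/large jump split is correct in outline.
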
	

\section*{Acknowledgments} 
I would like to thank Prof. Remigijus Mikulevi\v{c}ius for useful discussions.


\begin{thebibliography}{9}
	\bibitem{bl} Bergn, J. and L\"{o}fstr\"{o}m, J., Interpolation Spaces. An Introduction, Springer Verlag, 1976.
	\bibitem{fw} Farkas, W., Jacob, N. and Schilling, R.L., Function spaces related to continuous negative
	definite functions: $\psi$-Bessel potential spaces, Dissertationes Mathematicae, 1-60, 2001.
	\bibitem{fw2} Farkas,W. and Leopold, H.G., Characterization of function spaces of generalized smoothness, Annali di Matematica, 185, 1-62, 2006.
	\bibitem{gt} Gilbarg, D. and Trudinger, N.S., Elliptic Partial Differential Equations of Second Order. Springer, New York, 1998.
	\bibitem{kg} Kalyabin, G.A., Description of functions in classes of Besov-Lizorkin-Triebel type, Trudy Mat. Inst. Steklov, 156, 160-173, 1980.
	\bibitem{kg2} Kalyabin, G.A. and Lizorkin, P.I., Spaces of functions of generalized smoothness, Math.
	Nachr., 133, 7-32, 1987.
	\bibitem{kim1} Kim, I. and Kim, K.-H., An $L^p$-theory for a class of non-local elliptic equations related to nonsymmetric
	measurable kernels, J. Math. Anal. Appl. 434, 1302-1335, 2016.
	\bibitem{kim2} Kim, P., Song, R. and Vondra\v{c}ek, Z., Global uniform boundary Harnack principle with explicit decay rate
	and its application, Stoch. Proc. Appl. 124, 235-267, 2014.
	\bibitem{cr} Mikulevi\v{c}ius, R. and Phonsom, C., On $L^p$ theory for parabolic and elliptic integro-differential
	equations with scalable operators in the whole space, Stoch PDE: Anal Comp, DOI 10.1007/s40072-017-0095-4, 2017.
	\bibitem{cr2} Mikulevi\v{c}ius, R. and Phonsom, C., On the Cauchy problem for integro-differential equations in the scale of spaces of generalized smoothness. arXiv: 1705.09256v1, 2017.
	\bibitem{rm} Mikulevi\v{c}ius, R., On the Cauchy problem for parabolic SPDEs in H\"{o}lder classes, Ann. Probab. 28, 74-108, 2000.
	\bibitem{mp3} Mikulevi\v{c}ius, R. and Pragarauskas, H., On the Cauchy problem for certain integro-differential operators in Sobolev and H\"{o}lder spaces, Lithuanian Math. J. 32, 238-264, 1992.
	\bibitem{mp} Mikulevi\v{c}ius, R. and Pragarauskas, H., On H\"{o}lder solutions of the integro-differential Zakai equation, Stoch. Process. Appl. 119, 3319-3355, 2009.
	\bibitem{mp2} Mikulevi\v{c}ius, R. and Pragarauskas, H., On the Cauchy problem for integro-differential operators in H\"{o}lder classes and the uniqueness of the Martingale problem, Potential Anal, DOI 10.1007/s11118-013-9359-4, 2013.
	\bibitem{mf} Mikulevi\v{c}ius, R. and Xu, F., On the Cauchy problem for parabolic integro-differential equations in generalized H\"{o}lder spaces, arXiv:1806.07019, 2018.
	\bibitem{br} Rozovskii, B.L., On stochastic partial differential equations, Mat. Sb. 96, 314-341, 1975.
	\bibitem{tr} Triebel, H., Interpolation Theory, Function Spaces, Differential Operators, North-Holland Pub. Co., 1978.
	\bibitem{zh} Zhang, X., $L^p$ maximal regularity of nonlocal parabolic equations and applications, Ann. I. H. Poincar\'{e} – AN 30, 573-614, 2013.
\end{thebibliography}
\end{document}